\newcommand{\Balpha}{\mbox{$\hspace{0.1em}\rule[0.01em]{0.05em}{0.39em}\hspace{-0.21em}\alpha$}}
\newtheorem{theorem}{Theorem}[section]
\newtheorem*{theorem*}{Theorem}
\newtheorem{corollary}[theorem]{Corollary}
\newtheorem{prop}[theorem]{Proposition}
\newtheorem{lemma}[theorem]{Lemma}
\theoremstyle{definition}
\newtheorem{defn}[theorem]{Definition}
\newtheorem{remark}[theorem]{Remark}
\newtheorem{example}[theorem]{Example}
\newtheorem{claim}{Claim}[section]
\newcommand{\hneck}{H_\mathrm{neck}}
\newcommand{\hthic}{H_\mathrm{th}}
\newcommand{\htrig}{H_\mathrm{trig}}
\newcommand{\surg}{\mathcal{M}_\mathbb{H}}
\newcommand{\surgi}{\mathcal{M}_{\mathbb{H}_i}}
\title[Approximation of mean curvature flow with generic singularities]{Approximation of mean curvature flow with generic singularities by smooth flows with surgery}
\author{J. M. Daniels-Holgate}
\address{Department of Mathematics, Zeeman Building, University of Warwick, Gibbet Hill Road, Coventry CV4 7AL,
UK}
\email{joshua.daniels-holgate@warwick.ac.uk} 
\date{October 2022}
\begin{document}
\maketitle

\begin{abstract}
    We construct smooth mean curvature flows with surgery that approximate weak mean curvature flows with only spherical and neck-pinch singularities. This is achieved by combining the recent work of Choi-Haslhofer-Hershkovits, and Choi-Haslhofer-Hershkovits-White, establishing canonical neighbourhoods of such singularities, with suitable barriers to flows with surgery. A limiting argument is then used to control these approximating flows. We conclude by improving the entropy bound on the low-entropy Schoenflies conjecture.
\end{abstract}

\section{Introduction}
 Mean curvature flow is the $L^2$-gradient flow for the area functional. In general, the flow from a hypersurface can develop singularities and there are multiple notions of weak flow that allow for the continuation of the flow past such singularities. An alternate approach is to approximate the flow by a piece-wise smooth flow, known as a mean curvature flow with surgery. The surgery procedure for mean curvature flow from a 2-convex hypersurface of dimension $n\geq3$ was introduced by Huisken--Sinestrari in \cite{hssurg}, and extended to $n=2$ by Huisken--Brendle \cite{HBsurg}. Independently, Haslhofer--Kleiner \cite{hkestimates, hksurg} established a surgery procedure that works for all dimensions $n\geq2$. By classifying blow ups for a more general class of 2-convex flows, they showed regions of high curvature in such flows have a canonical structure.

In both methodologies, existence of 2-convex surgery boils down to the classification of regions of high curvature that develop: a canonical neighbourhood theorem for 2-convex flow. Canonical neighbourhoods of neck-pinch singularities for unit-regular cyclic (mod 2) Brakke flows of dimension $n=2$ were established in \cite{chh18} and for $n\geq3$ in \cite{chhw19}, as a corollary to their resolution of the mean convex neighbourhood conjecture for neck-pinch singularities. It is from this result that we can extend the smooth mean curvature flow with surgery.

Spherical and generalised cylindrical singularities were conjectured by Huisken to be `generic', \cite[\#~8]{Trieste}. The pioneering work of Colding--Minicozzi, \cite{CM12,CM15, CM16}, catalyzed the study of generic flows through their introduction of the entropy functional and establishing of Łojasiewicz-type inequalities. Further, they showed spherical and generalised cylindrical singularities are the only linearly stable singularity models. The study of generic flows has recently been furthered by the work of Chodosh--Choi--Mantoulidis--Schulze, \cite{ccms20, ccms21}. They showed that hypersurfaces in $\mathbb{R}^4$ with entropy less than that of $\mathbb{S}^1\times\mathbb{R}^2$ can be perturbed such that the weak flow from this perturbed surface encounters only spherical and neck-pinch singularities. Such results provide a strong motivation for establishing a flow with surgery. Recall, a flow with surgery will have finitely many surgeries. This provides a simple way for topological information to be tracked. See Section 6, where we prove the low-entropy Schoenflies conjecture \cite[Conjecture 1.9]{ccms21} in such a manner. Indeed finiteness is desirable, as despite the groundbreaking results concerning the structure and size of the singular set, see
White \cite{bw97} and Colding--Minicozzi \cite{CM15}, it is still unknown if there are finitely many singular times, or if spherical singularities can accumulate to a neck-pinch singularity. See the work of B.Choi--Haslhofer--Hershkovits \cite{bchh}.

To highlight why existence of a surgical flow is non-trivial, consider a hypersurface, $M$, whose mean curvature flow has only spherical and neck-pinch singularities, and a single (non-degenerate) neck-pinch singularity at the first singular time. With the canonical neighbourhood theorems of \cite{chh18, chhw19} in mind, one can follow the arguments of \cite{hksurg} to pick surgery parameters suitable for surgical modifications to be made at some time before the flow becomes singular. Such a process would construct a new hypersurface $M'$. One immediately runs into a problem: without assuming global 2-convexity, we do not have any knowledge of how the flow from $M'$ will proceed. In the worst case, it may run into non-generic singularities. Moreover, the concatenation of these flows is no longer a weak flow, so passing to global limits along sequences of modified flows becomes impractical. To overcome these difficulties, we develop a technical framework that allows us to pass to limits locally. Further, we show the flows converge, in a smooth sense, to the original weak flow. This gains control of the flows with surgical modification, allowing for one to perform subsequent surgeries.

\subsection{Overview}
We adapt the definitions of \cite{hksurg} to construct a unit-regular Brakke flow with surgical modification. This gives one the freedom to localise the surgery.\footnote{Ultimately, one will use the maximum principle to show the existence arguments can be applied directly. There is no reason that the formalism of \cite{hssurg} and \cite{HBsurg} could not be used, however, the formalism of \cite{hksurg} makes it very clear what data one has to control on the boundary.}

Throughout this work, we will be considering an $n$-dimensional unit-regular, cyclic (mod 2) integral Brakke flow $\mathcal{M}$ that encounters only spherical or neck-pinch singularities (with multiplicity one), evolving from the smoothly embedded, closed hypersurface $M^n\subset\mathbb{R}^{n+1}$. We recall the definition of such singularities.
\begin{defn} A (multiplicity-one) singularity is said to be 
\begin{enumerate}[(a)]
    \item \textit{spherical} if it has the shrinking sphere $
    (-\infty , 0) \ni t \mapsto \mathbb{S}^{n}(\sqrt{-2nt})\times \mathbb{R}$ as a tangent flow
    \item a \textit{neck-pinch} if it has the shrinking cylinder $
    (-\infty , 0) \ni t \mapsto \mathbb{S}^{n-1}(\sqrt{-2(n-1)t})\times \mathbb{R}$ as a tangent flow.
\end{enumerate}
\end{defn} By the work of Hershkovits-White \cite{hershwhite}, and the resolution of the mean convex neighbourhood conjecture, a level set flow with only these singularities does not fatten. Moreover, these results, plus the recent work \cite{ccms21}, provide the tools required to prove a uniqueness theorem for weak mean curvature flows with only spherical and neck-pinch singularities. In Theorem \ref{bigunique}, we show that if the outer flow from a given hypersurface $M^n\subset\mathbb{R}^{n+1}$ encounters only spherical and neck-pinch singularities, then it is the unique, unit-regular, cyclic (mod 2), integral Brakke flow starting from $M$. For readers unfamiliar with such terminology, we refer to Section \ref{prelim}.

Our principal result concerns the existence of a smooth flow with surgery from a given hypersurface.
  
  The existence of a surgery flow is dependent on two parameters, $H_\mathrm{min}$ and $\Theta$. Recall, the parameters of surgery detailed in \cite{hksurg} are: $H_\mathrm{th}$, the scale at which components are dropped, $H_\mathrm{neck}$, the scale of the necks which we perform surgery on, and $H_\mathrm{trig}$, the trigger scale, at which we pause the flow and perform surgery. The parameter $\Theta$ governs the ratios between these quantities. We say $\mathbb{H}\geq \Theta$ if $H_\mathrm{trig}/H_\mathrm{neck}\geq \Theta$ and $H_\mathrm{neck}/H_\mathrm{th}\geq \Theta$. We also require $\hthic> H_\mathrm{min}$.
\begin{theorem}[Existence of a smooth flow with surgery]
Let $M^n\subset\mathbb{R}^{n+1}$ be a smoothly embedded hypersurface, and $\mathcal{M}$ be a unit-regular, cyclic mod 2 integral Brakke flow, emerging from $M$ with only spherical and neck-pinch singularities.
Then, the parameters $H_\mathrm{min}(M)<\infty$ and $\Theta(M)<\infty$ can be chosen (depending only on the initial hypersurface) such that every weak $(\Balpha,\delta,\mathbb{H})$-flow, $\mathcal{M}_\mathbb{H}$, with $H_\mathrm{th}>H_\mathrm{min}$, $\mathbb{H}>\Theta$ satisfies:
 \begin{itemize}
     \item $|H|\leq H_\mathrm{trig}<\infty$ everywhere, 
     \item $\mathcal{M}_\mathbb{H}$ vanishes in finite time.
 \end{itemize}
 i.e.~ $\mathcal{M}_\mathbb{H}$ is a smooth mean curvature flow with surgery.
 \end{theorem}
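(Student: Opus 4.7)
The plan is to establish both assertions via a compactness/contradiction argument that leverages the canonical neighbourhood theorems of \cite{chh18,chhw19}, the equidistant barrier construction of Lauer--Head \cite{lauer,head}, and the convergence result Proposition \ref{convergence2}. First I would refine the cover $\Omega_{(\alpha,\beta)}$ so that $\mathcal{M}$ is uniformly smooth with bounded curvature outside $\Omega_{(\alpha,\beta)}$, while inside $\Omega_{(\alpha,\beta)}$ every high-curvature region of $\mathcal{M}$ is, after suitable rescaling, arbitrarily close to either a shrinking sphere or a standard neck. With this fixed, I would choose $H_\mathrm{min}(M)$ so that the physical scale $1/H_\mathrm{min}$ is much smaller than the geometric size of these canonical neighbourhoods and the distance between $\Omega_{(\alpha,\beta)}$ and the uniformly smooth region; the scale-separation parameter $\Theta(M)$ is fixed independently so that the hierarchy $H_\mathrm{th}\ll H_\mathrm{neck}\ll H_\mathrm{trig}$ is preserved and every $H_\mathrm{neck}$-neck of any flow sufficiently $C^\infty_\mathrm{loc}$-close to $\mathcal{M}$ lies inside $\Omega_{(\alpha,\beta)}$.

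Given a weak $(\Balpha,\delta,\mathbb{H})$-flow $\mathcal{M}_\mathbb{H}$ with the assumed scale constraints, the outer equidistant Brakke flows from smooth hypersurfaces at signed distance $\pm\epsilon$ to $M$ enclose $\mathcal{M}$ and, since each surgery only removes enclosed volume at small necks, continue to enclose $\mathcal{M}_\mathbb{H}$. Consequently $\mathcal{M}_\mathbb{H}$ stays trapped in a small tubular neighbourhood of $\mathcal{M}$ for all time, from which the finite-time vanishing claim follows immediately: the outer barriers are $\epsilon$-perturbations of $\mathcal{M}$ and vanish in finite time, forcing $\mathcal{M}_\mathbb{H}$ to vanish no later.

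For the curvature bound I would argue by contradiction. Assume there is a sequence $\mathcal{M}_{\mathbb{H}_i}$ with $H^{(i)}_\mathrm{th}\to\infty$ and $\Theta^{(i)}\to\infty$, and spacetime points $(x_i,t_i)$ at which the trigger curvature $H^{(i)}_\mathrm{trig}$ is attained without a valid surgery being admissible. The barrier enclosure forces $(x_i,t_i)$ to sub-converge to a singular point of $\mathcal{M}$ lying in $\Omega_{(\alpha,\beta)}$. Proposition \ref{convergence2} applied to $\mathcal{M}_{\mathbb{H}_i}$ yields Brakke convergence to a unit-regular, cyclic mod $2$ limit emerging from $M$, which by the uniqueness theorem stated above coincides with $\mathcal{M}$. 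Parabolic rescaling of $\mathcal{M}_{\mathbb{H}_i}$ at $(x_i,t_i)$ then produces, in the limit, a tangent flow of $\mathcal{M}$, which must be a shrinking sphere or round cylinder. Either outcome, combined with the canonical neighbourhood theorem applied at slightly earlier times, supplies a clean $H_\mathrm{neck}$-neck at which surgery is performable, contradicting the choice of $(x_i,t_i)$ for all sufficiently large $i$.

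The main obstacle will be coordinating the two compactness procedures in the third paragraph. The macroscopic Brakke limit identifies the limit flow with $\mathcal{M}$ via Proposition \ref{convergence2} (itself dependent on the connectedness of the regular set from \cite{ccms20}), while the microscopic parabolic rescalings at $(x_i,t_i)$ must converge smoothly on compact subsets of the regular set to a recognisable singularity model despite the presence of surgery necks. Ensuring that the surgery modifications of $\mathcal{M}_{\mathbb{H}_i}$ do not disrupt either level of convergence, and that the canonical neighbourhood structure of $\mathcal{M}$ transfers quantitatively to $\mathcal{M}_{\mathbb{H}_i}$ for large $i$, is the technical heart of the argument.
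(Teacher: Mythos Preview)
Your high-level outline captures the correct ingredients (the $\varepsilon$-barriers, Proposition~\ref{convergence2}, and a limiting argument at high-curvature points), but there is a structural gap that prevents the argument from closing as written. The contradiction step hinges on transferring the canonical neighbourhood structure of $\mathcal{M}$ to the approximating surgery flows $\mathcal{M}_{\mathbb{H}_i}$ near a singular point. You propose to do this by parabolically rescaling $\mathcal{M}_{\mathbb{H}_i}$ at $(x_i,t_i)$ and asserting that the limit is a tangent flow of $\mathcal{M}$. This identification is not justified: the flows being rescaled are the \emph{surgery} flows, and without a priori control (noncollapsing, uniform $2$-convexity, curvature estimates) on $\mathcal{M}_{\mathbb{H}_i}$ inside $\Omega_{(\alpha,\beta)}$, there is no compactness theory that forces the blow-up limit to be an ancient $\alpha$-noncollapsed flow, let alone a tangent flow of $\mathcal{M}$. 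Proposition~\ref{convergence2} only gives convergence \emph{away} from the singular set, so it says nothing at the scale $H_{\mathrm{trig}}^{(i)}$ near the accumulation point.

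The paper resolves this by an entirely different mechanism. Rather than attempting a direct microscopic limit, it inducts over the finitely many connected components $\Omega_j$ of $\Omega_{(\alpha,\beta)}$. For each $\Omega_j$, Proposition~\ref{convergence2} is used only along the smooth boundary neighbourhood $N_j$ to show that for $H_{\mathrm{th}}$ large the surgery flow is a $\delta$-graph over $\mathcal{M}$ there, hence $\beta$-uniformly $2$-convex and locally $\alpha$-noncollapsed on $\partial\Omega_j$. The maximum principle then propagates these properties throughout the interior of $\Omega_j$ across all surgeries. At this point the restriction of $\mathcal{M}_\mathbb{H}$ to $\Omega_j$ is an $(\alpha,\delta)$-flow in the Haslhofer--Kleiner sense, and their existence theorem and canonical neighbourhood theorem (\cite[Theorems~1.21, 1.22]{hksurg}) apply verbatim to produce the required $\Theta$. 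The finite-time vanishing is not obtained from the barriers but from the observation that $\mathcal{M}$ is eventually contained in the last component of $\Omega_{(\alpha,\beta)}$, where the flow is genuinely $2$-convex surgery. Your proposal is missing this boundary-control-plus-maximum-principle reduction to the $2$-convex case; without it, neither the rescaling limit nor the applicability of \cite{hksurg} to $\mathcal{M}_{\mathbb{H}_i}$ can be established.
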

 
 For the precise definition of a weak $(\Balpha,\delta,\mathbb{H})$-flow, see Definition \ref{weakflow}.

Our proof relies on two key ideas. The first is the construction of barriers to flow with surgery, Theorem \ref{barriers}, to establish Hausdorff convergence of surgical flows to the level set flow. Such an idea was first explored by Lauer \cite{lauer} for 2-convex flows. Their idea is not directly applicable, as they take advantage of the set monotonicity of such flows. Instead, we consider flows from near-by initial conditions and show they act as barriers to surgery flows.

  Before detailing the second tool, we make the following observations. Let $\{\mathcal{N}^i\}_{i\in\mathbb{N}}$ be sequence of integral unit-regular Brakke flows, and presume each flow has a singular set of small Hausdorff dimension. Suppose the sequence converges in the Hausdorff sense to a Brakke flow $\mathcal{M}$. By further assuming $\mathcal{N}^i$ converge smoothly to $\mathcal{M}$ at the initial time, the result of \cite{ccms20} allows for Hausdorff convergence to be improved to Brakke convergence. Turning our attention back to weak flows with surgery, we observe in regions where no surgical modifications take place, a surgical flow is a smooth mean curvature flow. It is hence desirable to understand where surgical modifications take place. This is the purpose of our second tool, Proposition \ref{surgcontrol}, which shows surgeries accumulate in the singular set. Moreover, we actually show the smooth convergence of the flows with surgery by probing the behaviour of flows with surgery in neighbourhoods of regular points of $\mathcal{M}$ with a careful combination of pseudolocality for mean curvature flow \cite{pseudo}, graphical estimates \cite{EHest} and the curvature estimates of Haslhofer--Kleiner, \cite{hksurg}. This second tool requires us to only permit surgery in a set with somewhat technical restrictions on the behaviour of the flow along the boundary. These requirements ensure that the hypotheses of the curvature estimates are satisfied.

We consider $\Omega_{(\alpha,\beta)}$ - an open neighbourhood of the singular set with finitely many connected components, along the boundary of which the flow $\mathcal{M}$ behaves in a fashion suitable for surgery in the interior. We examine the class of weak flows with surgery, derived from $M$. Surgeries are performed only in the set $\Omega_{(\alpha,\beta)}$.

As previously noted, a priori little can be known about the long time behaviour of modified flows due to the parabolic nature of mean curvature flow. Using the above tools we demonstrate the parameters can be chosen suitably such that the surgery flow will be a small graph over $\mathcal{M}$ along the boundary of $\Omega_{(\alpha,\beta)}$.  The existence of suitable parameters is shown by a convergence result, Proposition \ref{convergence2}.  It then follows that the weak surgery flows are smooth flows with surgery in the sense of Haslhofer--Kleiner inside $\Omega_{(\alpha,\beta)}$, the canonical neighbourhoods of the flow $\mathcal{M}$, via the maximum principle, and hence the arguments of Haslhofer--Kleiner can be applied to show the existence of a smooth flow with surgery.

In addition, we show that such mean curvature flows with surgery approximate the weak flow, compare \cite{lauer, head} in the 2-convex case.
 
 \begin{theorem}
 Taking the limit as $H_\mathrm{th}\to \infty$, the weak $(\Balpha,\delta,\mathbb{H})$ surgical flows converge in the Hausdorff sense to $\mathcal{M}$. In particular, away from the singular set of $\mathcal{M}$ the convergence is smooth.
 \end{theorem}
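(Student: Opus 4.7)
The plan is to combine Brakke compactness, the barrier construction already used in the existence theorem, the uniqueness theorem just established, and White's local regularity to identify every subsequential limit of $\mathcal{M}_\mathbb{H}$ with $\mathcal{M}$, and then upgrade this to the desired Hausdorff and smooth convergence. First I would take an arbitrary sequence $H_\mathrm{th}^i\to\infty$ with $\mathbb{H}_i\geq\Theta$, producing smooth surgery flows $\mathcal{M}_{\mathbb{H}_i}$ by the existence theorem. Each is a unit-regular, cyclic mod 2 integral Brakke flow, so Ilmanen--Brakke compactness yields a subsequential limit $\mathcal{N}$, which remains unit-regular by White's regularity theorem and cyclic mod 2 because that property is preserved under Brakke limits.

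Next I would identify $\mathcal{N}$ with $\mathcal{M}$. The paper establishes that the equidistant hypersurfaces to $M$, evolved by Brakke flow, act as inner and outer barriers to $(\Balpha,\delta,\mathbb{H})$-flows provided the surgery is performed at sufficiently small necks, that is for $H_\mathrm{th}$ large. Choosing the equidistance parameter $\varepsilon_i\to 0$ as $H_\mathrm{th}^i\to\infty$, these barriers themselves converge to $\mathcal{M}$ in the Hausdorff sense, following Lauer \cite{lauer} and Head \cite{head}. The sandwich pins $\mathrm{supp}(\mathcal{N})$ inside $\mathrm{supp}(\mathcal{M})$; the reverse inclusion follows from unit-regularity together with the canonical neighbourhood control of Proposition \ref{convergence2}, which prevents mass loss in the limit near the singularities of $\mathcal{M}$. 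Moreover, the canonical neighbourhoods pass to the limit, so all singularities of $\mathcal{N}$ are spherical or neck-pinch, and the uniqueness theorem forces $\mathcal{N}=\mathcal{M}$. Since this holds for every subsequence, the full sequence converges.

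Hausdorff convergence of the spacetime supports then follows at once from the barrier sandwich, whose Hausdorff distance to $\mathcal{M}$ is bounded by $\varepsilon_i\to 0$. For the smooth convergence away from the singular set, I would apply White's local regularity theorem at any regular spacetime point $(x,t)$ of $\mathcal{M}$, where the Gaussian density equals $1$. Brakke convergence $\mathcal{M}_{\mathbb{H}_i}\to\mathcal{M}$ with unit-regular limit forces the Gaussian densities of $\mathcal{M}_{\mathbb{H}_i}$ at $(x,t)$ to approach $1$, hence White's theorem provides uniform smooth estimates on $\mathcal{M}_{\mathbb{H}_i}$ in a fixed parabolic neighbourhood of $(x,t)$ for $i$ large. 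Arzel\`a--Ascoli then yields smooth subsequential convergence on any compact subset of the regular set, and uniqueness of the limit promotes this to convergence of the full sequence.

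The main obstacle is ruling out mass loss in the Brakke limit inside the cover $\Omega_{(\alpha,\beta)}$ of the singular set: components pinched off by surgery could, a priori, fail to reappear in $\mathcal{N}$, making $\mathrm{supp}(\mathcal{N})$ strictly smaller than $\mathrm{supp}(\mathcal{M})$. This is precisely what the canonical neighbourhood theorems of \cite{chh18,chhw19}, combined with the connectedness of the regular set from \cite{ccms20} that underlies Proposition \ref{convergence2}, are designed to preclude: any hypothetical missing component must lie in a canonical neighbourhood, where the surgery faithfully tracks the Brakke flow up to an error that vanishes as $H_\mathrm{th}\to\infty$.
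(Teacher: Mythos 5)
Your argument is essentially the paper's: the paper proves this theorem by citing Proposition \ref{convergence2} (Brakke convergence of $\mathcal{M}_{\mathbb{H}_i}\to\mathcal{M}$ away from $\mathfrak{S}$, established via the $\varepsilon$-barriers of Lemma \ref{barriers} and the density argument of Lemma \ref{convergence}) together with White's local regularity theorem, which is exactly the skeleton you describe. You re-derive a fair amount of what Proposition \ref{convergence2} already encapsulates (barrier sandwich, identification of the subsequential limit, upper semicontinuity of density, passage from Brakke to smooth convergence), but the ingredients and their roles coincide with the paper's.
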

 
 Finally, we combine our proof of the existence of a mean curvature flow with surgery with the existence of generic low entropy flows established by Chodosh--Choi--Mantoulidis--Schulze to get a new bound on entropy for the low-entropy Schoenflies conjecture, as conjectured in \cite[Conjecture 1.9]{ccms21}.
 \begin{theorem}[Low-entropy Schoenflies for $\mathbb{R}^4$]
    Let $\Sigma^3\subset\mathbb{R}^4$ be a hypersurface homeomorphic to $\mathbb{S}^3$ with entropy $\lambda(\Sigma)\leq\lambda(\mathbb{S}^1\times \mathbb{R}^2)$. Then $M$ is smoothly isotopic to the round $\mathbb{S}^3$.
 \end{theorem}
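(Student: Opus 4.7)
The plan is to feed the generic low-entropy flow of Chodosh--Choi--Mantoulidis--Schulze \cite{ccms21} into Theorem~2 of the present paper and then perform a handle-decomposition analysis of the enclosed $4$-dimensional region.

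First, I apply \cite{ccms21}: since $\lambda(\Sigma)\leq\lambda(\mathbb{S}^1\times\mathbb{R}^2)$, a small smooth isotopy deforms $\Sigma$ to a nearby embedded $\tilde\Sigma\cong\mathbb{S}^3$ with $\lambda(\tilde\Sigma)<\lambda(\mathbb{S}^1\times\mathbb{R}^2)$, such that the unit-regular cyclic mod $2$ Brakke flow $\tilde{\mathcal{M}}$ from $\tilde\Sigma$ has only multiplicity-one generic singularities, i.e.\ tangent flows of type $\mathbb{S}^3$, $\mathbb{S}^2\times\mathbb{R}$, or $\mathbb{S}^1\times\mathbb{R}^2$. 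Huisken monotonicity forces the Gaussian density at any singularity to be at most $\lambda(\tilde\Sigma)<\lambda(\mathbb{S}^1\times\mathbb{R}^2)$, which excludes the $\mathbb{S}^1\times\mathbb{R}^2$-type tangent flows and leaves only spherical and neck-pinch singularities. Theorem~2 therefore produces a smooth $(\Balpha,\delta,\mathbb{H})$ surgery flow $\mathcal{M}_\mathbb{H}$ starting from $\tilde\Sigma$ that vanishes in finite time.

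Second, I track the topology of $\mathcal{M}_\mathbb{H}(t)$ and of its enclosed region $\Omega(t)\subset\mathbb{R}^4$. Between surgery times the flow is a smooth ambient isotopy, so each component retains its diffeomorphism type. Starting from the single component $\tilde\Sigma\cong\mathbb{S}^3$, the smooth Alexander theorem in dimension three forces every embedded surgery neck $\mathbb{S}^2$ inside an $\mathbb{S}^3$-component to be separating and to bound a smooth $3$-disk on each side; the cap-and-split surgery therefore replaces one $\mathbb{S}^3$-component by two $\mathbb{S}^3$-components, and on the enclosed-region side it corresponds (reading time backwards) to attaching a $4$-dimensional $1$-handle to $\Omega(t^+)$. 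Discarded components at the high-curvature scale $H_\mathrm{th}$ are convex, hence bound standard smooth $4$-balls. By induction, every component of $\mathcal{M}_\mathbb{H}(t)$ is a smoothly embedded $\mathbb{S}^3$ bounding a smooth $4$-ball. Running the film in reverse from the vanishing time shows that $\Omega(0)$ is built from finitely many standard $4$-balls by successive $1$-handle attachments, so $\Omega(0)\cong \natural^r(\mathbb{S}^1\times B^3)$ for some $r\geq 0$; since $\partial\Omega(0)=\tilde\Sigma\cong\mathbb{S}^3$ is simply connected and $\partial\bigl(\natural^r(\mathbb{S}^1\times B^3)\bigr)=\#^r(\mathbb{S}^1\times\mathbb{S}^2)$, we must have $r=0$ and $\Omega(0)$ is a standard smooth $4$-ball. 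A standard $4$-ball embedded in $\mathbb{R}^4$ is ambient-isotopic to the round one, so $\tilde\Sigma$, and hence $\Sigma$, is smoothly isotopic to the round $\mathbb{S}^3$.

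The main obstacle is making the handle-decomposition bookkeeping precise. One must verify that each surgery from Theorem~2 corresponds to an honest $1$-handle attachment on the enclosed-region side of $\Omega(t)$ (rather than on the complement), and that the glued-in cap disks are smoothly standard in $\mathbb{R}^4$; this rests on the mean-convex and $2$-convex structure of canonical neighbourhoods $\Omega_{(\alpha,\beta)}$ from \cite{chhw19}, together with the convexity of discarded components, which together pin down the ``interior'' orientation of all surgery and extinction events.
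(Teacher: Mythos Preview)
Your overall strategy---forward induction to keep every component an $\mathbb{S}^3$, then backward induction to assemble the enclosed region from standard pieces---parallels the paper's, but there is a genuine gap at the sentence ``Discarded components at the high-curvature scale $H_\mathrm{th}$ are convex, hence bound standard smooth $4$-balls.'' Discarded components in this surgery formalism are \emph{not} convex in general: Theorem~\ref{dropped} (from \cite{hksurg}) only says they are diffeomorphic to $\mathbb{S}^n$ or $\mathbb{S}^{n-1}\times\mathbb{S}^1$, and the mere presence of the torus alternative already shows convexity can fail. After your Alexander-theorem induction rules out the torus case you are left with $2$-convex embedded $3$-spheres, and asserting that these bound standard $4$-balls is exactly the Schoenflies statement you are trying to prove---so the argument becomes circular at this point.

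The paper breaks this circularity by invoking \cite{bhh} (Buzano--Haslhofer--Hershkovits), which shows that every $2$-convex embedded $n$-sphere is smoothly isotopic to the round sphere; this is a nontrivial external ingredient and is what seeds the backward induction. With that in hand the paper runs the induction at the level of the hypersurfaces themselves (each reverse-surgery step is a connected sum of things already known to be standardly embedded spheres), which also sidesteps the inward/outward $1$-handle versus $3$-handle bookkeeping on $\Omega(t)$ that you correctly flag as an obstacle. Your $4$-dimensional handle-decomposition route could be made to work once you plug in \cite{bhh} for the base case, but the paper's hypersurface-level argument is shorter and avoids having to resolve which side of the surface receives each handle.
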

Surgery is used to decompose the surface into spheres and tori, and the topological properties of the flow are exploited to rule out tori. The previous best bound was established independently by Bernstein--Wang \cite{bw20} and Chodosh--Choi--Mantoulidis--Schulze \cite{ccms20}.
 
\subsection{Organisation}
 In Section 2, we recap the structure of Haslhofer--Kleiner surgery. In Section 3, we discuss the necessary adaptations to the definitions of \cite{hksurg} for our more general setting. In Section 4, we construct barriers and detail the structure and stability of weak surgery flows. In Section 5, we prove the existence of a smooth mean curvature flow with surgery approximating the unit-regular Brakke flow. Finally, in Section 6 we apply the results to the low-entropy Schoenflies conjecture. 
 
 \subsection{Acknowledgements}
A great deal of thanks goes to Felix Schulze, the author's supervisor, for the discussion and guidance provided. The author is also grateful to Otis Chodosh and Huy The Nguyen. The author would like to thank the referee for their constructive comments.

\section{Preliminaries}\label{prelim}
For the convenience of the reader, we re-state central definitions and tools from the field.

\begin{defn}
The parabolic cylinder of radius $r>0$ centred at the space-time point $X=(\mathbf{x},t)\in\mathbb{R}^{n+1}\times\mathbb{R}$ is defined as
\begin{align*}
    P(X,r)=B(\mathbf{x},r)\times (t-r^2,t+r^2)
\end{align*}
We use the terminology `backwards (resp. forwards) parabolic cylinder' for a parabolic cylinder with a time interval of the form $(t-r^2,t]$, (resp. $ [t,t+r^2)$).
\end{defn}
\begin{defn}[Mean Curvature flow] Let $M^n \subset \mathbb{R}^{n+1}$ be a smoothly embedded hypersurface. A mean curvature flow
$\mathcal{M}=\{M_t\subset U\}_{t\in[0,t_0)}$ in an open subset $U\subset \mathbb{R}^{n+1}$ is a smooth family of hypersurfaces such that \begin{align*}
    M_0 &= M, \\
    \left(\frac{\partial}{\partial t}\mathbf{x}\right)^\perp&=\mathbf{H}_{M_t}(\mathbf{x})\, ,
\end{align*}
where $\mathbf{H}_{M_t}(\mathbf{x})$ is the mean curvature vector.
\end{defn}
\begin{defn}
Given a choice of unit normal, $\nu$, we fix an orientation, and thus can write
\begin{align*}
    \mathbf{H}= - H\nu
\end{align*}
We refer to $H=H(\mathbf{x})$ as the (scalar) mean curvature.
\end{defn}
The flow is non-linear and develops singularities. A rich theory has been developed to continue the flow past such singularities. 
\begin{defn}[Integral Brakke Flow \cite{Brakke,ilmanen}]
We follow the formalism of \cite{white21}.
An ($n$-dimensional) \textit{integral Brakke flow} in $\mathbb{R}^{n+1}$ is a 1-parameter family of Radon measures $\{\mu_t\}_{t\in I}$ over an interval $I\subset \mathbb{R}$ such that:
\begin{enumerate}[(i)]
   \item For almost every $t$ there exists and integral $n$-dimensional varifold $V(t)$ with $\mu_t=\mu_{V(t)}$ so that $V(t)$ has locally bounded first variation and has mean curvature $\mathbf{H}$ orthogonal to $\mathrm{Tan}(V(t),\cdot)$ almost everywhere.
    \item For a bounded interval $[t_1,t_2]\subset I$ and any compact set $K$ \begin{align*}
        \int^{t_2}_{t_1}\int_K(1+|\mathbf{H}|^2)\,d\mu_t\,dt<\infty\, .
    \end{align*}
    \item If $[t_1,t_2]\subset I$ and $f\in C^1_c(\mathbb{R}^{n+1}\times [t_1,t_2])$ has $f\geq0$ then \begin{align*}
        \int f(\cdot,t_2)\,d\mu_{t_2}-\int f(\cdot,t_1)\,d\mu_{t_1}\leq \int^{t_2}_{t_1}\int_K \Big(-|\mathbf{H}|^2f+\mathbf{H}\cdot\nabla f +\frac{\partial}{\partial t}f\Big)\,d\mu_t\, dt
    \end{align*}
\end{enumerate}
We write $\mathcal{M}$ for a Brakke flow $\{\mu_t\}_{t\in I}$ to refer to the family of measures $I\ni t\mapsto \mu_t $ satisfying Brakke's inequality.
\end{defn}

\begin{defn}[Density and Huisken's Monotonicity]
For $X_0:=(\mathbf{x}_0,t_0)\in \mathbb{R}^{n+1}\times \mathbb{R}$, consider the backward heat kernel based at $(\mathbf{x}_0,t_0)$:
\begin{align*}
    \rho_{X_0}(\mathbf{x},t)=(4\pi(t_0-t))^{-n/2}\exp\left(-\frac{|\mathbf{x}-\mathbf{x}_0|^2}{4(t_0-t)}\right),
\end{align*}
for $\mathbf{x}\in\mathbb{R}^{n+1}, t<t_0$. For a Brakke flow $\mathcal{M}$ and $r>0$ we set
\begin{align*}
    \Theta_\mathcal{M}(X_0,r):=\int_{\mathbb{R}^{n+1}}\rho_{X_0}(\mathbf{x},t_0-r^2)\, d \mu_{t_0-r^2}
\end{align*}
$\Theta_\mathcal{M}(X_0,r)$ is known as the density ratio at $X_0$ at scale $r>0$. Huisken's monotonicity formula \cite{huis90} implies that 
\begin{align*}
    \frac{d}{dt}\int \rho_{X_0}(\mathbf{x},t)\, d\mu_t\leq -\int \left|\mathbf{H}-\frac{(\mathbf{x}-\mathbf{x}_0)^\perp}{2(t-t_0)}\right|^2\rho_{X_0}(\mathbf{x},t)\, d\mu_t\, .
    \end{align*}
  In particular, the Gaussian density of $\mathcal{M}$ at $X_0$ is defined by
    \begin{align*}
        \Theta_\mathcal{M}(X_0):=\lim_{r\searrow0}\Theta_\mathcal{M}(X_0,r)\, .
    \end{align*}
\end{defn}
\begin{defn}[Parabolic Rescaling]
Let $\mathcal{M}=\{M_t\}_{t\in[0,T)}$ be a mean curvature flow (Brakke flow). For any $\lambda>0$, we denote the parabolic rescaling of space-time by $\lambda$ as $\mathcal{D}_\lambda:(\mathbf{x},t)\mapsto (\lambda \mathbf{x}, \lambda^2 t)$.
We denote by $\mathcal{D}_\lambda(\mathcal{M}-X_0)$ the mean curvature flow (resp. Brakke flow) obtained from $\mathcal{M}$ by parabolic dilation around $X_0$ by $\lambda$. That is,
\begin{align*}
    \mathcal{D}_\lambda(\mathcal{M}-X_0)=\{ \mu^\lambda_t \}_{t'\in [-\lambda^2t_0,\lambda^2(T-t_0))},\\
    \mathrm{with }\ \mu^\lambda_t(A)=\lambda^n \mu_{t_0+\lambda^{-2}t}(\lambda^{-1}A+x_0) 
\end{align*}
\end{defn}
\begin{defn}[Tangent flow]
Let $\{\lambda_i\}$ be a sequence s.t. $\lambda_i\to\infty$. We define a tangent flow at the space-time point $X_0\in \mathcal{M}$ as a subsequential limiting Brakke flow of the sequence parabolic rescalings of $\mathcal{M}$ around $X_0$ by $\lambda_i$.
\end{defn}

The monotonicity formula implies that all tangent flows are self-similar, i.e.~their time $-1$ slice is given by a (weak) self-shrinker. 
\begin{defn}[Self-shrinker]
A hypersurface $\Sigma\subset\mathbb{R}^{n+1}$ is called a \textit{self-shrinker} if
\begin{align*}
    \mathbf{H}_\Sigma(\mathbf{x})+\frac{\mathbf{x}^\perp}{2} = 0.
\end{align*}
\end{defn}

We will only be considering Brakke flows with
(a) Spherical and (b) Neck-pinch singularities.

\begin{remark}
  Tangent flows are not necessarily unique, however, it follows from \cite{Huisken84} that at a point with a multiplicity one spherical tangent flow, all tangent flows are spheres. For multiplicity one cylindrical tangent flows, uniqueness was established in \cite{CM15}, so the above tangent flows are unique, and one can refer to \textit{the} tangent flow.
 \end{remark}
 \begin{remark} \label{singularstructure}
 The structure of the singular set of a Brakke flow $\mathcal{M}$ with spherical and (generalised) cylindrical singularities is well understood, see \cite{bw97, CM15, CM16}.
 
 \end{remark}

We will be considering unit-regular and cyclic (mod 2)  Brakke flows. The definition of an integral Brakke flow permits sudden vanishing; to (partially) avoid this, one can define the class of unit-regular Brakke flows. This class forbids vanishing at regular points of the flow.
\begin{defn}[Unit-regular and cyclic Brakke Flows \cite{White09}] 
An integral Brakke flow $\mathcal{M}=\{\mu_t\}_{t\in I}$ is said to be
\begin{itemize}
   
\item \textit{unit-regular} if $\mathcal{M}$ is smooth in some space-time neighbourhood of any space-time point $X$ with $\Theta_\mathcal{M}(X)=1$;
\item \textit{cyclic (mod 2)} if, for a.e.~ $t\in I, \mu_t=\mu_{V(t)}$ for an integral varifold $V(t)$ whose unique associated rectifiable mod-2 flat chain $[V(t)]$ has $\partial[V(t)]=0$.
\end{itemize}
\end{defn}

Finally, we state the following theorem from \cite{ccms20}. The ideas will be used in Section \ref{barriersandstability} to show convergence properties of the $\varepsilon$-barriers and of flows with surgery.

\begin{defn}
For a Brakke flow $\mathcal{M}$, we define $\widehat{\mathrm{reg}}\, \mathcal{M}$ to be the set of points $X=(\mathbf{x},t)$ such that there is an $\varepsilon>0$ with \begin{align*}
    \mathcal{M}\lfloor(B_\varepsilon(\mathbf{x})\times(t-\varepsilon^2,t]=k\mathcal{H}^n\lfloor M(t),
\end{align*}
where $k$ is a positive integer and $M(t)$ is a smooth mean curvature flow. 
We write $\mathrm{reg}\,\mathcal{M}$ as the above set with $k=1$; thus, $\mathrm{reg}\,\mathcal{M}\subset\widehat{\mathrm{reg}}\, \mathcal{M}$.
\end{defn}

\begin{theorem}[\textrm{\cite[Corollary F.4]{ccms20}}]\label{smallsing}
Suppose that $\mathcal{M}$ is a unit-regular integral $n$-dimensional Brakke flow in $\mathbb{R}^{n+k}$ with $\mu(t) = \mathcal{H}^n\lfloor M(t)$ for $t \in [0,\delta)$, where $M(t)$ is a mean curvature flow of connected, properly embedded submanifolds of $\mathbb{R}^{n+k}$ and $\delta>0$. If
\begin{align*}
    \mathcal{H}^n_P(\mathrm{supp}(\mathcal{M})\backslash\widehat{\mathrm{reg}}\mathcal{M})=0
\end{align*}
Then $ \widehat{\mathrm{reg}}\, \mathcal{M}=\mathrm{reg}\,\mathcal{M}$ is connected. 
\end{theorem}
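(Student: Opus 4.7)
My plan is to split the statement into (i) the equality $\widehat{\mathrm{reg}}\,\mathcal{M}=\mathrm{reg}\,\mathcal{M}$ and (ii) connectedness of this common set, deriving (i) cheaply from (ii) via a multiplicity argument and spending the main effort on (ii). For (i), observe that $\widehat{\mathrm{reg}}\,\mathcal{M}$ is open by definition and that near each of its points the flow has the form $k\,\mathcal{H}^n\lfloor M(t)$ for a smooth MCF, so the Gaussian density equals the integer $k\geq 1$. Huisken's monotonicity makes $X\mapsto\Theta_{\mathcal{M}}(X)$ upper semi-continuous, while on smooth MCF neighbourhoods $\Theta_{\mathcal{M}}$ is jointly continuous; together these force $k$ to be locally constant on $\widehat{\mathrm{reg}}\,\mathcal{M}$. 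Since the initial slab $\bigcup_{t\in[0,\delta)}M(t)\times\{t\}$ lies in $\widehat{\mathrm{reg}}\,\mathcal{M}$ with $k=1$, connectedness forces $k\equiv 1$ throughout, and the unit-regular hypothesis then promotes each density-one point to a smooth multiplicity-one neighbourhood, i.e.~to a point of $\mathrm{reg}\,\mathcal{M}$.

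For (ii) I argue by contradiction: suppose $\widehat{\mathrm{reg}}\,\mathcal{M}=A\sqcup B$ with $A,B$ open and nonempty, arranged so that the connected initial slab lies in $A$, and fix $X_0=(x_0,t_0)\in B$. I aim to build a continuous path in $\widehat{\mathrm{reg}}\,\mathcal{M}$ from $X_0$ back to the initial slab. Using the local smooth MCF structure at each point of $\widehat{\mathrm{reg}}\,\mathcal{M}$ I follow the backward-in-time flow from $X_0$ and extend it maximally. Either the extension reaches time $0$ (a contradiction) or it is blocked at some $X_*=(x_*,t_*)\in S:=\mathrm{supp}(\mathcal{M})\setminus\widehat{\mathrm{reg}}\,\mathcal{M}$. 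At such a blockage the path must detour around $S$, and this is where $\mathcal{H}^n_P(S)=0$ enters: $\mathrm{supp}(\mathcal{M})$ carries parabolic Hausdorff dimension $n+2$ while $S$ has parabolic dimension at most $n$, a codimension gap large enough to preclude $S$ from locally separating $\mathrm{supp}(\mathcal{M})$.

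A convenient slicewise implementation of the detour is the following. For $\mathcal{H}^1$-almost every $t$ one has $\mathcal{H}^{n-2}(S\cap\{t\})=0$ inside $\mathrm{supp}(\mu_t)$, so on these good slices the spatial regular set is connected, since a set of Hausdorff codimension at least $2$ cannot disconnect a smooth hypersurface. Applying this in a small space-time neighbourhood of $X_*$ and combining with Brakke continuity of $t\mapsto\mu_t$ and the clearing-out lemma produces a small-scale path in $\widehat{\mathrm{reg}}\,\mathcal{M}$ that goes around $X_*$. The main obstacle is making this argument uniform across the possibly uncountable family of obstructions: one must cover $S$ by countably many parabolic balls on which a local detour is available, glue the local detours along a suitably chosen time axis, and then diagonalise to produce a global path. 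I would handle the local analysis near each bad $X_*$ via tangent-flow analysis, exploiting the self-shrinker tangent at $X_*$ whose singular set is itself $\mathcal{H}^n_P$-null, so that a detour available in the tangent flow can be transferred to $\mathcal{M}$ at sufficiently small scales via pseudolocality.
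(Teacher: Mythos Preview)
The paper does not prove this theorem; it is quoted from \cite[Corollary~F.4]{ccms20} and used as a black box in Section~\ref{barriersandstability} and in the proof of Theorem~\ref{existence}. There is no proof in the present paper to compare your attempt against.

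On its own merits, your reduction (i) is correct: the integer multiplicity $k$ is locally constant on $\widehat{\mathrm{reg}}\,\mathcal{M}$ by upper semicontinuity of Gaussian density combined with local smoothness, it equals $1$ on the initial slab, and once (ii) is known connectedness forces $k\equiv 1$; unit-regularity then gives $\widehat{\mathrm{reg}}\,\mathcal{M}=\mathrm{reg}\,\mathcal{M}$.

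Part (ii), however, is a sketch with a gap you have yourself identified. The backward-in-time path with local detours around $S$ is the right picture, and the parabolic Eilenberg-type slicing $\mathcal{H}^n_P(S)=0\Rightarrow\mathcal{H}^{n-2}(S_t)=0$ for a.e.\ $t$ is valid. But the step ``glue the local detours along a suitably chosen time axis, and then diagonalise to produce a global path'' hides the entire difficulty: the set of times at which your backward trajectory meets $S$ need not be discrete---it can be a perfect set in $[0,t_0]$---and there is no transfinite or compactness mechanism in your outline that assembles uncountably many local detours into a single continuous path reaching $t=0$. Your closing appeal to tangent flows and pseudolocality does not repair this: pseudolocality (Theorem~\ref{pseudo}) propagates graphical control \emph{forward} from a nearly flat slice and says nothing about the topology of $\mathrm{supp}(\mathcal{M})$ near a point of $S$, and a path constructed in a tangent flow (which is only a subsequential blow-up limit) has no canonical pull-back to $\mathcal{M}$. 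A workable route for (ii) is to abandon path-building and argue instead with sets: show that the relative boundary in $\mathrm{supp}(\mathcal{M})$ of the component $A$ containing the initial slab is contained in $S$, and then use clearing-out together with $\mathcal{H}^n_P(S)=0$ to rule out a nonempty $B$. For the details you should consult the cited reference.
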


 Here $\mathcal{H}^n_P$ denotes $n$-dimensional parabolic Hausdorff measure. This theorem provides vital information on the behaviour of unit-regular Brakke flows with small singular set.

Another formulation of a weak solution to the mean curvature flow is that of the level set flow. It was first introduced as a viscosity solution to the mean curvature flow independently by Evans--Spruck \cite{ES1} and Chen--Giga--Goto \cite{cgg}. The following geometric definition was given by Ilmanen, \cite{ilmanen}.

\begin{defn}[Weak and Level set flow, \cite{ilmanen}]
Let $K\subset\mathbb{R}^{n+1}$ be closed. A one-parameter family of closed sets, $\{K_t\}_{t\geq0}$, with initial condition $K_0=K$ is said to be a \textit{weak set flow} for $K$ if for every smooth mean curvature flow $M_t$ of compact hypersurfaces defined on $[t_0,t_1]$, we have 
\begin{align*}
    K_{t_0}\cap M_{t_0}=\emptyset \implies K_t\cap M_t=\emptyset
\end{align*}
for all $t\in [t_0,t_1]$.

The \textit{level set flow} is defined as the maximal weak set flow, i.e.~ the union of all weak set flows from $K$.
\end{defn}

\subsection{Overview of 2-Convex Surgery}
 The following is a recap of \cite{hksurg}. 
 \begin{defn}[$\alpha$-noncollapsed, \cite{andrews}, \cite{hkestimates}]
Let $\alpha>0$. A mean convex hypersurface $M^n$ bounding an open region $\Omega$ in $\mathbb{R}^{n+1}$
is $\alpha$–noncollapsed (on the scale of the mean curvature) if for every $x\in M $ there are closed
 balls $B_\mathrm{int}\subset \overline\Omega$ and $B_\mathrm{ext}\subset \mathbb{R}^{n+1}\backslash\Omega$ of radius at least $\alpha/H(x)$ tangential to $M$ at $x$, from the interior and exterior of $M$ respectively. A smooth mean curvature flow is said to be $\alpha$-noncollapsed if every time slice is $\alpha$-noncollapsed.
\end{defn} 

This definition may be suitably localised. See Definition \ref{localandrews}.
\begin{defn}[$\beta$-uniformly 2-convex]
A mean convex hypersurface $M$ is said to be $\beta$-uniformly 2-convex, for $\beta>0$, if 
\begin{align*}
    \lambda_1+\lambda_2>\beta H.
\end{align*}
Where $\lambda_i$ are the ordered principal curvatures with $\lambda_1\leq\ldots\leq\lambda_n$, and $H$ is the mean curvature.
\end{defn}

Recall, `$\alpha$-noncollapsed'-ness is preserved under the mean curvature flow by the maximum principle, \cite{andrews}. $\beta$-uniform 2-convexity is preserved by the Hamilton tensor maximum principle.

\begin{defn}[Strong $\delta$-neck \text{\cite[Definition 2.3]{hksurg}}]
Let $\delta>0$. We say a mean curvature flow $\mathcal{M}=\{M_t\subset U\}_{t\in I}$ has a strong $\delta$-neck with centre $p$ and radius $s$ at time $t_0\in I$ if $\mathcal{M}_{(p,t_0),s^{-1}}=\mathcal{D}_{s^{-1}}(\mathcal{M}-(p,t_0))$ is $\delta$-close in $C^{\lfloor1/\delta \rfloor}$ in $(B^U_{1/\delta}\times (-1,0])$ to the evolution of a solid round cylinder of radius 1 at $t=0$. Here $B^U_{1/\delta}= s^{-1}((B(p,s/\delta)\cap U)-p)\subseteq B(0,1/\delta)\subset \mathbb{R}^{n+1}$ and $\mathcal{D}_\lambda$ denotes the parabolic dilation by $\lambda$.
\end{defn}
\begin{defn}[Standard cap \text{\cite[Definition 2.2]{hksurg}}]\label{stdcap}
A standard cap is a smooth convex domain $K^{st} \subset \mathbb{R}^{n+1}$ that coincides with a solid
round half-cylinder of radius 1 outside a ball of radius 10.
\end{defn}
The evolution from such a cap is unique, $\beta$-uniformly 2-convex and $\alpha$-noncollapsed for some $\alpha,\beta>0$, \cite[Proposition 3.8]{hksurg}. This is a key component of the canonical neighbourhood theorem for mean curvature flows with surgery.

A surgery algorithm seeks to replace $\delta$-necks with standard caps, the following is the gluing algorithm used.

\begin{defn}[Replacing a $\delta$-neck by standard caps \text{\cite[Definition 2.4]{hksurg}}]\label{necktocap}
We say that the final time slice of a strong $\delta$-neck with centre $p$ and radius $s$ is replaced by a pair of standard caps if the pre-surgery domain $K^- \subset U$ is replaced by a post-surgery domain $ K^{\#} \subset K^-$ such that following statements hold.
\begin{enumerate}
    \item The modification takes place inside a ball $B=B(p,5\Gamma s)$
    \item There are bounds for the second fundamental form and its derivatives \begin{align*}
        \sup_{\partial K^\#\cap B} |\nabla^\ell A|\leq C_\ell s^{-1-\ell}
    \end{align*}
    \item If $B$ from point (1) satisfies $B \subset U$ then for every point $p_\# \in \partial K^\# \cap B$ with $\lambda_1(p_\#)<0$ there is a point $p_{-}\in \partial K^-\cap B$ with $\frac{\lambda_1}{H}(p_-)\leq\frac{\lambda_1}{H}(p_\#)$ 
    \item If $B(p, 10\Gamma s)\subset U$ then $s^{-1}(K^\#-p))$ is $\delta$-close in $B(0, 10 \Gamma)$ to a pair of disjoint standard caps which are at distance $\Gamma$ from the origin.
\end{enumerate}
Here, $\Gamma >0$ denotes a cap separation parameter that is fixed later.
\end{defn}

Haslhofer--Kleiner begin by defining a broader class of flows, of which mean curvature flow with surgery belongs. It is a class of piece-wise smooth, mean convex, $\alpha$-noncollapsed, mean curvature flows with $\delta$-necks replaced by caps. They fix a $\mu\in[1,\infty)$, used below.
\begin{defn}[$(\alpha,\delta)$-flow \text{\cite[Definition 1.3]{hksurg}}]\label{alphadelta}
An $(\alpha,\delta)$-flow $\mathcal{K}$ is a collection of finitely many smooth $\alpha$-noncollapsed flows $\{K^i_t\subset U\}_{t\in[t_{i-1},t_i]}, \ (i=1,\ldots, k; \ t_0<\cdots,t_k)$ in an open set $U\subset\mathbb{R}^{{n+1}}$ such that the following statements hold.
\begin{enumerate}
    \item For each $i=1,\ldots, k-1$, the final time slices of some collection of disjoint strong $\delta$-necks are replaced by pairs of standard caps as described in definition \ref{necktocap}, giving a domain $K^\#_{t_i}\subseteq K^i_{t_i}=:K^-_{t_i}$
    \item The initial time slice of the next flow $K^{i+1}_{t_i}=:K^+_{t_i}$, is obtained from $K^\#_{t_i}$ by discarding some connected components.
    \item There exists $s_\#=s_\#(\mathcal{K})>0$, which depends on $\mathcal{K}$, such that all necks in item (1) have radius $s\in [\mu^{-1/2}s_\#,\mu^{1/2}s_\#]$.
\end{enumerate}
\end{defn}

\begin{prop}[One-sided minimization,\text{\cite[Proposition 2.9]{hksurg}}]\label{separation}
There exists a $\overline{\delta}>0$ and $\Gamma_0<\infty$ with the following property. If $\mathcal{K}$ is an $(\alpha,\delta)$-flow ($\delta<\overline{\delta}$) in an open set $U$, with cap separation parameter $\Gamma\geq \Gamma_0$ and surgeries at scales between $\mu^{-1}s$ and $s$, and if $\overline{B} \subset U$ is a closed ball with $d(\overline{B}, \mathbb{R}^{n+1}\backslash U)\geq 20\Gamma s$, then \begin{align*}
    |\partial K_{t_1}\cap \overline{B}|\leq|\partial K'\cap \overline{B}|
\end{align*}
for every smooth comparison domain $K'$ that agrees with $K_1$ outside $\overline{B}$ and satisfies $K_{t_1}\subset K'\subset K_{t_0}$ for some $t_0<t_1$.
\end{prop}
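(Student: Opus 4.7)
The plan is to establish the inequality via a calibration argument: I would construct a Lipschitz vector field $X$, defined on a neighbourhood of $K_{t_0}\setminus \operatorname{int}(K_{t_1})$ inside $U$, satisfying (i) $|X|\le 1$ pointwise, (ii) $X = \nu_{K_{t_1}}$ along $\partial K_{t_1}$, and (iii) $\operatorname{div}(X)\ge 0$ in the distributional sense on $K_{t_0}\setminus K_{t_1}$. Once such $X$ is in hand, the set $\Omega := K' \setminus K_{t_1}$ is contained in $\overline B$ because $K'$ agrees with $K_{t_1}$ outside $\overline B$, and the divergence theorem applied to $\Omega$ gives
\[
0 \le \int_{\Omega} \operatorname{div}(X)\, dV = \int_{\partial K' \cap \overline B} X \cdot \nu_{K'}\, dS - \int_{\partial K_{t_1} \cap \overline B} X \cdot \nu_{K_{t_1}}\, dS,
\]
whose right-hand side is at most $|\partial K' \cap \overline B| - |\partial K_{t_1} \cap \overline B|$ by (i) and (ii). Rearranging yields the proposition.

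On any maximal time sub-interval $[\tau_i, \tau_{i+1}]$ during which no surgery meets $\overline B$, the flow $\{K_t\}$ is smooth and $\alpha$-noncollapsed, hence has strictly positive mean curvature, so the spatial annulus $K_{\tau_i}\setminus K_{\tau_{i+1}}$ is smoothly foliated by the leaves $\{\partial K_t\}$. Taking $X$ to be the outward unit normal along the leaf through each point yields $|X|=1$ and $\operatorname{div}(X) = H_{\partial K_t} > 0$; this is essentially the classical calibration underpinning White's outer-minimisation property for smooth mean-convex mean curvature flow.

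At each surgery time $\tau$ inside $\overline B$, the modification is confined to a single ball $B(p, 5\Gamma s)\subset U$, so the smooth-interval calibration extends unchanged through $\tau$ outside this ball. Inside $B(p, 5\Gamma s)$ I would construct $X$ by comparison with a rotationally symmetric model: by the $\delta$-neck hypothesis, $\partial K^-_\tau$ is $\delta$-close to the round cylinder $\mathbb{S}^{n-1}(s)\times\mathbb{R}$, and by Definition \ref{stdcap} the surface $\partial K^\#_\tau$ is $\delta$-close to two convex standard caps at distance $\Gamma s$ from $p$. On the cylindrical portion of the excised region, $X$ is the outward radial field from the cylinder axis, giving $|X|=1$ and $\operatorname{div}(X) = (n-1)/s > 0$. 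On the convex cap regions, $X$ is the outward unit normal, transported inward along normal lines, so that $|X|=1$ on $\partial K^\#_\tau$ and the divergence is nonnegative by convexity. In the transition annulus between cylindrical and cap regions, a smooth interpolation between the two model fields preserves both conditions up to $O(\delta)$ errors, which can be absorbed once $\overline\delta$ is small and $\Gamma_0$ is large; matching to the exterior calibration on $\partial B(p, 5\Gamma s)$ is automatic because both fields are $\delta$-close to the same cylindrical radial field there.

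\textbf{Main obstacle.} The delicate step is the construction of $X$ inside the surgery ball: the unit-norm bound and the sign of the divergence must survive the gluing between cylinder, transition, and cap while matching the smooth-flow calibration on $\partial B(p, 5\Gamma s)$. The transition region has axial length of order $\Gamma s$ but radial scale of order $s$, so turning the vector field from radial to normal-to-cap requires a large aspect ratio, which is the geometric origin of the threshold $\Gamma_0$. The $\delta$-neck closeness produces small corrections in both direction and magnitude of the true outward normal, so one must take $\delta < \overline\delta$ to absorb them. The distance hypothesis $d(\overline B, \mathbb{R}^{n+1}\setminus U) \ge 20\Gamma s$ guarantees that every surgery ball, together with a buffer for the gluing, lies well inside $U$, so the global calibration can be assembled in one piece.
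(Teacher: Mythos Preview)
The paper does not supply its own proof of this proposition; it is stated in Section~\ref{prelim} purely as a citation of \cite[Proposition~2.9]{hksurg}, with no accompanying argument. There is therefore nothing in the present paper to compare your proposal against.

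That said, your calibration strategy is the right framework and is essentially how the original Haslhofer--Kleiner argument runs: on smooth sub-intervals one uses the outward unit normal along the mean-convex foliation $\{\partial K_t\}$ (giving $\operatorname{div}X=H>0$), and across each surgery ball one patches in an explicit model field built from the rotational symmetry of the cylinder-to-caps replacement. Your identification of the transition annulus as the place where both $\overline\delta$ and $\Gamma_0$ are forced is accurate. One omission in your outline: an $(\alpha,\delta)$-flow also allows entire high-curvature components to be \emph{discarded} (Definition~\ref{alphadelta}(2)), and the interior of such a component lies in $K_{t_0}\setminus K_{t_1}$ without being swept by any foliation you have described. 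You would need to calibrate there separately---for instance by evolving the discarded component forward by smooth mean curvature flow until extinction and using the resulting foliation, which is legitimate since the component is $\alpha$-noncollapsed and mean-convex.
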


\begin{theorem}[Global Curvature Estimate \text{\cite[Theorem 1.10]{hksurg}}]\label{hkcurv}
For all $\Lambda<\infty$, there exists $\overline\delta(\alpha)>0, \xi=\xi(\alpha,\Lambda)<\infty $ and $C_0=C_0(\alpha,\Lambda)<\infty$ with the following property. If $\mathcal{K}$ is an $(\alpha,\delta)$-flow ($\delta<\overline \delta$) in a parabolic ball $P(p,t,\xi r)$ centred at $p\in \partial\mathcal{K}_t$ with $H(p,t)\leq r^{-1}$, then 
\begin{align*}
    \sup_{P(p,t,\Lambda r)\cap \partial \mathcal{K}'} |A|\leq C_0 r^{-1}
\end{align*}
where $\mathcal{K}'$ denotes the connected component of the flow containing $p$.
\end{theorem}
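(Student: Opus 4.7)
The plan is a standard blow-up and compactness argument in the style of Hamilton and Perelman, following the template of \cite{hkestimates, hksurg}. Fix $\Lambda<\infty$ and $\alpha>0$, and argue by contradiction: suppose no such $\xi$ and $C_0$ exist. Then there is a sequence of $(\alpha,\delta_j)$-flows $\mathcal{K}_j$ with $\delta_j\to 0$, basepoints $(p_j,t_j)\in\partial\mathcal{K}_{j,t_j}$, scales $r_j$, and radii $\xi_j\to\infty$, which after parabolic rescaling we normalise so that $r_j=1$ and $H(p_j,t_j)\leq 1$, while
\[\sup_{P(p_j,t_j,\Lambda)\cap\partial \mathcal{K}_j'}|A|\longrightarrow\infty.\]
I would first apply a Hamilton-style point-picking inside $P(p_j,t_j,\Lambda)\cap\partial\mathcal{K}_j'$ to extract ``worst'' points $q_j$ within the connected component, with $Q_j:=|A|(q_j)\to\infty$ and with $|A|\leq 2Q_j$ on parabolic neighbourhoods of $q_j$ of size $\omega_j Q_j^{-1}$, $\omega_j\to\infty$. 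Parabolically rescale $\mathcal{K}_j$ about $q_j$ by $Q_j$ to obtain $(\alpha,\delta_j)$-flows $\tilde{\mathcal{K}}_j$ with $|A|(0,0)=1$, bounded second fundamental form on arbitrarily large parabolic balls, and rescaled surgery scale $Q_j\cdot s_\#(\mathcal{K}_j)$.

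The second step is a compactness argument producing a limit flow. The $\alpha$-noncollapsing assumption, combined with the one-sided minimisation of Proposition \ref{separation}, yields uniform local area bounds on any fixed parabolic region. Together with the uniform curvature bound from point-picking, this gives smooth subsequential convergence on compact sets away from any surgery regions. Since $\delta_j\to 0$ and the cap separation parameter $\Gamma$ is fixed, any surgery neck surviving a blow-up must converge to a perfect round cylinder; such a configuration inside a region of uniformly bounded $|A|$ either gets pushed to infinity (if $Q_j s_\#(\mathcal{K}_j)\to\infty$) or gives an explicit model against which the standard-cap gluing of Definition \ref{necktocap} can be compared, both cases excluding a surviving surgery at the origin. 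One thus obtains a smooth, non-trivial, ancient, mean convex, $\alpha$-noncollapsed mean curvature flow $\tilde{\mathcal{K}}_\infty$ with $|A|(0,0)=1$.

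The third step derives the contradiction from the structure theory of ancient $\alpha$-noncollapsed mean curvature flows established in \cite{hkestimates}. Such flows satisfy global convexity and curvature estimates that are incompatible with the coexistence of $|A|=1$ at the origin and the asymptotic ``low curvature'' information inherited from the basepoints $(p_j,t_j)$. More precisely, one tracks the chain of canonical neighbourhoods connecting $q_j$ back to $p_j$ within $\partial\mathcal{K}_j'$: either a cylindrical or bowl-like structure propagates a uniform upper curvature bound backwards from $q_j$ to $p_j$, or, equivalently, a uniform lower curvature bound propagates forward from $p_j$ to $q_j$, ruling out $Q_j\to\infty$.

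The hardest part will be step two, namely ensuring that surgeries do not contaminate the blow-up limit. One must verify both that no sequence of surgery regions accumulates at the origin with unbounded curvature, and that area bounds descend to the limit in the face of recent surgeries near $q_j$. Both issues are exactly what Proposition \ref{separation} and the rigidity built into the standard-cap replacement of Definition \ref{necktocap} are designed to handle, and the assumption $\delta_j\to 0$ converts surviving necks to genuine cylinders in the limit, which together close the compactness argument and complete the proof.
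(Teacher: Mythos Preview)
The paper does not give its own proof of this statement: Theorem~\ref{hkcurv} appears in Section~\ref{prelim} as a direct quotation of \cite[Theorem~1.10]{hksurg}, listed among the background results from Haslhofer--Kleiner, with no proof supplied. There is therefore nothing in the present paper to compare your argument against.

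That said, your sketch is a faithful outline of the original Haslhofer--Kleiner strategy: contradiction via point-picking, compactness using $\alpha$-noncollapsing and one-sided minimisation, and a structure theorem for the ancient limit. The step you flag as hardest---ruling out surgeries contaminating the blow-up---is indeed the crux in \cite{hksurg}, handled there by the global convergence theorem for $(\alpha,\delta)$-flows. Your third step is slightly imprecise as written: the contradiction in \cite{hksurg} does not come from propagating a curvature bound along a chain back to $p_j$, but rather from the Harnack-type consequence that an ancient $\alpha$-noncollapsed flow with bounded curvature at one point has globally bounded curvature, which then conflicts with the normalisation $H(p_j,t_j)\le 1$ after one observes that $p_j$ lies at rescaled distance $\le \Lambda Q_j\to\infty$ from the origin while remaining in the same connected component. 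This is a minor wording issue; the underlying idea is correct.
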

\begin{remark}
 Of course, this extends to higher derivatives, $|\nabla^l A|$, as is standard for parabolic equations.
\end{remark}

\begin{defn}[$\Balpha$-controlled initial condition\text{\cite[Definition 1.15]{hksurg}}]  Let $\Balpha=(\alpha,\beta,\gamma)\in(0,n-1)\times (0,\frac1{n-1})\times(0,\infty)$. A hypersurface $M^n\subset\mathbb{R}^{n+1}$  is said to be $\Balpha$-controlled if it is
 $\alpha$-noncollapsed, $\beta$-uniformly 2-convex: $\lambda_1+\lambda_2\geq \beta H$ and $\text{max}_{x\in M}\{H(x)\}\leq \gamma$. 
\end{defn}
\begin{defn}
The surgery parameter $\mathbb{H}$ is defined as the triple \begin{align*}\mathbb{H}=\{H_\mathrm{th}, & \ H_\mathrm{neck}, H_\mathrm{trig}\}\in \mathbb{R}^3,\\ 0<H_\mathrm{th}< & \ H_\mathrm{neck}<H_\mathrm{trig}<\infty.
\end{align*} $H_\mathrm{trig}$ is the trigger curvature, once achieved the flow is stopped. $H_\mathrm{neck}$ is the mean curvature of neck points. $H_\mathrm{th}$ is the curvature that is used to determine high curvature regions of the flow.  For $\Theta<\infty$ we say $\mathbb{H}>\Theta$ if the ratios satisfy \begin{align*}
    \frac{H_\mathrm{neck}}{H_\mathrm{th}},\frac{H_\mathrm{trig}}{H_\mathrm{neck}}> \Theta
\end{align*}
We say the ratios degenerate along a sequence if these ratios tend to infinity.
\end{defn}
The definition of a mean curvature flow with surgery is made formal in the following definition.
\begin{defn}[$(\mathbb{\Balpha},\delta,\mathbb{H})$-flow \text{\cite[Definition 1.17]{hksurg}}]
Let $M^n\subset\mathbb{R}^{n+1}$ be an  $\Balpha=(\alpha,\beta,\gamma)$ controlled initial condition. An $(\Balpha,\delta, \mathbb{H})$-flow is an $(\alpha,\delta)$ flow such that:
\begin{enumerate}
    \item $H\leq H_\mathrm{trig}$ everywhere. Surgery and/or discarding occurs precisely at times $t$ when $H=H_\mathrm{trig}$ somewhere.
    \item The collection of necks in Definition \ref{alphadelta} (1) is a minimal collection of necks with curvature $H=H_\mathrm{neck}$ which separate the set $\{ H=H_\mathrm{trig}\}$ from $\{H\leq H_\mathrm{th}\}$ in the domain $K^-_t$.
    \item $K^+$ is obtained from $K^\#_t$ by discarding precisely those connected components with $H>H_\mathrm{th}$ everywhere. In particular, of each pair of facing surgery caps, precisely one is discarded.
    \item If a strong $\delta$-neck from item (2) is also a strong $\hat{\delta}$-neck for $\hat{\delta}<\delta$ then definition \ref{alphadelta} (4) also holds with $\hat{\delta}$ instead of $\delta$.
\end{enumerate}

\end{defn}

The above theory is then used to prove existence of the flow, provided one is replacing strong enough necks (controlled by $\overline{\delta}$) that are sufficiently long (controlled by $\Theta$ and the curvature estimates).
\begin{theorem}[Existence of mean curvature flow with surgery, \text{\cite[Theorem 1.21]{hksurg}}]\label{hkexist}
There are constants $\overline{\delta}=\overline{\delta}(\Balpha)>0$ and $\Theta(\delta)=\Theta(\Balpha,\delta)<\infty$ ($\delta\leq\bar{\delta}$) with the following significance.
If $\delta\leq\bar{\delta}$ and $\mathbb{H}=(H_{\textrm{trig}},H_{\textrm{neck}},H_{\textrm{th}})$ are positive numbers with
${H_{\textrm{trig}}}/{H_{\textrm{neck}}},{H_{\textrm{neck}}}/{H_{\textrm{th}}}\geq \Theta(\delta)$,
then there exists an $(\Balpha,\delta,\mathbb{H})$-flow $\{K_t\}_{t\in[0,\infty)}$ for every $\Balpha$-controlled initial condition $K_0$.

\end{theorem}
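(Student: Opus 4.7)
The plan is to construct the flow iteratively, alternating smooth mean curvature flow with discrete surgery steps. Starting from $K_0$, I would run the classical smooth flow: Andrews' maximum principle preserves $\alpha$-noncollapsedness and Hamilton's tensor maximum principle preserves $\beta$-uniform 2-convexity, so $\Balpha$-control persists (with $\alpha,\beta$ fixed and $\gamma$ possibly growing) until the first time $t_1$ at which $\max_{K_t} H = H_\mathrm{trig}$. At $t_1$ we need to perform a surgery and then restart.

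The key analytic input is a \emph{canonical neighbourhood theorem} for $\Balpha$-controlled flows: for $\mathbb{H}\geq\Theta(\Balpha,\delta)$ sufficiently large, every space-time point with $H\geq H_\mathrm{th}$ either lies in a convex piece with uniformly controlled geometry or is the centre of a strong $\delta$-neck. I would prove this by a blow-up/contradiction argument built on the global curvature estimate, Theorem \ref{hkcurv}: any sequence of hypothetical bad points, rescaled by $H$, subconverges smoothly to an ancient, $\alpha$-noncollapsed, $\beta$-uniformly 2-convex Brakke flow, and the classification of such ancient flows forces the limit to be either strictly convex or the round shrinking cylinder---contradiction. Having this dichotomy, at $t_1$ on each connected component of $\{H\geq H_\mathrm{th}\}$ one either discards the whole component (when it is covered by convex neighbourhoods and so satisfies $H>H_\mathrm{th}$ everywhere) or finds, using the intermediate scale $H_\mathrm{neck}$, a minimal finite collection of disjoint strong $\delta$-necks separating $\{H=H_\mathrm{trig}\}$ from $\{H\leq H_\mathrm{th}\}$ inside $K^-_{t_1}$. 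Performing the cap replacement of Definition \ref{necktocap} on each such neck and then discarding the components with $H>H_\mathrm{th}$ throughout produces $K^+_{t_1}$.

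The main obstacle is verifying that $K^+_{t_1}$ is again $\Balpha$-controlled, so that the iteration can be repeated without any degradation of constants. Preservation of $\beta$-uniform 2-convexity is comparatively benign, since it follows from the known 2-convexity of the standard cap evolution (\cite[Proposition 3.8]{hksurg}) together with Hamilton's tensor maximum principle applied in each smooth piece. Preservation of $\alpha$-noncollapsedness is the hard part: a naive cap gluing could in principle create small exterior balls or shallow interior pockets, destroying the inscribed/circumscribed ball bounds. This is handled by combining Proposition \ref{separation} (one-sided minimization, which forces $\partial K^+_{t_1}$ to compete against smooth sweep-outs on either side) with the explicit noncollapsing of the standard cap solution. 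Together these show that the noncollapsing constant degrades by a multiplicative factor that can be made arbitrarily close to $1$ by shrinking $\delta$. A telescoping argument over all potential future surgeries then lets us choose $\bar\delta(\Balpha)$ so that the noncollapsing constant stays above some universal $\alpha'=\alpha'(\Balpha)>0$ throughout the construction.

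Finally, existence on $[0,\infty)$ follows from a volume-accounting argument. Each neck surgery removes volume at least $c(\Balpha)\, H_\mathrm{neck}^{-(n+1)}$, and each discarded component, being covered by convex canonical neighbourhoods with $H>H_\mathrm{th}$, has volume at most $C(\Balpha)\, H_\mathrm{th}^{-(n+1)}$. Both quantities are positive and independent of time, while the volume enclosed by $K_t$ is bounded above by the initial volume, itself controlled through $\gamma$. Hence only finitely many surgery-and-discard events can occur before $K_t$ becomes empty, after which the flow is trivially extended to all $t\in[0,\infty)$ by the empty set, yielding the desired $(\Balpha,\delta,\mathbb{H})$-flow.
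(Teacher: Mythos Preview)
The paper does not prove this statement: Theorem~\ref{hkexist} appears in Section~\ref{prelim} (Preliminaries) as a direct citation of \cite[Theorem~1.21]{hksurg}, with no proof given. It is background material imported from Haslhofer--Kleiner, so there is no ``paper's own proof'' to compare your proposal against.

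That said, your sketch is a reasonable summary of the Haslhofer--Kleiner argument, with one point worth flagging. You describe $\alpha$-noncollapsedness as degrading multiplicatively at each surgery and then use a telescoping argument to keep the constant above some $\alpha'>0$. In the actual Haslhofer--Kleiner setup this is not how it works: the class of $(\alpha,\delta)$-flows is defined with a \emph{fixed} $\alpha$, and the point is that the replacement procedure (Definition~\ref{necktocap}), together with the choice of standard cap (Definition~\ref{stdcap}) and the one-sided minimization (Proposition~\ref{separation}), keeps the post-surgery domain inside the same class---no degradation, no telescoping. If the constant genuinely shrank by a fixed factor at every surgery, your argument would need an a priori bound on the number of surgeries \emph{before} establishing the finiteness, which is circular. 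Your final volume-counting step for finiteness is fine in spirit, though in the mean-convex setting one usually just observes that the enclosed region is nested and the flow is trapped inside a shrinking sphere, so extinction in finite time is automatic; the volume drop per surgery then bounds the number of surgery times.
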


Additionally, a canonical neighbourhood theorem is proved.
\begin{theorem}[Canonical Neighbourhood Theorem, \text{\cite[Theorem 1.22]{hksurg}}]\label{hkcanon} 
For all $\varepsilon>0$, there exist $\overline{\delta}=\overline{\delta}(\Balpha)>0$, $H_{\textrm{can}}(\varepsilon)=H_{\textrm{can}}(\Balpha,\varepsilon)<\infty$ and $\Theta_\varepsilon(\delta)=\Theta_\varepsilon(\Balpha,\delta)<\infty$ ($\delta\leq\bar{\delta}$) with the following significance.
If $\delta\leq\overline{\delta}$ and $\mathcal{K}$ is an $(\Balpha,\delta,\mathbb{H})$-flow with ${H_{\textrm{trig}}}/{H_{\textrm{neck}}},{H_{\textrm{neck}}}/{H_{\textrm{th}}}\geq \Theta_\varepsilon(\delta)$,
then any $(p,t)\in\partial \mathcal{K}$ with $H(p,t)\geq H_{\textrm{can}}(\varepsilon)$ is $\varepsilon$-close to either
(a) a $\beta$-uniformly $2$-convex ancient $\alpha$-noncollapsed flow,
or (b) the evolution of a standard cap preceded by the evolution of a round cylinder.
\end{theorem}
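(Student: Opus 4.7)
The plan is to argue by contradiction, using parabolic rescaling about a would-be bad point together with the global curvature estimate (Theorem~\ref{hkcurv}), a compactness theorem for smooth $\alpha$-noncollapsed flows, and a case split based on whether surgery scales survive in the rescaled limit. Suppose no finite $H_\mathrm{can}(\varepsilon)$ works: then for fixed $\delta \leq \overline{\delta}$ there are $(\Balpha,\delta,\mathbb{H}^i)$-flows $\mathcal{K}^i$ with $H^i_\mathrm{trig}/H^i_\mathrm{neck},\, H^i_\mathrm{neck}/H^i_\mathrm{th} \to \infty$ and points $(p_i,t_i)\in \partial\mathcal{K}^i$ with $H(p_i,t_i)=:r_i^{-1}\to\infty$, none $\varepsilon$-close to (a) or (b). Parabolically rescale by $r_i^{-1}$ to form $\widetilde{\mathcal{K}}^i=\mathcal{D}_{r_i^{-1}}(\mathcal{K}^i-(p_i,t_i))$, so $\widetilde H(0,0)=1$, the rescaled trigger satisfies $\widetilde H^i_\mathrm{trig}\geq 1$, and noncollapsing, $\beta$-uniform 2-convexity, and the degeneracy of the ratios all persist.

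Next apply Theorem~\ref{hkcurv} on the component of $\widetilde{\mathcal{K}}^i$ through the origin: for any $\Lambda<\infty$ one obtains $|A|\leq C_0(\alpha,\Lambda)$ on $P(0,0,\Lambda)$ for all large $i$, with analogous bounds on derivatives. Combined with the $\alpha$-noncollapsed compactness theorem of Haslhofer--Kleiner and Arzel\`a--Ascoli, after diagonal extraction the component of $\widetilde{\mathcal{K}}^i$ through the origin converges smoothly on compact subsets of space-time away from the accumulating surgery set to an ancient limit flow $\widetilde{\mathcal{K}}^\infty$ that is $\alpha$-noncollapsed and $\beta$-uniformly 2-convex. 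Running time goes back to $-\infty$ because the parabolic neighborhood of the origin has rescaled radius $\to\infty$.

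The behavior of surgery in this limit is controlled by $\widetilde H^i_\mathrm{neck}=r_i H^i_\mathrm{neck}$, which is $\leq \widetilde H^i_\mathrm{trig}/\Theta^i$. Pass to a subsequence so that $\widetilde H^i_\mathrm{neck}\to h\in[0,\infty]$; since each basepoint has $H(p_i,t_i)=r_i^{-1}\leq H^i_\mathrm{trig}$, the possibility $h=\infty$ is ruled out by the ratio degeneracy in a neighborhood of curvature scale $1$. In case \textbf{(a)} $h=0$: any surgery in $P(0,0,\Lambda)$ would occur at rescaled curvature $\widetilde H^i_\mathrm{neck}\to 0$, while the global estimate forces the basepoint component to carry curvature of order $1$ in a definite neighborhood; hence no surgery accumulates near $(0,0)$, the limit $\widetilde{\mathcal{K}}^\infty$ is a smooth ancient $\alpha$-noncollapsed $\beta$-uniformly $2$-convex flow, and for large $i$ the flow is $\varepsilon$-close to it, contradicting the choice of $(p_i,t_i)$. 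In case \textbf{(b)} $h\in (0,\infty)$: a surgery persists at time $0$ at definite scale, and using the strong $\delta$-neck assumption (pre-surgery the flow is $\delta$-close to a round cylinder), the cap replacement prescription of Definition~\ref{necktocap} items (1)-(4) with cap separation $\Gamma\geq\Gamma_0$, and the one-sided minimization of Proposition~\ref{separation} to prevent spurious additional boundary components, one identifies $\widetilde{\mathcal{K}}^\infty$ with the evolution of a standard cap preceded by the evolution of a round cylinder (uniqueness per \cite[Proposition~3.8]{hksurg}), again contradicting non-closeness for large $i$.

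The main obstacle is Case (b): one must pass the surgery construction to the limit precisely, keeping the cap geometry under uniform $C^\infty$ control through the discrete modification. This requires the strong $\delta$-neck condition (ensuring $C^{\lfloor 1/\delta\rfloor}$-closeness of the pre-surgery region to an exact cylinder), the $|\nabla^\ell A|$ bounds on the post-surgery cap from Definition~\ref{necktocap}(2), and the $\delta$-closeness of $s^{-1}(K^\#-p)$ to a pair of standard caps from Definition~\ref{necktocap}(4); together with the uniqueness of the standard-cap evolution, these are enough to force the limit to be exactly the model described in (b), provided $\Theta_\varepsilon(\delta)$ is chosen large enough to drive the approximation errors below $\varepsilon$.
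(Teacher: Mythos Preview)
This theorem is stated in the paper's preliminaries (Section~\ref{prelim}) as a direct quotation of \cite[Theorem~1.22]{hksurg}; the paper does not supply a proof of it there --- it is recorded purely as background from Haslhofer--Kleiner. When the paper later restates the canonical neighbourhood theorem for its own weak $(\Balpha,\delta,\mathbb{H})$-flows in Section~5, the entire proof reads ``The proof is identical, for we only do surgery in 2-convex connected components,'' i.e.\ it again defers to \cite{hksurg}. So there is no paper-proof to compare your proposal against.

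That said, your sketch is a faithful outline of the actual Haslhofer--Kleiner argument: contradiction, parabolic rescaling at a bad high-curvature point so that $H=1$ at the origin, the global curvature estimate (Theorem~\ref{hkcurv}) for uniform $|A|$-bounds on growing parabolic balls, smooth compactness for $\alpha$-noncollapsed flows, and a dichotomy according to whether surgeries persist near the rescaled origin. Two small imprecisions worth flagging. First, the clean case split is ``do surgery modifications accumulate in a fixed parabolic neighbourhood of $(0,0)$ or not,'' rather than a trichotomy on $h=\lim\widetilde H^i_{\mathrm{neck}}$; in particular your exclusion of $h=\infty$ is not quite argued correctly --- the point is that if $\widetilde H^i_{\mathrm{neck}}\to\infty$ then any neck centre in $P(0,0,\Lambda)$ would violate the curvature bound $C_0$ from Theorem~\ref{hkcurv}, so no surgeries lie there and one falls back into case~(a). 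Second, in your case~(b) the hypothesis $h\in(0,\infty)$ does not by itself guarantee that a surgery actually occurs near the origin; one must assume (after passing to a subsequence) that surgeries do accumulate there, and then the strong $\delta$-neck structure together with Definition~\ref{necktocap}(2),(4) and the uniqueness of the standard-cap evolution yield the model~(b) as you describe. With those adjustments the argument is the standard one.
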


A consequence of the canonical neighbourhood theorem is the classification of discarded components. This result allows one to use surgery to decompose the topology of the original hypersurface.
\begin{theorem}[Discarded components, \text{\cite[Corollary 1.25]{hksurg}}]\label{discarded}
For $\varepsilon>0$ small enough, for any $(\Balpha,\delta,\mathbb{H})$-flow with $H_\mathrm{neck}/H_\mathrm{th},H_\mathrm{trig}/H_\mathrm{neck}>\Theta_\varepsilon(\delta)$, and $H_\mathrm{th}>H_\mathrm{can}(\varepsilon)$, all discarded components are diffeomorphic to $\overline D^{n+1}$ or $\overline D^{n}\times \mathbb{S}^1$.
\end{theorem}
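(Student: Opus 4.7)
The plan is to leverage the canonical neighbourhood theorem (Theorem \ref{hkcanon}) to impose enough geometric rigidity on the boundary hypersurface of each discarded component $C$ so that its topology is forced to be that of a sphere or a product $\mathbb{S}^{n-1}\times\mathbb{S}^1$; from there the solid region is identified.

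First, by the definition of the discarding step, every point of $\partial C$ satisfies $H>H_\mathrm{th}>H_\mathrm{can}(\varepsilon)$, so Theorem \ref{hkcanon} applies pointwise: each $(p,t)\in\partial C$ is $\varepsilon$-close to a model flow that is either (a) an ancient $\beta$-uniformly $2$-convex $\alpha$-noncollapsed flow, or (b) a standard cap preceded by a cylinder. Next I would invoke the classification of such ancient flows (shrinking sphere, shrinking cylinder, bowl soliton, or compact ancient oval) together with the structure of standard caps to split $\partial C$ into two open regions: a \emph{neck region} $\mathcal{N}$ consisting of points $\varepsilon$-close to a piece of round cylinder, and a \emph{cap region} $\mathcal{C}$ consisting of points whose model is nearly spherical or nearly a bowl/oval/standard-cap head. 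For $\varepsilon$ chosen small enough this dichotomy can be made disjoint by comparing the smallest principal curvature ratio $\lambda_1/H$ to a fixed threshold.

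Then I would analyse the local structure of each region. On $\mathcal{N}$ the cylindrical approximation gives, at each point, a well-defined neck axis direction that varies continuously; this yields a smooth unit vector field on $\mathcal{N}$ whose integral curves foliate $\mathcal{N}$ by arcs transverse to $(n-1)$-spheres of near-constant $H$. On $\mathcal{C}$ each connected component is diffeomorphic to a closed disk $\overline{D}^n$ whose boundary is a near-round $(n-1)$-sphere that matches the cross-section of an adjacent neck (or the component is a whole $\mathbb{S}^n$, in which case $C$ is already a topological ball). Gluing neck and cap components along these matching $(n-1)$-spheres produces a graph $G$ whose vertices are cap components (valence $1$) and whose edges are maximal neck pieces (valence $2$ at interior endpoints). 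A connected graph with only valences $1$ and $2$ is either an interval or a cycle, giving $\partial C\cong \mathbb{S}^n$ or $\partial C\cong \mathbb{S}^{n-1}\times\mathbb{S}^1$ respectively.

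Finally, to conclude that the solid component $C$ is $\overline D^{n+1}$ or $\overline D^n\times\mathbb{S}^1$, I would thicken the neck/cap decomposition into the bulk using the fact that each neck region extends into a solid cylindrical collar and each cap into a solid ball, and glue accordingly; the Schoenflies theorem handles the $\mathbb{S}^n$ case and the solid-cylinder gluing handles the $\mathbb{S}^{n-1}\times\mathbb{S}^1$ case. The main obstacle, in my view, is the technical step of promoting the purely infinitesimal canonical-neighbourhood dichotomy to a genuine open cover by compatible neck and cap charts whose overlap maps preserve the cylinder axis direction; this requires choosing $\varepsilon$ small enough that the derivative control from Theorem \ref{hkcurv} forces matching neck axes on overlaps, and that no point can be simultaneously $\varepsilon$-close to both model types.
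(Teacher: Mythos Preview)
Your sketch is essentially correct and matches the argument in \cite{hksurg}; note, however, that the present paper does not actually prove this statement. It appears in Section~\ref{prelim} as part of the recap of Haslhofer--Kleiner's 2-convex surgery, and the later restatement (Theorem~\ref{dropped}) simply says ``the argument is identical to that in \cite{hksurg}''. So there is no independent proof here to compare against beyond the citation, and your outline is precisely the standard neck--cap decomposition argument that \cite{hksurg} carries out.

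One caution on your final step: invoking the Schoenflies theorem to pass from $\partial C\cong\mathbb{S}^n$ to $C\cong\overline D^{n+1}$ is both unnecessary and, in ambient dimension $4$ (i.e.\ $n=3$), unavailable in the smooth category. The correct route---and the one you in fact already outlined in the sentence just before---is to work with the solid pieces directly: each cap region is (close to) convex and hence bounds a solid ball by elementary means, each neck region bounds a solid cylinder $\overline D^n\times I$ by the $\alpha$-noncollapsing, and gluing these solid pieces along their common $\overline D^n$ faces produces the diffeomorphism to $\overline D^{n+1}$ or $\overline D^n\times\mathbb{S}^1$ explicitly. No appeal to Schoenflies is needed; indeed, the whole point of the application in Section~6 is that this surgery decomposition \emph{establishes} a Schoenflies-type statement rather than assuming one.
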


\section{Definitions for Local Surgery}

Let $\mathcal{M}$ be an $n$-dimensional unit-regular, cyclic (mod 2) integral Brakke flow that encounters only multiplicity one spherical or neck-pinch singularities, evolving from the smoothly embedded, closed hypersurface $M^n\subset\mathbb{R}^{n+1}$. We will always presume these singularities are multiplicity one. We fix a neck separation parameter $\Gamma_0$ that satisfies the conclusions of Proposition \ref{separation}, and a $\bar\delta>0$ that satisfies the conclusions of Theorem \ref{hkexist} and Theorem \ref{hkcanon}.

All of the above definitions for surgery make use of the `fattened' flow, where at each time $K_t$ is defined to be the set such that the boundary $\partial K_t = M_t$ is the motion by mean curvature from the initial hypersurface $M$. Since the flow is mean convex, the direction of flow is always into such a $K$.

With no assumption on the initial mean curvature, $\mathcal{M}$ can have `outward' necks, where the mean curvature vector (direction of flow) is pointing exterior to the compact set the hypersurface bounds. Observe, however, that the mean convex neighbourhood conjecture gives a neighbourhood of the singularity in which the mean curvature vector always points in the same direction. Recall, we are considering Brakke flows that are cyclic (mod 2), so the ambient $\mathbb{R}^{n+1}$ is  separated (at almost every time) into two components by the support of the Brakke flow. Let $\Omega$ be a set such that $\mathcal{M}\cap \Omega$ is 2-convex. Observe, this gives a `local orientation' in the following sense. We say the set $K_t$, with $\partial K_t\backslash\partial \Omega =M_t\cap \Omega$ is the local interior if $\mathbf{H}$ points into $K_t$. 

We use the same definition for the local interior of a surgery flow. Such a definition will be shown to be well defined in the definition of our flow with surgery.

\begin{defn}[Neck replacement]\label{genericnecktocap}
We localize definition \ref{necktocap} by using the above `local interior' $K_t$ as opposed to the interior of the entire flow.
\end{defn}
\begin{remark}
In this local sense, we still have the chain of inclusions 
\begin{align*}
    K^+_{t_i}\subseteq K^\#_{t_i}\subseteq K^-_{t_i}
\end{align*}
This is important for lemma \ref{barriers} in order to replicate the argument of \cite{lauer}.

Note, we will not have this sequence of inclusions for the interior of the surgery flow. Such a statement would not be true for outward necks: the caps are glued inside the solid neck, which equates to being exterior of the pre-surgery hypersurface.
\end{remark}

\begin{defn}[Locally $\alpha$-noncollapsed]\label{localandrews}
Let $M^n\subset\mathbb{R}^{n+1}$ be a smooth, closed hypersurface bounding the region $\Omega$. Suppose $M$ is mean convex in the open balls $B(\mathbf{y},2r)$. We say $M$ is \textit{locally $\alpha$-noncollapsed in $B(\mathbf{y},r)$} if
\begin{enumerate}[(a)]
\item $H(\mathbf{x})>1/r$ for $x\in M \cap B(\mathbf{y},r)$, and 
\item There is an $\alpha>0$ such that the balls $B_\mathrm{int}\subset \overline\Omega$ and $B_\mathrm{ext}\subset \mathbb{R}^{n+1}\backslash\Omega$ of radius $\alpha/H(\mathbf{x})$ situated either side of the hypersurface, with $x\in\partial B_\mathrm{int},\partial B_\mathrm{ext}$, are contained in $B(\mathbf{y},2r)$ and each ball has no intersection with $M\cap B(\mathbf{y},2r)$.
\end{enumerate}
\end{defn}

Examining the structure of the singular set of the flow $\mathcal{M}$, we can start to build the definitions for a more general surgery.
\begin{defn}
We denote the singular set of $\mathcal{M}$ as $\mathfrak{S}$.
\end{defn}

We recall the canonical neighbourhood theorem of  \cite{chh18, chhw19}.

\begin{theorem}[Canonical Neighbourhoods \text{\cite[Corollary 1.18]{chhw19}}]\label{canonical}
 Assume $X\in\mathfrak{S}$ is a neck singularity of the flow. Then for every $\delta>0$ there exists a $R(X,\delta)>0$ with the following significance. For any regular point $X'\in P(X,R)$ the flow $\mathcal{M}'=\mathcal{D}_{\lambda}(\mathcal{M}-X')$, obtained by parabolically rescaling the original flow around $X'$ by $\lambda=|\mathbf{H}(X')|$, is $\delta$-close in $C^{\lfloor 1/\delta \rfloor}$ in $B_{1/\delta}(0)\times(-1/\delta^2,0]$ to a round shrinking sphere, round shrinking cylinder, a translating bowl soliton or ancient oval.
\end{theorem}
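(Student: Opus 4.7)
The plan is to proceed by contradiction, reducing to a blow-up argument that hinges on three inputs: the mean convex neighbourhood theorem, compactness for $\alpha$-noncollapsed flows, and the classification of ancient $2$-convex noncollapsed flows. Suppose the conclusion fails at some $\delta_0 > 0$. Then there is a sequence of regular points $X_i' \in P(X, R_i)$ with $R_i \to 0$ such that the rescaled flows $\mathcal{M}_i := \mathcal{D}_{\lambda_i}(\mathcal{M} - X_i')$, where $\lambda_i := |\mathbf{H}(X_i')|$, fail to be $\delta_0$-close in $C^{\lfloor 1/\delta_0\rfloor}$ on $B_{1/\delta_0}(0) \times (-1/\delta_0^2, 0]$ to any of the four listed model flows.

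The first step is to invoke the mean convex neighbourhood theorem of \cite{chh18} (for $n=2$) and \cite{chhw19} (for $n\geq 3$): since $X$ is a neck singularity, there is a parabolic neighbourhood $P(X, r_0)$ in which $\mathcal{M}$ is mean convex, and in fact $\beta$-uniformly $2$-convex and $\alpha$-noncollapsed for some $\alpha, \beta > 0$ depending on $X$. For large $i$, the point $X_i'$ lies in this neighbourhood, so each rescaled flow $\mathcal{M}_i$ is smooth at the spacetime origin, satisfies $|\mathbf{H}| = 1$ there, and inherits the same $2$-convexity and $\alpha$-noncollapsing bounds (which are scale invariant). Because $X_i' \to X$ and $X$ is singular, we have $\lambda_i \to \infty$, so the parabolic region in which $\mathcal{M}_i$ is defined exhausts all of $\mathbb{R}^{n+1} \times (-\infty, 0]$.

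The second step is compactness. Using the local curvature estimates for $\alpha$-noncollapsed flows from \cite{hkestimates} together with the pointwise bound $|\mathbf{H}|(0) = 1$, the sequence $\mathcal{M}_i$ enjoys uniform curvature estimates on compact parabolic neighbourhoods of the origin. Passing to a subsequence, $\mathcal{M}_i \to \mathcal{M}_\infty$ smoothly on compact subsets, where $\mathcal{M}_\infty$ is an ancient, smooth, $\beta$-uniformly $2$-convex, $\alpha$-noncollapsed mean curvature flow that is nontrivial (since $|\mathbf{H}|(0) = 1$).

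The final step is the classification: the Brendle--Choi theorem in dimension $n=2$ and the Angenent--Daskalopoulos--Sesum / Brendle--Choi--Sesum classification in dimensions $n \geq 3$ imply that any such ancient flow must be one of the four listed models, namely a round shrinking sphere, a round shrinking cylinder, a translating bowl soliton, or an ancient oval. Smooth convergence then forces $\mathcal{M}_i$ to be $\delta_0$-close to this limit on $B_{1/\delta_0}(0) \times (-1/\delta_0^2, 0]$ for all sufficiently large $i$, contradicting the choice of the sequence $X_i'$. The main obstacle in this outline is invoking the classification, which is the deep input being used, but it is a cited black box; the geometric work is confined to verifying that the blow-up scheme produces an ancient $2$-convex noncollapsed flow, which the mean convex neighbourhood theorem makes essentially automatic.
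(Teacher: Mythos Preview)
The paper does not supply its own proof of this statement; it is quoted directly from \cite[Corollary 1.18]{chhw19} and used as a black box input. Your outline is essentially the argument given in that reference: contradiction plus blow-up, with the mean convex neighbourhood theorem furnishing the $\alpha$-noncollapsing and $2$-convexity needed to invoke the Haslhofer--Kleiner global curvature estimates, and the classification of ancient noncollapsed $2$-convex flows (Brendle--Choi, Angenent--Daskalopoulos--Sesum) identifying the limit.

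One small point worth tightening: the claim that $\lambda_i = |\mathbf{H}(X_i')| \to \infty$ is not immediate from $X_i' \to X$ alone. It follows because if $\lambda_i$ stayed bounded along a subsequence, the $\alpha$-noncollapsed curvature estimates would give a uniform curvature bound in a fixed parabolic neighbourhood of $X$, forcing $X$ to be a regular point and contradicting $X \in \mathfrak{S}$. With that justification inserted, your sketch is sound and matches the cited source.
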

Motivated by this theorem, we define the following open neighbourhood of the singular set of the flow $\mathcal{M}$.

 \begin{defn}[$(\alpha,\beta)$-neighbourhood] \label{abnbhd}
 We fix \begin{flalign*} 
   &(i) \ \alpha>0, \textrm{with }  \alpha<\min\{\alpha_\text{sphere},\alpha_\text{cylinder},\alpha_\text{bowl},\alpha_\text{oval}\},&\\
   & (ii) \ \beta>0, \textrm{with }  0<\beta<\text{min}\{\beta_\text{sphere},\beta_\text{cylinder},\beta_\text{bowl},\beta_\text{oval}\}, &\\
   &(iii) \ \gamma>0.
  \end{flalign*} 
  
 Here $\alpha_\text{sphere},\alpha_\text{cylinder},\alpha_\text{bowl},\alpha_\text{oval}$ and $\beta_\text{sphere},\beta_\text{cylinder},\beta_\text{bowl},\beta_\text{oval}$ are the respective optimal $\alpha>0$ and $\beta>0$ for the shrinking sphere, cylinder, translating bowl and ancient oval.

  Let $\Balpha=(\alpha, \beta, \gamma)$. Let $M^n\subset\mathbb{R}^{n+1}$ be a hypersurface with $|A|<\gamma$ and suppose $\mathcal{M}$ is a unit-regular, cyclic (mod 2) integral Brakke flow starting from $M$ then encounters only (multiplicity-one) spherical and neck-pinch singularities. We fix an additional constant $H_\mathrm{bdd}=H_\mathrm{bdd}(\Balpha)$. An $(\alpha,\beta)$-neighbourhood, $\Omega_{(\alpha,\beta)}$, is an  open space-time neighbourhood of the singular set $\mathfrak{S}$, composed of finitely many connected components, with the following properties. 
  \begin{enumerate}[$(i)$]
      \item For every regular point $X\in\mathcal{M}\cap\Omega_{(\alpha,\beta)}$, $|H(X)|> H_\mathrm{bdd}$. 
      \item If $X \in \mathcal{M}\cap\partial\Omega_i$, where $\Omega_i$ is a connected component of $\Omega_{(\alpha,\beta)}$, we require $|H(X)|=H_\mathrm{bdd}$.
      \item Furthermore, if $X \in \mathcal{M}\cap\partial\Omega_i$, then the flow is $\beta$-uniformly 2-convex in $P(X,2 \xi (|H(X)|)^{-1})$ and locally $\alpha$-noncollapsed in $P(X, \xi(|H(X)|)^{-1})$. 
      \item $\mathcal{M}$ is locally $\alpha$-noncollapsed in $\Omega_{(\alpha,\beta)}$ at regular points.
      \item $\mathcal{M}$ is $\beta$-uniformly 2-convex in $\Omega_{(\alpha,\beta)}$ at regular points.
      
  \end{enumerate}

  The value of $\xi=\xi(\alpha,\Lambda)$ is that given by the curvature estimates of Haslhofer--Kleiner, and depends on some $\Lambda$, which will be derived later.
  
  \begin{remark}
   Observe, the mean curvature is uniform across the boundary. 
     \end{remark}
  
  \begin{remark}\label{bddremark}
  The choice to have constant mean curvature along the boundary serves a practical purpose. 
  Later, we will specify surgeries in a flow approximating $\mathcal{M}$ only occur as long as said flow is a small graph over $\mathcal{M}$ in some neighbourhood of the boundary. We will show knowledge of the boundary data of $\mathcal{M}$ in the above fashion guarantees in the flows with surgery, via the maximum principle, that the hypotheses of the curvature estimates (Theorem \ref{hkcurv}) are satisfied in the interior.
 To be explicit, at interior points $X$, the flow in $P(X,\xi(|H(X)|)^{-1})$ will be an $(\alpha,\delta)$-flow in the sense of \cite{hksurg}.
  \end{remark}

  \end{defn}

  \begin{lemma}Let $\mathcal{M}$ be a Brakke flow with only spherical and neck-pinch singularities. For every $\Balpha$ as in Definition \ref{abnbhd}, there is a  $H_0(\Balpha, \mathcal{M})<\infty$ such that for all $H_\mathrm{bdd}>H_0$ an $(\alpha,\beta)$-neighbourhood exists.
  \end{lemma}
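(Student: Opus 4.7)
The plan is to apply the canonical neighbourhood theorem (Theorem \ref{canonical}) at each singularity of $\mathcal{M}$, extract a finite parabolic subcover, and then realise $\Omega_{(\alpha,\beta)}$ as a spacetime thickening of the super-level set $\{|H| > H_\mathrm{bdd}\}$ in the regular part of $\mathcal{M}$.

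First I would choose a closeness parameter $\delta_0 = \delta_0(\alpha, \beta, \xi) > 0$ small enough that any flow $\delta_0$-close in $C^{\lfloor 1/\delta_0 \rfloor}$ on $B_{1/\delta_0} \times (-1/\delta_0^2, 0]$ to one of the four model flows (shrinking sphere, shrinking cylinder, translating bowl, or ancient oval) is, on the rescaled parabolic ball $P(0,3\xi)$, both $\alpha$-noncollapsed and $\beta$-uniformly 2-convex. This is possible since each of the four model flows has optimal constants strictly larger than $\alpha$ and $\beta$, and both properties are preserved by $C^2$-convergence with a small buffer. I would also insist $\delta_0 < 1/(3\xi)$, so that the closeness radius covers the parabolic balls of radius $2\xi/|H|$ needed in property $(iii)$.

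Applying Theorem \ref{canonical} at each $X \in \mathfrak{S}$ with this $\delta_0$ yields radii $R(X) > 0$; the compactness of $\mathfrak{S}$ (the Brakke flow has compact initial datum and finite extinction time $T$) produces a finite subcover $\{P(X_k, R_k/2)\}_{k=1}^N$, whose union I call $V$. On $V$, the $\delta_0$-closeness of the rescaling at every regular point to a model immediately gives $(iv)$ and $(v)$, and the choice $\delta_0 < 1/(3\xi)$ gives $(iii)$ at boundary points. To fix $H_0$, note that $\mathrm{supp}(\mathcal{M}) \cap ([0, T] \times \mathbb{R}^{n+1}) \setminus V$ is a compact subset of $\mathrm{reg}\,\mathcal{M}$, so $|H|$ is bounded there by some $H_0' < \infty$; I would set $H_0 := H_0'$. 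Then for any $H_\mathrm{bdd} > H_0$ the super-level set $S := \{X \in \mathrm{reg}\,\mathcal{M} : |H(X)| > H_\mathrm{bdd}\}$ lies inside $V$.

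To realise $\Omega_{(\alpha,\beta)}$ I would extend $|H|$ to a continuous function $\hat H$ on a thin spacetime tubular neighbourhood of $\mathrm{reg}\,\mathcal{M}$ via nearest-point projection, declare $\hat H \equiv +\infty$ on $\mathfrak{S}$, and set $\Omega_{(\alpha,\beta)} := \{\hat H > H_\mathrm{bdd}\}$, taking its connected components as the $\Omega_i$. Properties $(i)$ and $(ii)$ then hold by construction. The main technical obstacle is checking that this tubular construction genuinely satisfies $\mathcal{M} \cap \partial \Omega_i = \{|H| = H_\mathrm{bdd}\} \cap \bar S_i$, with no stray spacetime boundary points from other parts of the flow nor pathological accumulation at $\mathfrak{S}$. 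This is where the canonical neighbourhood theorem is indispensable: for $H_\mathrm{bdd}$ sufficiently large each $\Omega_i$ is structurally modelled on a pinching sphere or cylinder region, so $\{|H| = H_\mathrm{bdd}\}$ is a clean submanifold of the regular flow and thickens consistently into spacetime.
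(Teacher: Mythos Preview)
Your proposal is correct and follows essentially the same strategy as the paper: choose a closeness parameter small enough to force $\alpha$-noncollapsedness and $\beta$-uniform $2$-convexity via the canonical neighbourhood theorem, use compactness of $\mathfrak{S}$ to cover it by parabolic balls with this control, define $H_0$ as the supremum of $|H|$ on the complement, and then realise $\Omega_{(\alpha,\beta)}$ as a spacetime open set whose trace on $\mathcal{M}$ is the super-level set $\{|H|>H_\mathrm{bdd}\}\cup\mathfrak{S}$.

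The only noteworthy difference is in the final thickening step. You extend $|H|$ off the flow by nearest-point projection and take a super-level set of the extension; the paper instead observes that $\{|H|>H_\mathrm{bdd}\}\cup\mathfrak{S}$ is relatively open in $\mathrm{supp}(\mathcal{M})$ (the mean convex neighbourhood theorem being invoked to see that singular points are interior to this set), and then simply appeals to the definition of the subspace topology to produce an ambient open $U\subset\mathbb{R}^{n+1,1}$ with $U\cap\mathrm{supp}(\mathcal{M})=\{|H|>H_\mathrm{bdd}\}\cup\mathfrak{S}$. The paper's route sidesteps the issue you flag at the end---that the tubular radius degenerates as one approaches $\mathfrak{S}$ and one must check $\hat H$ extends continuously there---at the cost of being less explicit about what $\Omega_{(\alpha,\beta)}$ actually looks like. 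Your construction would work too, since $|H|\to\infty$ along any sequence of regular points accumulating in $\mathfrak{S}$ (again from the canonical neighbourhood theorem), which gives the needed continuity of $\hat H$ at singular points; but the subspace-topology argument is cleaner.
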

  \begin{proof}
  Fix $\Balpha$ satisfying the assumptions of Definition \ref{abnbhd}, and take $\varepsilon<(2\xi)^{-1}$. Additionally, we take $\varepsilon$ small enough that if a flow is $\varepsilon$-close an ancient, asymptotically cylindrical flow, then it is $\beta$-uniformly 2-convex.
  
  By the canonical neighbourhood theorem, Theorem \ref{canonical}, and the compactness of the singular set, there is an $r>0$ such that any regular point in the parabolic cylinder $P(Y,r)$, centred at $Y\in\mathfrak{S}$ is $\varepsilon$-close to one of the ancient, asymptotically cylindrical flows (at scale of the mean curvature).
  
  This radius can be taken such that at any interior regular point the flow is locally $\alpha$-noncollapsed.
  
  The union of the above cylinders, $\cup_{Y\in \mathfrak{S}} P(Y,r) $, defines a cover of the singular set. Observe, in each connected component the mean curvature has a single sign (a local orientation). Let $\{X_i\}_{i\in\mathbb{N}}$ be a sequence of regular points contained in a single connected component that accumulate in $\mathfrak{S}$. It is immediate from the canonical neighbourhood theorem that $H(X_i)\to \infty$.
  
  Hence, we can fix a $H_\mathrm{bdd}$ sufficiently large that $\Omega:=\{X\in\mathrm{reg}(\mathcal{M})\  \mathrm{s.t.} \ |H(X)|>H_\mathrm{bdd}\}\Subset \cup_{Y\in \mathfrak{S}} P(Y,r) $.

  Observe, $\mathrm{reg}(\mathcal{M})$ is relatively open in $\mathrm{supp}(\mathcal{M})$, so $\Omega$ is a relatively open set in $\mathrm{supp}(\mathcal{M})$. Moreover, the mean convex neighbourhood theorem shows that we can include singular points, provided they are spherical or neck-pinch singularities, i.e.~ $\Omega'=\{ X\in \mathrm{reg}(\mathcal{M}) \ | \ |H(X)|>H_{\mathrm{bdd}}\}\cup \mathfrak{S}$ is open in $\mathrm{supp}(\mathcal{M})$. The topology of $\mathrm{supp}(\mathcal{M})$ is inherited from the standard parabolic topology of space-time, $\mathbb{R}^{n+1,1}$. Thus, there is an open set $U$ in $\mathbb{R}^{n+1,1}$ such that $U\cap \mathrm{supp}(\mathcal{M})= \Omega'$. $\Omega_{(\alpha,\beta)}$ can be taken as any collection of such open sets in space-time. Hence, $\Omega_{(\alpha,\beta)}$ is an open space-time neighbourhood of the singular set. We can assume this neighbourhood has finitely many connected components since the singular set is compact.
  
  Finally, the $\beta$-uniform 2-convexity and $\alpha$-noncollapsedness for $X\in\mathcal{M}\cap\partial\Omega_i$ is immediate from the choice of $\varepsilon$ in the canonical neighbourhood theorem.
  \end{proof}

\begin{defn}[Neighbourhood of the boundary]\label{bddnbhd}
For a connected component $\Omega_i$, we define
\begin{align*}
    N_i = \bigcup_{X\in \partial \Omega_i} P(X,2 \xi H_\mathrm{bdd}^{-1}).
\end{align*}
Where $P(X,2 \xi H_\mathrm{bdd}^{-1})$ is the backwards parabolic cylinder centered at $X$. Observe, as specified in Definition \ref{abnbhd}, $\mathcal{M}\cap P(X,2 \xi H_\mathrm{bdd}^{-1})$ will be smooth and $\beta$-uniformly 2-convex.
\end{defn}

We now define a flow similar to the mean convex $(\alpha,\delta)$-flows of \cite{hksurg}. It is a unit-regular cyclic mod 2 Brakke flow with the replacement of (smooth) $\delta$-necks by caps.

\begin{defn}[$(\alpha,\delta)$-Brakke flow]\label{brokenflow}
Compare definition \ref{alphadelta}.

Let $M^{n}\subset \mathbb{R}^{n+1}$ be a compact, smoothly embedded hypersurface.  Let  $\mathcal{M}$ a unit-regular, cyclic (mod 2) Brakke flow emerging from $M$ that encounters only (multiplicity one) spherical and neck-pinch singularities.

An $(\alpha, \delta)$-Brakke flow is defined as the collection of unit-regular cyclic (mod 2) Brakke flows
\begin{align*}
    \{\mathcal{M}^i\}=\{\mu^i_t\}_{t\in [t_{i-1},t_i]}, (i=1,\ldots,k+1;\  0=t_0<\cdots<t_k<t_{k+1}=t_\mathrm{Ext}),
\end{align*}

with the following properties. We adopt the standard notation of `calligraphic' $\mathcal{M}$ to denote flows, and `roman' $M_t$ the $t$-time slice of $\mathcal{M}$. Superscripts will remain consistent between flows and timeslices in flows with surgery.
\begin{enumerate}[(i)]
    \item $\mathcal{M}^i$ is a smooth flow for $1\leq i\leq k$. That is, surgery is only performed if the flow is smooth.
    \item For each $i=1,\ldots k$, we identify in $M^i_{t_i}=:M^-_{t_i}$, the final time slice of the smooth mean curvature flow $\mathcal{M}^i$, a collection of disjoint strong $\delta$-necks contained in $\Omega_{(\alpha,\beta)}$. Each neck is replaced, provided the next point is satisfied, by pairs of standard caps as in Definition \ref{necktocap}, creating the possibly disconnected hypersurface $M^\#_{t_i}$.
    \item Necks at time $t_i\in\{t_1,\ldots,t_k\}$ contained in $\Omega_j$, a connected component of $\Omega_{(\alpha,\beta)}$, are only replaced by caps if the flow $\mathcal{M}^i$ can be written as a $\delta$-graph over $\mathcal{M}$ in the boundary neighbourhoods $N_j$ at time $t_i$. This is to ensure that the curvature estimate of \cite{hksurg} carries over to the surgery flow. See Remarks \ref{item3} and \ref{reasonforcontrol}. If this condition fails, we treat the last time surgeries were successfully performed as $t_k$ and we continue as in item (vi). Note, we allow the case where being a graph over the boundary at time $t_i$ is `vacuously true' i.e. $\mathcal{M}^i\cap\partial\Omega_j=\emptyset,\mathcal{M}^i\cap\Omega_j\neq\emptyset$. Indeed, if a component of the flow is contained entirely in $\Omega_{(\alpha,\beta)}$, then it satisfies the assumptions of $\alpha$ non-collapsedness and $\beta$-uniform 2-convexity by the maximum principle. 
    \item The initial timeslice of $\mathcal{M}^{i+1}$, $M^{i+1}_{t_i}:=M^+_{t_i}$ is obtained from the post-surgery hypersurface $M^{\#}_{t_i}$ by dropping some connected components contained in $\Omega_{(\alpha,\beta)}$.
    \item There exists $s_\#>0$ which depends only on the Brakke flow $\mathcal{M}$ such that all necks in item (i) have radius $s\in [\mu^{-1/2}s_\#,\mu^{1/2}s_\#]$\footnote{ $\mu \in[1,\infty)$ is a constant that quantifies the notation of surgeries at comparable scales. See \cite[Convention 1.2]{hksurg}}.
    \item We allow the flow $\mathcal{M}^{k+1}$ to develop as a unit-regular Brakke flow until its extinction at time $t_{k+1}=t_\mathrm{Ext}$. Specifically, we choose the integral, unit-regular, cyclic (mod 2) Brakke flow whose support is the outer flow from the initial condition of $\mathcal{M}^k$. See Hershkovits--White, \cite{hershwhite}, where such a flow is constructed.

\end{enumerate}
\end{defn}

\begin{remark}
 In item (i), we require that $M^i_{t_i}$ is a smooth hypersurface for neck replacement to occur. Thus, after neck replacement the flow can be continued as an integral, unit-regular, cyclic (mod 2) Brakke flow by elliptic regularisation. It should be possible to weaken this requirement to being an integral current, however, this is not needed for the purposes of the current work. The choice of outer flow is important later, for understanding barriers to flows with surgical modification.
\end{remark}

\begin{remark}\label{item3}
 Item (iii) requires the $(\alpha,\delta)$-Brakke flow can be written as a $\delta$-graph over $\mathcal{M}$ in $N_i$. By this we mean, the surgery flow is $\delta$-close to $\mathcal{M}$ in $C^{\lfloor\frac1{\delta} \rfloor}(N_i)$. Whilst imposing such a condition may seem unmotivated, it occurs naturally when considering sequences of smooth flows that converge to a smooth limit. We discuss how our flows with surgery converge in Section \ref{barriersandstability}. 
\end{remark}
\begin{remark}\label{reasonforcontrol}We use the $\delta$-graphical condition to ensure that along the boundary of $\Omega_{(\alpha,\beta)}$, the surgery flow is $\beta$-uniformly 2-convex and $\alpha$-noncollapsed, provided $\delta>0$ is taken sufficiently small. The size of the required $\delta$ will depend on $H_\mathrm{bdd}$ and, of course, our choice of $\alpha$ and $\beta$. We can then promote this to interior control by the maximum principle.
Demanding control in a neighbourhood of the boundary (as opposed to just on the boundary) addresses two problems. Firstly, we need to use a two point maximum principle to show interior $\alpha$-noncollapsedness as in\cite{andrews}. We discuss why this graphical condition in the boundary provides sufficient control of the geometry of the flows with surgery to apply a two point maximum principle in Remark \ref{andrewsdiscussion}. Secondly, by enforcing a boundary graphical condition in the definition of the $(\alpha,\delta)$-Brakke flows, we ensure the hypotheses of the Haslhofer--Kleiner curvature estimate are satisfied at all interior points, before the final time of surgery. This follows essentially from the triangle inequality and the maximum principle. For details, see Theorem \ref{bddctrl}.
\end{remark}
\begin{remark}
 It is important to stress that the uniform backward control of 2-convexity and noncollapsedness along the boundary is fundamental in being able to apply the curvature estimate for our choice of $\Lambda$. Note, this control is not needed if the mean curvature tends to infinity, only when one expects the curvature to remain bounded. For example, this argument is not needed when applying the curvature estimates in the Canonical Neighbourhood Theorem of Haslhofer--Kleiner, but is needed for showing surgery accumulates in the singular set.
\end{remark}
\begin{remark}
In the formalism of Haslhofer--Kleiner surgery, $\alpha$ and $\beta$ are controlled by the initial condition. In this flow, these parameters are controlled locally from the values on the boundary by the maximum principle. 
\end{remark}

We now define the weak surgical flows.  The key deviations are that (a) the flow can become singular, and (b) the requirement that surgery only takes place in a predetermined neighbourhood of the singular set of the flow $\mathcal{M}$. Whilst this initially may feel restrictive, it is entirely natural. See Section \ref{barriersandstability}.

\begin{defn}[Weak $(\Balpha, \delta, \mathbb{H})$-flow]\label{weakflow}
Let $M^{n}\subset \mathbb{R}^{n+1}$ be a compact, smoothly embedded hypersurface be a $\gamma$-controlled initial condition. Let $\mathcal{M}$ be a unit-regular, cyclic (mod 2) Brakke flow emerging from $M$ that encounters only (multiplicity one) spherical and neck-pinch singularities. For a fixed $\Balpha$ (as above), $\delta>0$ and surgery parameters $\mathbb{H}$ we define $\mathcal{M}_\mathbb{H}$ as the \textit{weak $(\Balpha, \delta, \mathbb{H})$-flow} or \textit{weak surgery flow derived from $\mathcal{M}$} as the $(\alpha, \delta)$-Brakke flow that satisfies the following conditions:
\begin{enumerate}[(i)]
    \item All surgeries take place inside the $(\alpha,\beta)$-neighbourhood of the singular set of $\mathcal{M}$, the region where the original flow is $\alpha$-noncollapsed and $\beta$-uniformly 2-convex.
    \item Surgeries and/or discarding takes place at times $t$ when $|\mathbf{H}|=H_\mathrm{trig}$ somewhere in $\Omega_{(\alpha,\beta)}$. Note, we actually allow $|\mathbf{H}|$ to exceed $H_\mathrm{trig}$ in the flow outside the region where we perform surgery.
    \item The collection of necks is minimal, and the necks are of curvature $|H_\mathrm{neck}|$. The necks separate the set $\{|\mathbf{H}|=H_\mathrm{trig}\}$ from $\{|\mathbf{H}|\leq H_\mathrm{th}\}$.
    \item The smooth hypersurface $M^+_t$ is obtained from $M^{-}_t$ by dropping some smooth components of mean curvature $|\mathbf{H}|>H_\mathrm{th}$ contained in $\Omega_{(\alpha,\beta)}$. In particular, for each pair of facing surgery caps, precisely one is discarded.
    \item If a strong $\delta$-neck is also a strong $\hat\delta$ neck for $\hat\delta<\delta$ then item (iv) of definition \ref{brokenflow} holds with $\hat\delta$ instead of $\delta$.
\end{enumerate}

\end{defn}

\begin{remark}
Item (v) is the stipulation that if a $\delta$-neck sits inside a stronger $\hat \delta$-neck, then the surgery is performed in a `better' way, that is closer to the ideal cylinder and cap. This is an essential component of self-improvement.
\end{remark}

\begin{remark}
We allow the flow to continue as a unit-regular Brakke flow if a (possibly non-generic) singularity forms after the last surgery. Note that we cannot be certain such a continuation is unique. We gain control of the singular behaviour via the barriers constructed in Section 4, in particular showing that any singularities will be spherical or neck-pinch singularities (and thus the continuation is well defined). In Section 5, we will show that giving control back to $H_\mathrm{trig}$ gives a smooth surgery in the same sense as \cite{hksurg}. 
\end{remark}

Consider the following examples of weak surgery flows.
\begin{example}
The shrinking sphere is a weak $(\Balpha,\delta,\mathbb{H})$-surgery flow for all values of $\mathbb{H}$, if one chooses not to drop components of high curvature.
\end{example}
\begin{example}
Fix $\mathbb{H}$. The shrinking sphere that vanishes once the mean curvature reaches $H_\mathrm{th}$ is a weak $(\Balpha,\delta,\mathbb{H})$-surgery flow.
\end{example}
\begin{example}
 Fix $\Balpha$ and $\delta>0$. Let $M$ be an $\Balpha$-controlled initial condition. Then, there is a $\mathbb{H}$ given by \cite{hksurg} such that the $(\Balpha, \delta,\mathbb{H})$ mean curvature flow with surgery of \cite{hksurg} exists. It is a weak $(\Balpha,\delta,\mathbb{H})$-surgery flow.
\end{example}
\section{Barriers and Stability}\label{barriersandstability}

We now develop the tools for controlling the weak surgery flows. In the first half of this section, we show that the unit-regular Brakke flow from hypersurfaces equidistant to the initial hypersurface act as barriers to our weak surgery flows, provided the surgery scale is large enough. The existence of these barriers requires the recent technical result of \cite{ccms20}, concerning the connectedness of the singular set for flows with singular set of small Hausdorff dimension. Indeed, such a result is critical as one needs a way to show higher multiplicities cannot develop.
We then tackle the problem of stability of the surgery flows. The parabolic nature of mean curvature flow means that changing the flow in one location can affect other regions at infinite speed. Whilst this problem cannot be completely avoided, showing the surgery parameters can be chosen such that surgeries change the flow in a manner that is `stable’ with respect to the unmodified flow is sufficient. Recalling the definition of the $(\alpha,\beta)$-neighbourhood, one can see that if we can show suitable control in $N_i$, a neighbourhood of the boundary of a connected component of the $(\alpha,\beta)$-neighbourhood, then in the interior our flow with surgery will locally look like a $(\Balpha,\delta,\mathbb{H})$-flow of Haslhofer—Kleiner. In Section 5, this is precisely how we will show that their theory can be applied directly to deduce existence of a smooth flow with surgery. Said boundary control is achieved by a local convergence result. In showing this, we additionally prove the stronger result that the weak flows with surgery converge to the unmodified flow as Brakke flows away from the singular set.

 For the following, we will suppose that $M^n\subset\mathbb{R}^{n+1}$ is a closed, smoothly embedded hypersurface and that there is a unit-regular, cyclic (mod 2) Brakke flow $\mathcal{M}$ emerging from $M$ that encounters only multiplicity one spherical and neck-pinch singularities. A priori, such a flow is not unique, however, combining recent results we get the following uniqueness result.

\begin{theorem}\label{bigunique}
 Let $M^n\subset\mathbb{R}^{n+1}$ be a closed, smoothly embedded hypersurface. If there is a unit-regular cyclic (mod 2) Brakke flow $\mathcal{M}$ emerging from $M$ that encounters only multiplicity one spherical and neck-pinch singularities, then the level set flow does not fatten. In particular, $\mathcal{M}$ is unique.
\end{theorem}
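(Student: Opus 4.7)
The plan is to exhibit $\mathrm{supp}\,\mathcal{M}$ as the full level set flow from $M$, from which non-fattening is immediate; uniqueness of the unit-regular, cyclic mod-2 Brakke flow then follows from standard arguments (non-fattening implies that the unique support of any unit-regular cyclic flow from $M$ is pinned down, and cyclicity plus unit-regularity pin down the multiplicity).

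The first step is to verify the hypothesis of Theorem \ref{smallsing}, namely that $\mathcal{H}^n_P(\mathrm{supp}(\mathcal{M})\setminus\widehat{\mathrm{reg}}\,\mathcal{M})=0$. Spherical singularities are isolated in spacetime, so contribute nothing to $\mathcal{H}^n_P$. For neck-pinch singularities the tangent flow is unique by \cite{CM15}, and the Colding--Minicozzi stratification \cite{CM16} (see Remark \ref{singularstructure}) shows that the neck-pinch singular set is contained in a countable union of $C^1$ space-time curves. A direct covering by parabolic balls of radius $r$ shows any such curve carries zero $n$-dimensional parabolic Hausdorff measure whenever $n\geq 2$. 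Theorem \ref{smallsing} then applies and yields that $\mathrm{reg}\,\mathcal{M}$ is connected.

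Next I would establish local agreement between $\mathrm{supp}\,\mathcal{M}$ and the level set flow. At regular points $\mathcal{M}$ is a classical smooth flow, so it locally coincides with the level set flow by avoidance. Near each spherical or neck-pinch singular point the mean convex neighbourhood theorems \cite{chh18, chhw19} provide a parabolic neighbourhood on which $\mathrm{supp}\,\mathcal{M}$ is one-sided: it separates a local inner region from a local outer region, both of which are honest weak set flows. The Hershkovits--White criterion \cite{hershwhite} applied to these one-sided regions yields non-fattening of the level set flow in a spacetime neighbourhood of each singular point.

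The final step is to promote these local agreements into a single global identification of $\mathrm{supp}\,\mathcal{M}$ with the level set flow from $M$. This is where the connectedness of $\mathrm{reg}\,\mathcal{M}$ from Theorem \ref{smallsing} is indispensable: it lets us choose compatible local orientations along the regular part, so that the local inner and outer regions furnished by the mean convex neighbourhood theorem glue along $\mathrm{reg}\,\mathcal{M}$ into globally defined, spacetime-connected inner and outer regions whose common boundary is $\mathrm{supp}\,\mathcal{M}$. These then function as global one-sided barriers to any weak set flow from $M$, forcing the level set flow to coincide with $\mathrm{supp}\,\mathcal{M}$ and to not fatten; uniqueness of $\mathcal{M}$ follows. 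The main obstacle is precisely this gluing: without connectedness one could not rule out distinct, incompatible choices of ``interior'' on different components of $\mathrm{reg}\,\mathcal{M}$, and it is here that the application of \cite{ccms20} is essential.
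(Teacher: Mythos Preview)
Your approach differs substantially from the paper's. Both begin by invoking stratification and Theorem \ref{smallsing} to establish that $\mathrm{reg}\,\mathcal{M}$ is connected, but then diverge.

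The paper argues by comparison with the outer flow. Hershkovits--White \cite{hershwhite} furnish a unit-regular Brakke flow $\mathcal{N}$ supported on the outer flow of $M$; one shows $\mathcal{M}=\mathcal{N}$ by an iteration: the two agree up to the first singular time by smooth uniqueness, at which point $\mathcal{N}$ inherits a spherical or neck-pinch singularity and so cannot fatten there, and connectedness of the regular set (applied to both $\mathcal{M}$ and $\mathcal{N}$) then forces unit density, hence agreement as Brakke flows, past the singularity. Once $\mathcal{M}=\mathcal{N}$, the outer flow itself has only spherical and neck-pinch singularities, and non-fattening follows from \cite[Theorem 1.19]{chhw19}. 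Connectedness of the regular set is thus used to pin down \emph{multiplicity} in the comparison, not to glue orientations.

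Your route --- gluing the local mean-convex neighbourhoods along $\mathrm{reg}\,\mathcal{M}$ into global inner and outer barriers --- has a real gap. The assertion that the glued regions ``function as global one-sided barriers to any weak set flow from $M$'' is not justified: being a weak set flow is a global avoidance condition, and it does not follow from local checks plus orientation-compatibility along the regular part. More seriously, your invocation of the Hershkovits--White criterion for non-fattening near singularities of $\mathcal{M}$ is circular as written: their criterion concerns singularities of the \emph{level set flow} (equivalently, the outer flow), and a priori the level set flow could develop singularities not visible in $\mathcal{M}$. The paper's comparison with $\mathcal{N}$ is precisely what breaks this circularity --- by first proving $\mathcal{M}=\mathcal{N}$ one transfers the singularity hypothesis to the outer flow, and only then is non-fattening deduced.
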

\begin{proof}
Recall that the support of $\mathcal{M}$ defines a weak set flow, and thus is contained in the level set flow of $M$.
Let $\mathcal{N}$ be the unit-regular Brakke flow whose support is the outer flow $\{M_t\}$. The existence of such a flow is proven in \cite{hershwhite}. The uniqueness of smooth mean curvature flow implies that $\mathcal{M}$ and $\mathcal{N}$ agree up to the first singular time. Thus, their supports agree at the first singular time. Since $\mathcal{M}$ has only spherical and neck-pinch singularities, the flow $\mathcal{N}$ cannot fatten at the first singular time, $t_0$, \cite{hershwhite}. Moreover, stratification, \cite{bw97}, yields that the singular set of $\mathcal{M}$ has parabolic Hausdorff dimension at most one. Hence, by Theorem \ref{smallsing}, (\cite[Theorem F.4]{ccms20}), the regular sets of $\mathcal{M}$ and $\mathcal{N}$ are connected, and thus we have unit density at smooth points. Thus, the flows agree as Brakke flows up to the first singular time. This argument can be iterated since the flow is compact. i.e.~ For the two flows to differ, the outer flow must encounter a non-spherical or non-neck-pinch singularity, which cannot happen as the flows agree back in time. Thus, $\mathcal{M}=\mathcal{N}$. In particular, the outer flow has only spherical and neck-pinch singularities and hence does not fatten, \cite[Theorem 1.19]{chhw19}. 

Since the support of any Brakke flow defines a weak set flow, the non-fattening and connectedness of the regular set show that $\mathcal{M}$ is the unique unit-regular flow.

\end{proof}

Thus, it is sufficient to suppose $\mathcal{M}$ has only spherical and neck-pinch singularities.
 
We also pick a $\varepsilon_0= \varepsilon_0(M)>0$ sufficiently small, such that for $-\varepsilon_0\leq \varepsilon\leq \varepsilon_0$ the hypersurfaces $M_\varepsilon = \{\text{dist}(\cdot, M) = \varepsilon\}$, where $\text{dist}(\cdot, M)$ is the signed distance function to $M$, are smooth.

\begin{lemma}\label{convergence}
Let $\varepsilon<\varepsilon_0$, and let $\mathcal{M}_{\pm \varepsilon}$ be unit-regular cyclic (mod 2) Brakke flows emerging from the hypersurfaces $M_{\pm\varepsilon}$. Then, \begin{align*} \lim_{\varepsilon\to 0} \mathcal{M}_{\pm\varepsilon} = \mathcal{M} \end{align*}
as Brakke flows.
\end{lemma}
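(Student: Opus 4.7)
The plan is to combine Brakke compactness with the uniqueness theorem just established and the connectedness statement of Theorem \ref{smallsing}. Given any sequence $\varepsilon_j\to 0$, Ilmanen's compactness theorem produces a subsequential limit $\mathcal{M}_\infty$ of $\mathcal{M}_{\pm \varepsilon_j}$, which is an integral Brakke flow; unit-regularity passes to such limits by work of White, and the cyclic mod-$2$ property is preserved under flat-chain compactness. Since $M_{\pm\varepsilon_j}\to M$ smoothly, the time-zero slice of $\mathcal{M}_\infty$ is $\mathcal{H}^n\lfloor M$, and short-time uniqueness of smooth MCF together with Brakke's regularity theorem forces $\mathcal{M}_\infty=\mathcal{M}$ on $[0,\tau_1)$, where $\tau_1$ is the first singular time of $\mathcal{M}$.

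Next I would identify the supports in spacetime. A Hausdorff limit of weak set flows is a weak set flow starting from $M$, so $\mathrm{supp}(\mathcal{M}_\infty)$ lies inside the level set flow from $M$; by the uniqueness theorem proved just above, this level set flow does not fatten and therefore coincides with $\mathrm{supp}(\mathcal{M})$. For the reverse inclusion I would use the sandwiching provided by the equidistant hypersurfaces: avoidance for level set flows keeps $\mathrm{supp}(\mathcal{M}_{+\varepsilon})$ strictly exterior and $\mathrm{supp}(\mathcal{M}_{-\varepsilon})$ strictly interior to $\mathrm{supp}(\mathcal{M})$, and non-fattening upgrades this to two-sided Hausdorff convergence of the supports. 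Hence $\mathrm{supp}(\mathcal{M}_\infty)=\mathrm{supp}(\mathcal{M})$.

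With equal supports the argument closes via Theorem \ref{smallsing}. Since $\mathcal{M}$ has only spherical and neck-pinch singularities, its singular set has parabolic Hausdorff dimension at most one by \cite{CM15}; the same therefore holds for $\mathcal{M}_\infty$. Unit-regularity then forces $\widehat{\mathrm{reg}}\,\mathcal{M}_\infty=\mathrm{reg}\,\mathcal{M}_\infty$ to be connected, and since $\mathcal{M}_\infty=\mathcal{M}$ with multiplicity one on $[0,\tau_1)$, connectedness propagates multiplicity one through the entire regular set. Thus $\mathcal{M}_\infty=\mathcal{M}$ as Brakke flows. Since every subsequential limit equals $\mathcal{M}$, the full families $\mathcal{M}_{\pm\varepsilon}$ converge to $\mathcal{M}$.

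The main obstacle I anticipate is upgrading the easy inclusion $\mathrm{supp}(\mathcal{M}_\infty)\subseteq\mathrm{supp}(\mathcal{M})$ to equality. Non-fattening alone only identifies the two canonical set-theoretic evolutions; one must also rule out that $\mathcal{M}_\infty$ has locally vanished on an open region of $\mathrm{supp}(\mathcal{M})$, and this depends crucially on both the two-sided sandwiching from the $\pm\varepsilon$ perturbations and the unit-regularity of the limit.
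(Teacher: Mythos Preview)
Your argument is correct and follows the same skeleton as the paper: Ilmanen compactness, support control via non-fattening of the level set flow of $M$, and then Theorem~\ref{smallsing} to force unit multiplicity everywhere on the regular part. The paper differs in one streamlining step that dissolves exactly the obstacle you flag at the end. Rather than first proving $\mathrm{supp}(\mathcal{M}_\infty)=\mathrm{supp}(\mathcal{M})$ and then applying Theorem~\ref{smallsing} to $\mathcal{M}_\infty$, the paper applies Theorem~\ref{smallsing} to $\mathcal{M}$ itself: since $\mathrm{sing}(\mathcal{M})$ has parabolic Hausdorff dimension at most one, $\mathrm{reg}(\mathcal{M})$ is connected, and one then propagates unit density of the limit along paths in $\mathrm{reg}(\mathcal{M})$ from the initial slice, using only the easy inclusion $\mathrm{supp}(\mathcal{M}_\infty)\subseteq\mathrm{supp}(\mathcal{M})$ together with unit-regularity of $\mathcal{M}_\infty$. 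This sidesteps the need to rule out local vanishing via a separate sandwiching argument. A small imprecision in your write-up: when proving convergence of $\mathcal{M}_{+\varepsilon_j}$ you invoke two-sided sandwiching by both $\pm\varepsilon$ families, but a priori you only have the $+\varepsilon$ side available; this is easily repaired (treat both signs simultaneously, or simply adopt the paper's one-inclusion route).
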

\begin{proof}
 We prove the statement for the $+\varepsilon$ flows, as the proof for the $-\varepsilon$ flows will be identical.
 
Smooth convergence of $ \mathcal{M}_{\varepsilon} \to\mathcal{M}$ holds up to the first singular time of $\mathcal{M}$. For later times we consider the following.

Let $\{\varepsilon_i\}_{i\in\mathbb{N}}$ be a positive null sequence, and consider the flows $\mathcal{M}_{\varepsilon_i}$. By the convergence result of Ilmanen \cite{ilmanen}, there is a unit-regular flow $\tilde{\mathcal{M}}$ such that $\mathcal{M}_{\varepsilon_i} \rightharpoonup \tilde{\mathcal{M}}$. In particular, since the level set flow from $M$ does not fatten, we have $\text{supp}(\tilde{\mathcal{M}}) \subseteq \text{supp}(\mathcal{M})$.

We now proceed via the logic of Theorem \ref{smallsing} \cite[Appendix F]{ccms20}.

Since $\mathcal{M}$ has only spherical and neck-pinch singularities, stratification, \cite{bw97}, yields that the singular set has parabolic Hausdorff dimension at most one, so by Theorem \ref{smallsing} $\mathcal{M}$ has connected regular set. Indeed, by considering paths that connect to the initial time avoiding the singular set and noting that $\tilde{\mathcal{M}}$ is unit regular, we see that the density of $\tilde{\mathcal{M}}$ is equal to that of $\mathcal{M}$ at all regular points. Since the singular set of $\mathcal{M}$ has small measure, we have $\tilde{\mathcal{M}}= \mathcal{M}$.

This is true for all null sequences $\{\varepsilon_i\}$, hence the above argument shows $\mathcal{M}_{+\varepsilon}$ converges to $\mathcal{M}$.
\end{proof}
\begin{remark}
Note, for small $\varepsilon>0$ the barrier flows have only spherical and neck--pinch singularities.  This follows from the resolution of the mean convex neighbourhood conjecture, \cite{chh18, chhw19} and the extension to near-by flows by Schulze--Sesum \cite{schulzesesum}.
\end{remark}

\begin{lemma}\label{distances}Let $M,M_{\pm\varepsilon}$ be as above. Then, for every $t$ where both flows are defined,
$|d(M_t,{M}_{{\pm\varepsilon},t})|\geq \varepsilon$.
\end{lemma}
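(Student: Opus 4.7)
The plan is to reduce the statement to the avoidance principle for weak set flows (equivalently, for level set flows). At $t=0$, the hypersurface $M_{\pm\varepsilon} = \{\mathrm{dist}(\cdot,M) = \pm\varepsilon\}$ lies at signed distance exactly $\varepsilon$ from $M$; since $|\varepsilon| < \varepsilon_0$, the signed distance function is smooth in this range, so the hypersurfaces are disjoint with $d(M, M_{\pm\varepsilon}) = \varepsilon$. We want to propagate this separation to all later times during which both Brakke flows are defined.

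The first step is to observe that the support of any integral Brakke flow is a weak set flow in the sense of Ilmanen, recalled in Section \ref{prelim}. Hence $K_t := \mathrm{supp}(\mathcal{M}_t)$ and $L_t := \mathrm{supp}(\mathcal{M}_{\pm\varepsilon,t})$ are weak set flows emerging from $M$ and $M_{\pm\varepsilon}$ respectively. They are each contained in the level set flow emerging from the same initial data, which is the maximal weak set flow.

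The second step is to invoke the classical avoidance principle for the level set flow (Evans--Spruck, Chen--Giga--Goto): if $F_t$ and $G_t$ are the level set flows from disjoint closed sets $F_0, G_0 \subset \mathbb{R}^{n+1}$, then $t \mapsto d(F_t, G_t)$ is non-decreasing. Applied to $F_0 = M$ and $G_0 = M_{\pm\varepsilon}$, this gives $d(F_t, G_t) \geq \varepsilon$ for all $t$. Since $K_t \subseteq F_t$ and $L_t \subseteq G_t$, and distance between sets is reversed by inclusion, we conclude
\begin{equation*}
d(M_t, M_{\pm\varepsilon,t}) = d(K_t, L_t) \geq d(F_t, G_t) \geq \varepsilon.
\end{equation*}

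The main obstacle is purely formal: in the smooth setting, non-decrease of distance follows immediately from applying the parabolic maximum principle to the extrinsic distance between the two smooth flows, but here both $\mathcal{M}$ and $\mathcal{M}_{\pm\varepsilon}$ may become singular, so we must route the argument through weak set flows and the level set flow comparison principle rather than through a direct PDE argument. Alternatively, one can equivalently argue directly for weak set flows by approximating one of them from the exterior by a smooth, strictly shrinking mean curvature flow starting from $\{\mathrm{dist}(\cdot,M) < \varepsilon/2\}$-type hypersurfaces and using the definition of weak set flow against smooth barriers.
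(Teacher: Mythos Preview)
Your argument is correct and is essentially the same as the paper's, which simply invokes the standard avoidance principle for Brakke flows from Ilmanen. You route the argument through weak set flows and the level set flow comparison principle, which is one standard way to justify that avoidance principle; the paper just cites it directly.
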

\begin{proof}
 Follows from the standard avoidance principle for Brakke flows, see \cite{ilmanen}.
\end{proof}

\begin{defn}
 We will call the unit-regular Brakke flows $\mathcal{M}_{\pm\varepsilon}$ the \textit{$\varepsilon$-barriers}.
 
 We take the convention that $M_{+\varepsilon}$ is the hypersurface in the interior of $M$. $M_{-\varepsilon}$ is thus in the exterior.
 \end{defn}

 \begin{lemma}\label{barriers}($\mathcal{M}_{\pm\varepsilon}$ as Surgical Barriers) Let $M$ be as above.
 Fix $\varepsilon$, with $ 0<\varepsilon<\mu(M)$. Then, there exists a $H(\varepsilon)<\infty$ such that any weak $(\Balpha, \delta,\mathbb{H})$ surgical flow with $H_\mathrm{th}>H(\varepsilon)$ avoids $\mathcal{M}_{\pm\varepsilon}$. In particular, the distance between the barriers and surgery flow is non-decreasing.
 \end{lemma}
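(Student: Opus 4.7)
The strategy adapts Lauer's barrier argument: rather than comparing to a time-shifted copy of the weak flow, one compares to the pair of unit-regular Brakke flows $\mathcal{M}_{\pm\varepsilon}$ that sandwich $M$. The point is that, for sufficiently large $H_\mathrm{th}$, the neck radius $s\sim 1/H_\mathrm{neck}$ at which surgery acts is so small that the surgery balls $B(p_j,5\Gamma_0 s_j)$ fit well inside the open tube of width $\varepsilon$ around $M_t$ that separates the pre-surgery hypersurface from the barriers. Upgrading this local non-intersection to monotone non-decreasing distance will come from verifying that $\mathcal{M}_\mathbb{H}$ itself defines a weak set flow and invoking Ilmanen's avoidance principle.

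I would fix the cap-separation parameter $\Gamma_0$ from Proposition \ref{separation} and choose $H(\varepsilon)$ large enough that, for every $H_\mathrm{th}>H(\varepsilon)$, the bound $10\Gamma_0 s<\varepsilon$ holds for every neck scale $s$ encountered (this uses $s\leq C/H_\mathrm{neck}\leq C/(\Theta H_\mathrm{th})$). Then I would induct on the ordered surgery times $\tau_1<\tau_2<\cdots$. Between surgeries $\mathcal{M}_\mathbb{H}$ is a unit-regular Brakke flow, so the classical Brakke-flow avoidance principle (cf.~Lemma \ref{distances}) forces the distance to each of $\mathcal{M}_{\pm\varepsilon}$ to be non-decreasing on each open interval. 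At a surgery time $\tau_i$, the inductive hypothesis gives $d(M^-_{\tau_i},M_{\pm\varepsilon,\tau_i})\geq\varepsilon$, and since each neck centre $p_j$ lies on $M^-_{\tau_i}$, both barriers lie entirely outside every surgery ball $B(p_j,5\Gamma_0 s_j)$. Outside these balls $M^+_{\tau_i}=M^-_{\tau_i}$, while inside them the chain $K^+\subseteq K^\#\subseteq K^-$ from the remark following Definition \ref{genericnecktocap} implies that the new pieces of $M^+$ lie in the local interior of $M^-$ and remain at Hausdorff distance at most $5\Gamma_0 s_j<\varepsilon/2$ from $p_j$; none of them can reach a barrier.

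The main obstacle is upgrading avoidance to the assertion that the distance is non-decreasing \emph{across} surgery times. For this I would verify that $\mathcal{M}_\mathbb{H}$ is a weak set flow in the sense of Ilmanen, i.e.~any compact smooth mean curvature flow initially disjoint from $\mathcal{M}_\mathbb{H}$ stays disjoint. Between surgeries this is the Brakke-flow avoidance principle; at a surgery time, the compact support of the modification (inside balls of radius $5\Gamma_0 s_j<\varepsilon$) together with $K^+\subseteq K^-$ ensures that a test flow sitting in the local exterior of $M^-$ automatically sits in the local exterior of $M^+$, so by continuity it cannot have crossed $M^+$ without first crossing $M^-$. Applying this weak-set-flow property to the supports of $\mathcal{M}_{\pm\varepsilon}$ (which are themselves weak set flows) then yields non-decreasing distance, and in particular $\mathcal{M}_\mathbb{H}$ avoids $\mathcal{M}_{\pm\varepsilon}$ for all time. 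The delicate point is verifying the local-exterior assignment at outward necks, where the local and global orientations disagree; this is precisely why one needs the local-interior framework set up immediately before Definition \ref{genericnecktocap}.
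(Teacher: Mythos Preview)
Your overall architecture matches the paper's: induct on surgery times, use the Brakke-flow avoidance principle on each smooth interval, and check that the surgery step does not destroy the $\varepsilon$-separation. The paper phrases the surgery check slightly differently---it shows that an $\varepsilon$-ball centred at any barrier point and lying in the (local) interior/exterior of $M^-_{t}$ remains in the (local) interior/exterior of $M^+_{t}$---but your formulation via the surgery balls $B(p_j,5\Gamma_0 s_j)$ is equivalent in spirit for the neck-replacement step.

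There is, however, a genuine gap. Your claim that ``outside these balls $M^+_{\tau_i}=M^-_{\tau_i}$'' is false: the passage $K^\#\to K^+$ discards entire connected components, and these are \emph{not} confined to the balls $B(p_j,5\Gamma_0 s_j)$. A discarded component may have diameter far larger than $5\Gamma_0 s_j$ (think of a long capped-off tube). Consequently your argument does not exclude the possibility that a barrier point sits inside the (local) interior of a discarded component, at distance~$\geq\varepsilon$ from $\partial K^-$, and is then left stranded on the wrong side once that component is removed. The weak-set-flow verification you sketch has the same hole: you check that a test flow in the local exterior of $M^-$ stays in the local exterior of $M^+$, but at an inward neck the interior barrier $\mathcal{M}_{+\varepsilon}$ lives in the local \emph{interior}, and discarding a component shrinks the local interior in a way your ball-radius condition $10\Gamma_0 s<\varepsilon$ does not control.

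The missing ingredient is precisely the paper's second observation: discarded components satisfy $|H|>H_\mathrm{th}$ everywhere, so if an $\varepsilon$-ball were contained in the (local) interior of such a component, its boundary sphere would force a point with $|H|\leq n/\varepsilon$, a contradiction once $H_\mathrm{th}>n/\varepsilon$. Adding this case (and taking $H(\varepsilon)\geq n/\varepsilon$ in addition to your neck-scale bound) closes the gap; without it the induction breaks the first time a component is discarded.
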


 \begin{proof}

 It is well known that the distance between two non-intersecting Brakke flows is non-decreasing, (avoidance principle \cite{ilmanen}). Provided the distance is not decreased across surgery, the claim follows.
 
 We hence check the behaviour at time of surgery. Without loss of generality, we consider only one of the barriers at inward and outward necks. The proof for the other barrier will follow identically.
 
 Let $\mathcal{M}_{+\varepsilon}$ be the evolution of the hypersurface in the interior of $M$. We follow the argument as outlined in \cite{hksurg}.
 \begin{claim} Let $t$ be a surgery time at an inward neck for the surgical flow $\mathcal{M}_\mathbb{H}$.
 For every $r>0$, there is a $H_\mathrm{min}(r)<\infty$ such that if $H_\mathrm{th}>H_\mathrm{min}$ and $B(\textbf{p},r)\subset \textrm{int}(M_{\mathbb{H},t^-})$, then $B(\textbf{p},r)\subset \textrm{int}(M_{\mathbb{H},t^+})$. 
 
 \end{claim}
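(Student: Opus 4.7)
The plan is to exploit the scale separation $s = (n-1)/H_\mathrm{neck} \leq (n-1)/(\Theta H_\mathrm{th})$ between the surgical modification, which is confined to $B(p_\mathrm{neck}, 5\Gamma s)$, and the ball $B(\mathbf{p}, r)$ of fixed radius, together with the fact that on the parabolic ball $P(p_\mathrm{neck}, s/\delta)$ the presurgery hypersurface is a $C^{\lfloor 1/\delta\rfloor}$-graph over the round cylinder of radius $s$. At an inward neck one has the chain of local-interior inclusions $K^+_t \subseteq K^\#_t \subseteq K^-_t$, and for inward necks the local interior coincides with $\mathrm{int}(M_{\mathbb{H},t^\pm})$, so it suffices to show that $B(\mathbf{p},r)$ avoids $B(p_\mathrm{neck}, 5\Gamma s)$, giving $B(\mathbf{p},r)\subseteq K^\#_t$, and is not absorbed into any discarded component, giving $B(\mathbf{p},r) \subseteq K^+_t$.

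The principal step is a direct geometric calculation. Choose cylindrical coordinates so the neck axis is the $z$-axis, with $p_\mathrm{neck} = (s,0,0)$ and $\mathbf{p} = (x_\mathbf{p},y_\mathbf{p},z_\mathbf{p})$ at radial distance $\rho_\mathbf{p}$ from the axis. Throughout the slab $|z|\leq s/\delta$ the local interior $K^-_t$ is, up to a $\delta$-perturbation, the solid cylinder $\{\rho<s\}$, so any axial cross-section of $B(\mathbf{p},r)\subset K^-_t$ must satisfy $\rho_\mathbf{p}+\sqrt{r^2-(z-z_\mathbf{p})^2}\leq s$. Setting $a = s-\rho_\mathbf{p}$ and $b=\sqrt{r^2-a^2}$ and evaluating at $z=\pm s/\delta$ yields $|z_\mathbf{p}|\geq s/\delta + b$ whenever $r>a$. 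Combined with the hypothesis $B(\mathbf{p},r)\cap B(p_\mathrm{neck},5\Gamma s)\neq\emptyset$, which after minimising over the azimuth gives $a^2+z_\mathbf{p}^2<(r+5\Gamma s)^2$, a short substitution reduces matters to
\[
    \tfrac{s}{\delta^{2}}+\tfrac{2b}{\delta}\;<\;10\Gamma r+25\Gamma^{2}s.
\]
Since $b\geq\sqrt{r^2-s^2}$, once $\delta$ is fixed below $1/(5\Gamma_0)$ and $s<r\sqrt{1-25\Gamma^2\delta^2}$, this inequality fails, producing a contradiction. The residual configuration, in which $B(\mathbf{p},r)$ touches the surgery ball without ever entering the cylindrical slab, is handled from $|\mathbf{p}-p_\mathrm{neck}|\geq r$ and $|\mathbf{p}-p_\mathrm{neck}|<r+5\Gamma s$, which force $\mathbf{p}$ at axial distance $>s/\delta$ but within $5\Gamma s$ of $p_\mathrm{neck}$; this is again impossible for $\delta<1/(5\Gamma_0)$ and small $s$.

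To complete the proof, any discarded component of $K^\#_t$ has $|H|>H_\mathrm{th}$ everywhere and sits in $\Omega_{(\alpha,\beta)}$, so is $\alpha$-noncollapsed and $\beta$-uniformly $2$-convex; the standard classification argument used to prove Corollary 1.25 of \cite{hksurg} then bounds the diameter of such a component by $C(\alpha,\beta)/H_\mathrm{th}$, which is less than $2r$ as soon as $H_\mathrm{th}>C(\alpha,\beta)/r$. Setting $H_\mathrm{min}(r)$ as the maximum of the thresholds produced in the two steps finishes the proof. The main obstacle I anticipate is making the $\delta$-perturbation of the cylinder model rigorous, so that the cross-section inequality propagates with controlled error, and handling the off-axis case $\rho_\mathbf{p}>0$ uniformly; once these are in hand, the argument reduces to the arithmetic above involving $s$, $\delta$, and $\Gamma$.
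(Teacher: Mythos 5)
Your proof takes essentially the same route as the paper: show that (a) $B(\mathbf{p},r)$ cannot reach the region modified by the neck replacement, because there the presurgery domain is a tube of radius $\sim s\ll r$, and (b) the ball cannot sit inside a discarded component. For (a) the paper simply cites the analogous lemma in Haslhofer--Kleiner's proof that surgery flows converge to the level set flow, whereas you carry out the cylindrical-coordinate computation explicitly; this is fine and arguably more self-contained. For (b) the paper uses a shorter argument than yours: sliding a ball of radius $r$ inside the component until it touches $\partial K^{\#}$ produces a boundary point with $|H|\leq n/r$, contradicting $|H|>H_\mathrm{th}$ on discarded components once $H_\mathrm{th}>n/r$ --- there is no need to invoke the diameter bound from the classification of $\alpha$-noncollapsed, $\beta$-$2$-convex domains, though that also works. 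One caution on your part (a): the quantity $b=\sqrt{r^2-a^2}\geq\sqrt{r^2-s^2}$ only makes sense when $0\leq\rho_{\mathbf p}\leq s$ (so $0\leq a\leq s$); the regime $\rho_{\mathbf p}>s$ must be disposed of separately by noting that then the ball cannot meet the tube at all within the $\delta$-neck region (forcing $|z_{\mathbf p}|>s/\delta+r$, which contradicts the nonempty-intersection hypothesis for $\delta<1/(5\Gamma)$). Your "residual configuration" gestures at this but does not cleanly match the case split $\rho_{\mathbf p}\leq s$ versus $\rho_{\mathbf p}>s$. Also, strictly speaking what is needed is only that $B(\mathbf{p},r)$ avoid $K^-\setminus K^{\#}$ (which lies inside the tube), not that it avoid all of $B(p_\mathrm{neck},5\Gamma s)$; aiming for the latter is harmless but slightly overshoots.
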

\begin{proof}

Fix $r>0$.
 There are two regions one needs to check:
\begin{enumerate}
    \item The collection of necks. For each neck we consider its interior $K$ (See Definition \ref{genericnecktocap}). Following the argument of \cite[Theorem 1.25]{hksurg}, for sufficiently large $H_\mathrm{th}$, a ball of radius $r$ cannot be contained in $K$, as it will be a long and thin neck.

 \item The dropped components. If the ball were contained in the interior of a discarded component, then the discarded component would have a point with $|H|\leq nr^{-1}$. Discarded components have $|H|\geq H_\mathrm{th}$, thus picking $H_\mathrm{th}>nr^{-1}$ is sufficient to prove the claim.
 \end{enumerate}
 \end{proof}

  \begin{claim} Let $t$ be a surgery time at an outward neck for the surgical flow $\mathcal{M}_\mathbb{H}$.
 For every $r>0$, there is a $H_\mathrm{min}(r)<\infty$ such that if $H_\mathrm{th}>H_\mathrm{min}$ and $B(\textbf{p},r)\subset \textrm{int}(M_{\mathbb{H},t^-})$, then $B(\textbf{p},r)\subset \textrm{int}(M_{\mathbb{H},t^+})$. 
 \end{claim}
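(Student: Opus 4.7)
The plan is to follow the two-part layout of Claim 1, treating the neck replacement and the discarding of components separately. The key difference from Claim 1 is that for an outward neck the neck-replacement step becomes automatic, so only the discarded-components step puts a constraint on $H_\mathrm{min}$.

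For the neck replacement, I would invoke the observation highlighted in the remark following Definition \ref{genericnecktocap}: for an outward neck the local interior $K^-$ sits on the exterior side of $M_{\mathbb{H}, t^-}$. Consequently the pair of standard caps replacing the neck is installed inside $K^-$, that is, outside the presurgery bounded domain. The gap between the two dome apexes, previously in $K^-$ and hence outside $\mathrm{int}(M_{\mathbb{H}, t^-})$, opens up and merges with $\mathrm{int}(M_{\mathbb{H}, t^-})$ through the now-absent cylinder wall, yielding $\mathrm{int}(M_{\mathbb{H}, t^-}) \subseteq \mathrm{int}(M_{\mathbb{H}, t^{\#}})$. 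In particular $B(\mathbf{p}, r)$ survives cap replacement without any hypothesis on $H_\mathrm{th}$.

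For the discarded components step, the argument is verbatim the one used in Claim 1, part (2). If $B(\mathbf{p}, r)$ lay inside the bounded region enclosed by a discarded connected component $C$, the inscribed-sphere comparison (expand a sphere centered at $\mathbf{p}$ until it first touches $C$) produces a point of $C$ with $|H| \leq n/r$, which is incompatible with the defining pointwise inequality $|H| \geq H_\mathrm{th}$ as soon as $H_\mathrm{min}(r) > n/r$. Combining, $H_\mathrm{min}(r) = n r^{-1}$ suffices, matching the value in Claim 1; I expect no real obstacle, as the only content specific to the outward case is the topological observation in the first step, which is already prepared by the remark following Definition \ref{genericnecktocap}.
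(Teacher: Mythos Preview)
Your argument is correct and proves the claim, but it does more work than the paper and yields a weaker constant. Your step (1) matches the paper exactly. For step (2), however, the paper does not invoke the inscribed-sphere argument at all: it observes that at an outward neck the \emph{entire} modification---neck replacement \emph{and} the discarding of the associated high-curvature component---takes place inside the local interior $K^-$, which is the global \emph{exterior} of $M_{\mathbb{H},t^-}$. Since $K^+\subseteq K^\#\subseteq K^-$ in the local sense, passing to global complements gives $\textrm{int}(M_{\mathbb{H},t^-})\subseteq\textrm{int}(M_{\mathbb{H},t^\#})\subseteq\textrm{int}(M_{\mathbb{H},t^+})$, and the conclusion holds for \emph{every} value of $H_\mathrm{th}$, not just $H_\mathrm{th}>n/r$. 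Concretely, the discarded cap at an outward neck bounds a region that was already exterior to the flow, so removing that boundary component can only enlarge the global interior; the scenario ``$B(\mathbf{p},r)$ lies inside the region enclosed by a discarded component'' that you guard against is vacuous here, because that region is disjoint from $\textrm{int}(M_{\mathbb{H},t^\#})\supseteq B(\mathbf{p},r)$ to begin with. Your detour through Claim~1(2) is harmless---the statement only asks for some finite $H_\mathrm{min}(r)$---but it obscures the point that the outward case is strictly easier than the inward one.
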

 \begin{proof}
 Recall, at outward necks, the `interior' of the neck is exterior to the flow. The caps are glued inside the cylinder. Thus, if $B(\textbf{p},r)\subset \textrm{int}(M_{\mathbb{H},t^-})$, then we have $B(\textbf{p},r)\subset \textrm{int}(M_{\mathbb{H},t^+})$ for all values of $H_{th}$.
 \end{proof}
 For the other barrier, we consider $B(\textbf{p},r)\subset \textrm{ext}(M_{\mathbb{H},t^-})$. The proofs are identical, but for the oppositely oriented necks.

  To illustrate how the above claims prove the distance is non-decreasing, consider the following. Fix $\varepsilon>0$ and choose the surgery parameter $\mathbb{H}$ such that $\hthic>H_\mathrm{min}(\varepsilon)$. Let $t$ be the first time of surgery. We now consider the balls $B(\mathbf{x},d(\mathbf{x},M_{\mathbb{H},t^-}))$, where $d(\cdot,M_{\mathbb{H},t^-})$ is the distance of a point to the hypersurface $M_{\mathbb{H},t^-}$, for each point $\mathbf{x}$ in the $t$ timeslice of $\mathcal{M}_{\pm\varepsilon}$. Clearly any such ball will lie entirely on one side of $M_{\mathbb{H},t^-}$. Since flows with surgery are simply smooth flows up to time $t$, the avoidance principle shows that the radius, $r=r(\mathbf{x})$, of each ball must have $r\geq\varepsilon$. We deduce from the above claims that each of the discussed balls in the interior (resp. exterior) of $M_{\mathbb{H},t^-}$ will be interior (resp. exterior) to $M_{\mathbb{H},t^+}$ after surgery, as $\hthic>H_\mathrm{min}$. Thus, the distance of $M_{\mathbb{H},t^+}$ to either barrier at time $t$ cannot be less than that of $M_{\mathbb{H},t^-}$. Since a surgical flow is a Brakke flow between surgery times, the avoidance principle allows for the argument to be repeated at all later surgery times. We conclude the distance between the barriers and the surgical flow is non-decreasing along the entire flow.
 
\end{proof}
\begin{remark}
 Interior and exterior are well defined because we are considering smooth hypersurfaces at times of surgery. Note, the property of `separating' the inner and outer barriers is preserved through surgery, in the sense that at any time, any path connecting the inner and outer barriers must pass through the flow with surgery. In addition, such a separation property is valid for all times after the last surgery by our choice to continue the surgery flow as the unit-regular cyclic (mod 2) Brakke flow whose support is the outer flow.
\end{remark}
\begin{corollary}[Hausdorff Convergence]\label{Hausdorff}
Taking the limit as $\hthic\to \infty$, the weak flows with surgery from $M$ converge to the level set flow from $M$ in the Hausdorff sense.
\end{corollary}

\begin{proof}
Recall, we use the convention that $M_{+\varepsilon}$ is interior to $M$.
Let $U$ be the compact set bounded by $M$, and $U'=\overline{U^c}$. Similarly, denote $U_{\pm\varepsilon}$ as the compact sets with $\partial U_{\pm\varepsilon}=M_{\pm\varepsilon}$, and $U'_{\pm\varepsilon}=\overline{U_{\pm\varepsilon}^c}$.
It is clear that for all $\varepsilon_1>\varepsilon_2>0$ we have
\begin{align*}
    U_{-\varepsilon_1}\supset U_{-\varepsilon_2}\supset \ &U \supset U_{+\varepsilon_2}  \supset U_{+\varepsilon_1}\\
    U'_{+\varepsilon_1}\supset U'_{+\varepsilon_2}\supset \ &U'\supset U'_{-\varepsilon_2} \supset U'_{-\varepsilon_2}
\end{align*}
Using the notation of \cite{hershwhite}, we denote the space-time track of the level set flow from $U,U'$ as $\mathcal{U},\mathcal{U}'$. We have 
\begin{align*}
    \mathcal{U}_{-\varepsilon_1}\supset \mathcal{U}_{-\varepsilon_2}\supset\ &\mathcal{U} \supset \mathcal{U}_{+\varepsilon_2} \supset \mathcal{U}_{+\varepsilon_1}\\
     \mathcal{U}'_{+\varepsilon_1}\supset \mathcal{U}'_{+\varepsilon_2}\supset\ &\mathcal{U}' \supset \mathcal{U}'_{-\varepsilon_2}\supset \mathcal{U}'_{-\varepsilon}
\end{align*}
By Lemma \ref{convergence}, we can take $\varepsilon>0$ small enough such that $\mathcal{M}_{\pm \varepsilon}$ has only spherical and neck-pinch singularities. Thus, the level set flow from $M_{\pm\varepsilon}$ does not fatten, and hence $\partial \mathcal{U}_{+\varepsilon}=\partial \mathcal{U}'_{+\varepsilon}=\mathrm{supp}(\mathcal{M}_{+\varepsilon})$. 

We define the closed sets $\mathcal{K}_\varepsilon:=\mathcal{U}'_{+\varepsilon}\cap \mathcal{U}_{-\varepsilon}$ and $K(t):=\{x\in\mathbb{R}^{n+1} \ | \ (x,t)\in \mathcal{K}\}$.  Note, the space-time boundary of $\mathcal{K}_\varepsilon$ is $\partial \mathcal{K}_{\varepsilon}=\mathrm{supp}(\mathcal{M_{+\varepsilon}})\sqcup\mathrm{supp}(\mathcal{M_{-\varepsilon}})$. Recall, these flows are disjoint by the avoidance principle.

By Lemma \ref{barriers}, for every $\varepsilon>0$, we can find a $H(\varepsilon)<\infty$ such that any weak surgery flow $\surg$ with $\hthic>H$ avoids $\mathcal{M}_{\pm \varepsilon}$. Indeed, we see that $\surg\subset \mathcal{K}_\varepsilon$ and at every time $t\geq0$ where both $\mathcal{M}_{\pm\varepsilon}$ are non-empty, $\surg$ `separates', in the sense that any (space-like) curve joining $M_{+\varepsilon}(t)$ to $M_{-\varepsilon}(t)$  must pass through $M_{\mathbb{H},t}$. The corollary will follow immediately from the following claim.

\begin{claim}
 $\mathcal{K}_\varepsilon$ converges to $\mathrm{supp}(\mathcal{M})=\{(x,t)\in \mathbb{R}^{n+1}\times\mathbb{R} \ \mathrm{s.t.} \ x \in F_t(M)\}$ in the Hausdorff sense as $\varepsilon\to 0$.
\end{claim}
\begin{proof}
By construction, $\mathrm{supp}(\mathcal{M})\subset \mathcal{K}_\varepsilon$ for all $\varepsilon>0$, i.e. for all $\xi>0$, $\mathrm{supp}(\mathcal{M})$ is always in the $\xi$ neighbourhood of $\mathcal{K}_\varepsilon$.

Observe, for $\varepsilon_1>\varepsilon_2>0$, we have $\mathcal{K}_{\varepsilon_2} \subset\mathcal{K}_{\varepsilon_1}$. Thus, it is sufficient to show $\mathrm{supp}(\mathcal{M})\supseteq\cap_{\varepsilon\to 0}\mathcal{K}_\varepsilon$. (Clearly the reverse inclusion is true). We do this by showing $\mathcal{K}:=\cap_{\varepsilon\to 0}\mathcal{K}_\varepsilon$ defines a weak set flow from $M$.

Observe, at $t=0$, we have $\cap_{\varepsilon\to 0} K_\varepsilon(0)=M$, as $K_\varepsilon(0)=\{x\in\mathbb{R}^{n+1}\  | \ d(x,M)\leq\varepsilon \}$ and $M$ is closed. 

Given any smooth compact hypersurface $N$ that is disjoint from $M$, we can find an $\varepsilon>0$ such that $K_\varepsilon(0)\cap N = \emptyset$, simply by taking $\varepsilon\leq d(M,N)$. It is immediate from the definition of $\mathcal{K}_\varepsilon$ that it will be disjoint from the space-time track of the mean curvature flow from $N$. Indeed, $\mathcal{K}$ must avoid every smooth mean curvature flow that is initially disjoint with $M$. Thus, $\mathcal{K}$ defines a weak set flow from $M$. Since $\mathrm{supp}(\mathcal{M})$ is the space-time track of the level set flow, it must contain $\mathcal{K}$. This follows from the definition of the level set flow as the maximal weak set flow, see \cite{ilmanen}.

\end{proof}

Indeed, we have shown that the `gap' between $\mathcal{M}_{\pm\varepsilon}$, $\mathcal{K}_\varepsilon$, Hausdorff converges to $\mathrm{supp}(\mathcal{M})$ as $\varepsilon\to 0$. Since $\mathcal{M}_{\pm\varepsilon}$, the space-time boundary components of $\mathcal{K}_\varepsilon$, converge in the Brakke sense to $\mathcal{M}$, and any surgery flow with $\hthic>H(\varepsilon)$ will separate $\mathcal{M}_{\pm\varepsilon}$, we deduce $\lim_{\hthic\to \infty}\surg=\mathrm{supp}(\mathcal{M})$.

\end{proof}

Having shown Hausdorff convergence, our goal now is to establish graphical control of the weak surgery flows in the boundary neighbourhood of the $(\alpha,\beta)$ neighbourhood. This is achieved by establishing Brakke convergence in this region. We will actually show Brakke convergence on the full regular set. Consider for a moment a sequence of Brakke flows that converge in a Hausdorff sense to another Brakke flow. Improving the convergence to Brakke convergence is straight forward provided one can find a way to control multiplicity. See the proof of Proposition \ref{convergence2}, claim \ref{supportequal} onwards. Recalling the definition of an $(\alpha,\delta)$ Brakke-flow, Definition \ref{brokenflow}, inside any open space-time set that does not contain a surgery, an $(\alpha,\delta)$-Brakke flow is a unit-regular, cyclic (mod 2) Brakke flow. Thus, Brakke convergence will follow from understanding where, in a limiting sense, surgeries occur in our weak surgery flows. Indeed, we will show that the surgeries accumulate in the singular set of $\mathcal{M}$. Using what has been shown so far we can develop some intuition as why this is expected behaviour. 

Let $M^n\subset\mathbb{R}^{n+1}$ and $\mathcal{M}$ be as stated at the start of the current section. For the sake of simplicity, suppose further $\mathcal{M}$ encounters an isolated, non-degenerate neck-pinch singularity at the first singular time. Let $\surgi$ be a sequence of weak flows with surgery starting from $M$, with $\hthic^i \to \infty$. At the first time of surgery in the flow $\surgi$, we can identify a $\delta$-neck with centre $P_i$ and mean curvature $ H_{\surgi} (P_i)=\hneck^i$ that is about to under-go surgery. The sequence $\{P_i\}^\infty_{i=1}$ can be treated as a sequence of points in $\mathcal{M}$ since, by definition, the weak flows with surgery must agree with $\mathcal{M}$ up to their respective first surgery time. Since $\hneck^i\to\infty$, it is clear that the points $P_i$ must accumulate in the singular set at the first singular time. Whilst this argument works at the first time of surgery, it unfortunately cannot be applied at later surgery times, however, we can use the barriers begin to understand what is happening.  In the following we develop a general intuition, though it may be informative for the reader to keep in mind the specific example of the classic 2-convex dumbbell as the initial condition $M$ and $\mathcal{M}$ the outer flow from $M$.
First, we note $\varepsilon>0$ can be chosen small enough such that the barrier flows $\mathcal{M}_{\pm\varepsilon}$ also satisfy the canonical neighbourhood condition in our $(\alpha,\beta)$ neighbourhood. We may assume the barriers are moving monotonically towards their (global) interior inside $\Omega_{(\alpha,\beta)}$; in connected components of $\Omega_{(\alpha,\beta)}$ where flows are moving monotonically towards their exterior, simply exchange the roles of the inner and outer barriers. Secondly, we note that any weak flow with surgery (with sufficiently large $\hthic$) separates $\mathcal{M}_{\pm\varepsilon}$. Indeed, we have the set of inclusions outlined in Corollary \ref{Hausdorff}. Thus, by our avoidance principle, Lemma \ref{barriers}, surgeries can only occur in regions where the inner barrier is not present. Conversely, we see the outer barrier $\mathcal{M}_{-\varepsilon}$ can only pinch off into a cylindrical singularity or vanish in a spherical singularity in regions where the weak surgery flow is not present. From our canonical neighbourhood assumption, one expects the inner barrier to vacate the (ambient) interior of a $\delta$-neck in the weak surgery flow by translating like a bowl or passing through a singularity. Similarly, we expect the weak surgery flow would vacate the interior of a neck-like region in the outer barrier developing into a singularity by surgery \footnote{that is, of course, presuming that surgery is permitted according to the Definition \ref{brokenflow}.}. Indeed, this seems to indicate a correspondence of surgeries and singularities and thus one expects, along the sequence of weak surgery flows from $M$, for surgeries to accumulate in the singular set of $\mathcal{M}$.

Unfortunately, it is not clear that this picture is entirely correct. One possible issue is that there is no way to rule out a surgery neck developing in a weak surgery flow in such a way that is completely unrelated to the geometry of the barriers flows. This is possible as we have only shown the weak surgery flows (with large $\hthic$) remain Hausdorff-close to the original weak flow after the first surgery time. 
For the above heuristic to have rigorous meaning we need to be able to relate the geometry of the weak surgery flows back to that of the original flow. Indeed, this would rule out ‘gratuitous’ surgery necks forming in regions where we would expect low curvature.  One might hope to use pseudolocality to control the flow with surgery. Unfortunately, direct application of pseudolocality is obstructed by the surgeries, as the caps cannot be written as graphs over the necks they replace.  We will show in Proposition \ref{surgerypseudo} that the pseudolocality result as stated in \cite{pseudo} can be applied at a space-time point $X_0$ in a weak flow with surgery, with the caveat that surgeries must be performed at a scale much larger than the curvature at the point $X_0$. In order to repeatedly apply pseudolocality one must introduce further ingredients (see Remark \ref{pseudosnag}).

The purpose of the following lemma, Lemma \ref{lazybound}, is to define a scaling factor $\lambda := \frac{|A|(\mathbf{x}_0)}{\tilde{C}_2}$, such that when the flow is dilated by $\lambda$, the hypotheses of the pseudolocality, Theorem \ref{pseudo}, are satisfied, see Remark \ref{whyuseful}.

\begin{remark}
In Remark \ref{reasonforcontrol}, we discussed how the canonical neighbourhoods had to be chosen careful such that we always satisfy the hypotheses of the Haslhofer--Kleiner curvature estimates, Theorem \ref{hkcurv}, in the interior for a particular choice of $\Lambda$. We now pause to start fixing the value of our constants so we can use them in the following arguments. In particular, we fix a value for the required $\Lambda$.
\end{remark}

 We fix $\eta>0$ that satisfies the required gradient bound of the Ecker--Huisken graphical curvature estimate, Theorem \ref{ehest}. Taking this value of $\eta$ into Pseudolocality, Theorem \ref{pseudo}, fixes an initial Lipschitz bound $\varepsilon=\varepsilon(n,\eta)>0$ and radius $\vartheta=\vartheta(n,\eta)>0$. We hence take $\vartheta$ as the radius of the n-ball in the Ecker--Huisken estimate, Theorem \ref{ehest}, giving the constant $\tilde{C}_3=\tilde{C}_3(n,\theta,\vartheta)$. We will only ever apply this graphical curvature bound to a point over the origin of the ball, so the value of $\theta$ does not matter, so for the sake of simplicity take $\theta=1/2$. We can now fix $\Lambda = 10n\max\{\tilde{C}_3,1\}$ for application of the Haslhofer--Kleiner curvature estimate. As was discussed in Remark \ref{reasonforcontrol}, the value of $\Lambda$ needs to be fixed so it is certain we can apply the estimate at interior points of $\Omega_{(\alpha,\beta)}$. The reasoning for this choice of value for $\Lambda$ will become clear in the following theorems. Of course, fixing the value of $\Lambda$ fixes the value of $\tilde{C}_0=\tilde{C}_0(\alpha,\Lambda)<\infty$, the constant from the Haslhofer--Kleiner curvature estimate. Finally, taking  $\varepsilon$ given to us from pseudolocality and this value of $\tilde{C_0}$, we fix the value of $\tilde{C}_2=\varepsilon/\tilde{C}_0$, as per Lemma \ref{lazybound}.

In the following, constants will be denoted $\tilde{C}_k$ for some integer $k$ and cylinders\footnote{The cylinder has been defined in the appendix} will be denoted $\mathcal{C}_{r}$ for some radius $r>0$. Note also balls in the $(n+1)$-dimensional ambient space are denoted $B$, whilst balls of dimension $n$ in an affine subspace (i.e. a tangent space) will be denoted $B^n$.

\begin{lemma}\label{lazybound}
Let $\surg$ be a weak flow with surgery and suppose $X_0=(\mathbf{x}_0,t_0)\in\surg\cap\overline\Omega_{(\alpha,\beta)}$. Suppose further $t_0\leq t_F$, where $t_F$ is the last surgery time. 

For every $\varepsilon>0$, let $\tilde{C}_2(\alpha,\Lambda,\varepsilon)=\frac{\varepsilon}{\tilde{C}_0(\alpha,\Lambda)}$, where $\tilde{C}_0$ is the constant from the Haslhofer--Kleiner curvature estimate. Then the hypersurface $\lambda (M_{t_0}-\mathbf{x}_0)$, with $\lambda = \frac{|H|(\mathbf{x}_0)}{\tilde{C}_2}$, has 
\begin{align}
    \sup_{\lambda M_{t_0} \cap{B(\mathbf{0},1)}}|A|&\leq \varepsilon\\
    \sup_{\lambda M_{t_0} \cap{B(\mathbf{0},1)}}\sqrt{1+|Du|^2} &< 1+\varepsilon
\end{align}
Where $u(\mathbf{x})$ is a function on the tangent space at $0$ such that $\lambda (M_{t_0}-\mathbf{x}_0)\cap \mathcal{C}_1(\mathbf{0})=\mathrm{graph}(u)$ and $M_{t_0}$ is the $t=t_0$ time-slice of $\surg$.
In particular, we note that the above show that the Lipschitz constant of $u$ is bounded by $\varepsilon$.
\end{lemma}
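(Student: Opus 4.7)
The plan is to apply the Haslhofer--Kleiner curvature estimate (Theorem \ref{hkcurv}) at $X_0$, then rescale, and finally derive the graphical bound from the resulting $C^0$ curvature bound. The setup of Section 3 is tailor-made for this. By Remark \ref{reasonforcontrol}, the weak surgery flow on $\overline\Omega_{(\alpha,\beta)}$ up to time $t_F$ is locally $\alpha$-noncollapsed and $\beta$-uniformly $2$-convex, and in a parabolic neighbourhood of $X_0$ of scale $\xi|H(X_0)|^{-1}$ it behaves as an $(\alpha,\delta)$-flow in the sense of Haslhofer--Kleiner. Since $H(X_0) \leq |A|(X_0)$, setting $r = 1/|A|(\mathbf{x}_0)$ ensures $H(X_0) \leq r^{-1}$, and Theorem \ref{hkcurv} yields
\[ \sup_{P(X_0, \Lambda r) \cap \partial \mathcal{M}_\mathbb{H}} |A| \leq \tilde{C}_0 r^{-1} = \tilde{C}_0 |A|(\mathbf{x}_0). \]

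The first claimed bound then follows by scaling. With $\lambda = |A|(\mathbf{x}_0)/\tilde{C}_2 = \tilde{C}_0 |A|(\mathbf{x}_0)/\varepsilon$, the unit ball $B(\mathbf{0},1)$ in rescaled coordinates pulls back to $B(\mathbf{x}_0, (\varepsilon/\tilde{C}_0)r)$, which is contained in $B(\mathbf{x}_0,\Lambda r)$ as long as the $\Lambda$ fixed in Section 4 is at least $\varepsilon/\tilde{C}_0$. Hence
\[ \sup_{\lambda M_{t_0}\cap B(\mathbf{0},1)} |A|_\lambda \;=\; \lambda^{-1}\sup |A| \;\leq\; \tilde{C}_0 |A|(\mathbf{x}_0)/\lambda \;=\; \varepsilon. \]

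For the graphical bound, I would invoke the standard principle that uniformly small second fundamental form yields a small Lipschitz graph: for any $\mathbf{y}\in \lambda M_{t_0}\cap B(\mathbf{0},1)$, join it to the origin by a minimising geodesic $\gamma$ in $\lambda M_{t_0}$ of length at most a dimensional constant, along which $|\nabla_\gamma \nu|\leq |A|\leq \varepsilon$. Integrating shows the unit normal rotates by at most $O(\varepsilon)$ in angle, so for $\varepsilon$ sufficiently small the hypersurface is a graph of some $u$ over the tangent plane at $\mathbf{0}$ inside the cylinder $C_1(\mathbf{0})$ with $\sqrt{1+|Du|^2} < 1+\varepsilon$, after absorbing a harmless constant into $\varepsilon$.

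The only point requiring real care is the applicability of the HK estimate: one must verify that $P(X_0, \Lambda r)$ genuinely lies in a region where the surgery flow is an $(\alpha,\delta)$-flow of Haslhofer--Kleiner type. This is precisely what the boundary neighbourhood $N_i$ (Definition \ref{bddnbhd}), the $\delta$-graphicality in Definition \ref{brokenflow}(iii), and the maximum principle propagation of 2-convexity and noncollapsedness to the interior (Remarks \ref{item3} and \ref{reasonforcontrol}) are arranged to guarantee. Once that compatibility is in hand, the remainder is essentially a routine scaling computation together with elementary differential geometry, justifying the adjective ``lazy'' in the statement.
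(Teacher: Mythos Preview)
Your proposal is correct and follows the same approach as the paper: both apply the Haslhofer--Kleiner curvature estimate (Theorem~\ref{hkcurv}) at $X_0$ and deduce the conclusion by rescaling. The paper's own proof is far terser---it simply notes that $t_0\le t_F$ makes the flow smooth, invokes Theorem~\ref{hkcurv} with $\Lambda\ge 1$, and declares the claim immediate---so you have essentially written out the details the paper omits. One minor quibble: the inequality $H(X_0)\le |A|(X_0)$ is not true in general (Cauchy--Schwarz only gives $H\le \sqrt{n}\,|A|$), so you should either take $r=(\sqrt{n}\,|A|(\mathbf{x}_0))^{-1}$ or absorb the dimensional constant into $\tilde C_0$; this does not affect the argument.
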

\begin{proof}Since $t_0\leq t_F$, the surgery flow is certainly smooth, and thus we can apply the global curvature estimate, Theorem \ref{hkcurv}, with our choice of $\Lambda\geq 1$. The claim follows immediately.
\end{proof}

\begin{remark}\label{whyuseful} The existence of such a $\tilde{C}_2$ is noteworthy, as it is uniform across any $(\alpha,\delta)$-flow that satisfies the assumptions of Theorem \ref{hkcurv}. Indeed, this shows that the $\vartheta>0$ given to us in the following pseudolocality theorem (Theorem \ref{surgerypseudo}) is uniform, when working at the scale of mean curvature, across all weak surgery flows $\surg$ that satisfy the hypotheses of Theorem \ref{surgerypseudo}. This is required so limits may be taken.
\end{remark}

As mentioned previously, the surgeries obstruct the use of pseudolocality as stated in \cite{pseudo}. Following their argument, the result is only valid until the next surgery is performed. In addition to their proof, we need to show that if any surgeries are performed in the forward time interval, then they are not performed in or near a large neighbourhood of the cylinder where we wish to apply pseudolocality. Indeed, this is true provided surgeries are done at a sufficiently large scale compared to the mean curvature of the point we wish to apply pseudolocality. The central idea is a combination of the Ecker--Huisken graphical curvature estimates and the Haslhofer—-Kleiner curvature estimate to bound the mean curvature in the cylinder below the surgery scale.

\begin{prop}\label{surgerypseudo}
   Let $X_0\in \surg \cap \Omega_{(\alpha,\beta)}$, $|A|(X_0)<\infty$. Pseudolocality can be applied to the flow $\surg$ around $X_0$, provided the surgery is done with parameter $\hneck>\frac{\tilde{C}_0\tilde{C}_3}{\tilde{C}_2}n^2 |H|(X_0)$. That is,
   \begin{align}
        \mathcal{D}_\lambda(\surg-X_0)\cap \mathcal{C}_{\vartheta}(\mathbf{0}), t\in [0,\vartheta^2)\cap[0,t_F]
   \end{align}
   is a smooth mean curvature flow, and can be written as a graph over $B^n_{\vartheta}$ with Lipschitz constant less than $\eta$ and height bounded by $\eta\vartheta$. $\lambda=\lambda(\alpha,\Lambda,\varepsilon,X_0)$ is as in the above claim. $t_F$ denotes the final time of surgery in the dilated flow. Moreover, since $\surg$ is continued as a Brakke flow after the final time of surgery, we also deduce
   \begin{align}
        \mathcal{D}_\lambda(\surg-X_0)\cap \mathcal{C}_{\vartheta}(\mathbf{0}), t\in [0,\vartheta^2)\cap[0,t_\mathrm{Ext}]
   \end{align}
   is a unit-regular, cyclic (mod 2), integral Brakke flow, and can be written as a graph over $B^n_{\vartheta}$ with Lipschitz constant less than $\eta$ and height bounded by $\eta\vartheta$.
   
\end{prop}

\begin{remark}
$\tilde{C}_0, \tilde{C}_3$ are expected to be large, $\tilde{C}_2$ is expected to be small. Thus, $\frac{\tilde{C}_0\tilde{C}_3}{\tilde{C}_2}$ is very large. This may give the impression that the theorem is weak. Its strength will come once applied to points with bounded curvature in a sequence of flows with degenerating surgery parameters. 
\end{remark}

\begin{proof} Suppose $X_0\in \surg \cap \Omega_{(\alpha,\beta)}$, $|A|(X_0)<\infty$.
We fix $\eta>0$, and let $\vartheta(\eta),\varepsilon(\eta)$ be those given by the pseudolocality Theorem \ref{pseudo}. Let $\lambda$ be as in lemma \ref{lazybound} with  $\varepsilon= \varepsilon(\eta)$.

 If the surgery flow is a smooth mean curvature flow in the forward time interval given by Theorem \ref{pseudo}, then there is nothing to check. Thus, let $\hat{\surg}=\mathcal{D}_\lambda(\surg-X_0)$, and suppose there are surgeries occurring in the time interval $[0,\vartheta^2)$. Note, there are only finitely many times to check in this interval, so we may enumerate them chronologically.

Let $t_1$ be the time of the first surgery in $\hat{\surg}$ after time $t=0$. It is sufficient to show that all surgeries are performed far from the set $\mathcal{C}_{1}(0)$ at time $t_1$, as this demonstrates the flow is simply a smooth mean curvature flow in $\mathcal{C}_{1}(0)\times[0,t_2)$ and thus the flow remains a graph in the cylinder $\mathcal{C}_{\vartheta}(0)\times[0,t_2)$, where $t_2$ is the next surgery time. 
\begin{remark}
These times correspond to surgeries in the dilated flow, not the original time scale.
\end{remark}

Since the flow is a mean curvature flow on $[0,t_1]$, we know from the classical pseudolocality result that $
    \hat\surg\cap \mathcal{C}_{\vartheta}(\mathbf{0})$ can be written as the graph of $u_t: B^n_{\vartheta}(0)\to \mathbb{R} $, for $ t\in [0,\vartheta^2)\cap[0,t_1]$.

Applying the Ecker--Huisken interior estimate for graphs, Theorem \ref{ehest}, to the function $u_t$ we establish the following bounds on curvature \begin{align}\label{goodbound}
    \sup_{B^n_{\theta\vartheta} (0)\times [0,t_1]}|A| \leq\tilde{C}_3(n,\theta,\vartheta)\sup_{B^n_{ \vartheta} (0)\times \{0\}} |A|=\tilde{C}_3 \varepsilon
\end{align}
for some constant $\tilde{C}_3$ depending only on $n,\theta,\vartheta$.

Let $X=(\mathbf{0},u_{t_1}(0),t_1)=(\mathbf{x},t_1)$, i.e.~ the point in the flow above the origin at time $t_1$. Equation \ref{goodbound} shows $|A|(X)\leq \tilde{C}_3 \varepsilon$. Applying the Haslhofer--Kleiner curvature estimate, Theorem \ref{hkcurv}, at the point $X$, we deduce that in the backward parabolic cylinder $P(X, \Lambda r)$ the curvature is bounded by $\tilde{C}_0 r^{-1}$, where $r^{-1}=
H(X)\leq \tilde{C}_3 \varepsilon n$ (and thus, $r\geq(\tilde{C}_3\varepsilon n)^{-1}$). Note we have used the standard inequality $|H|\leq n  |A|$.

As a simple consequence of the estimate in $P(X,\Lambda r)$, we have 
\begin{align*}
    \sup_{B_{\Lambda r}(\mathbf{x})\cap \hat{M}_{t_1}}|A|\leq\tilde{C}_0\tilde{C}_3\varepsilon n,
\end{align*} where $\hat{M}_{t_1}$ denotes the $t=t_1$ time slice of $\hat{\surg}$. Moreover, using $|H|\leq n |A|$ once again, we see 
\begin{align*}
    \sup_{B_{\Lambda r}(\mathbf{x})\cap \hat{M}_{t_1}}|H|\leq\tilde{C}_0\tilde{C}_3\varepsilon n^2.
\end{align*} We highlight that, since $\Lambda\geq10n\tilde{C}_3$, the curvature bound holds in $B(\mathbf{x},10\varepsilon^{-1})$, moreover $B(\mathbf{x},10\varepsilon^{-1})\supset\mathcal{C}_1(0)$. That is to say, the curvature bound holds for the weak flow with surgery contained in the cylinder $\mathcal{C}_1(0)$ at time $t_1$. 

By definition, surgery in $\surg$ was done at scale $\hneck$. Scaling our parameters accordingly, we deduce surgery in $\hat\surg$ is done at scale $\hat{H}_\mathrm{neck}=\lambda^{-1} \hneck=(\tilde{C}_2/|H|(x_0))\hneck>\tilde{C}_0\tilde{C}_3n^2>\tilde{C}_0\tilde{C}_3\varepsilon n^2$. Here, we have used our assumption that $\hneck>\frac{\tilde{C}_0\tilde{C}_3}{\tilde{C}_2}n^2 |H|(X_0)$ and that $\varepsilon<1$. Observe, from the bound on mean curvature in $B_{\Lambda r}(\mathbf{x})$, the mean curvature at every point $Y\in\hat\surg\cap( B_{10\varepsilon^{-1}}(x)\times\{t_1\})$ is below the threshold for surgery to be performed. In particular, any changes made at time $t_1$ do not affect the portion of the hypersurface $\hat{M}_{t_1}$ contained in $\mathcal{C}_1(0)$.  Hence, the flow $\hat\surg\cap(\overline{\mathcal{C}}_1(0)\times[0,t_2])$ is a smooth mean curvature flow, and the flow is graphical over $B^n_\vartheta(0)$ in $\mathcal{C}_\vartheta(0)\times[0,t_2]$.

This argument is then repeated at all future surgery times in $[0,\vartheta^2)\cap[0,t_F]$. The second claim follows immediately from the Brakke form of Theorem \ref{pseudo}, as $\surg$ is continued as a unit-regular integral Brakke flow after the final surgery time $t_F$.
\end{proof}

We now have the tools necessary to show surgeries accumulate in the singular set.

\begin{prop}\label{surgcontrol}
Let $M^n\subset\mathbb{R}^{n+1}$ and $\mathcal{M}$ be as above. Then, for every open neighbourhood $N$ of the singular set, there is a $H_{min}(N)<\infty$ such that if $\mathbb{H}$ has $\hthic>H_\mathrm{min}$, then all surgeries in $\surg$ occur inside this neighbourhood. 
\end{prop}
\begin{proof}
 The above statement is equivalent to the statement that, across a sequence of surgery flows with $H^i_\mathrm{th}\to \infty$, any sequence of centres of surgery necks, $X_i\in\surgi$, accumulates in the singular set $\mathfrak{S}$ of $\mathcal{M}$. 
 
 Suppose for contradiction that this is not the case. Let  $\mathcal{M}_{\mathbb{H}_i}$ be a sequence of $(\Balpha,\delta,\mathbb{H}_i)$-flows evolving from $M$ with $H^i_{th}\to \infty$. By the assumption we wish to contradict, we can find a sequence of points $X_i=(\mathbf{p}_i,t_i)\in\mathcal{M}_{\mathbb{H}_i}$ in $\delta$-necks where surgery is performed, with $H(X_i)=H^i_\mathrm{neck}$, that accumulate to some point $X_\infty=(\mathbf{x}_\infty,t_\infty)\in \mathfrak{S}^c$. It is clear that the sequence must accumulate to some point in $\mathrm{supp}(\mathcal{M})$ from Hausdorff convergence. Note that $t_\infty \neq t_\mathrm{Ext}$, as the regular set is empty at time of extinction. 
 
 \begin{claim} \label{graph}
 $X_\infty\notin \partial\Omega_{(\alpha,\beta)}$
 \end{claim}
 
 \begin{proof}
 
 Suppose $X_\infty$ were in the boundary of the chosen $(\alpha,\beta)$-neighbourhood. Item (iii) of Definition \ref{brokenflow} required a backward parabolic cylinder centred at each point in the boundary in which the surgery flow is a graph over the original flow. 
 This immediately rules out surgeries being performed in this neighbourhood, and thus preventing accumulation forward in time (i.e. $t_i<t_\infty$, for infinitely many $i$) or `spatially' ($t_i=t_\infty$, for infinitely many $i$) within a given time-slice to a point the boundary. Thus, it remains to check that surgeries cannot accumulate backward in time ($t_i>t_\infty$, for infinitely many $i$) to a point in the boundary.
 
 We first prove a smooth convergence result. Again we recall Item (iii) of Definition \ref{brokenflow}. There is a backwards parabolic cylinder $P=P(X_\infty, 2\xi H_\mathrm{bdd})$ centred at $X_\infty$ in which we can write $\surgi$ as a graph over $\mathcal{M}$. This is true for all $i$. As mentioned above, being a small graph over the original flow rules out surgeries occurring in this parabolic cylinder. Clearly $\surgi\cap P$ is a sequence of smooth unit-regular Brakke flows, and thus converge to some limiting Brakke flow $\mathcal{N}$ in $P$. Hausdorff convergence shows that the support of $\mathcal{N}$ is  $\textrm{supp}(\mathcal{M}\cap P)$. Finally, we note that being a small graph controls the multiplicity of the flows with surgery and thus the sequence converges locally smoothly in $P$ to $\mathcal{M}\cap P$ by White regularity.
 
 The smooth convergence is now used to show pseudolocality can be applied in such a way that is comparable across all the flows with surgery for sufficiently large $i$.  
 Dilating by $\lambda=|H|(X_\infty)/\tilde{C}_2$ around the point $X_\infty$, and following the proof of Lemma \ref{lazybound}, we deduce $\tilde{M}_0$, the $t=0$ time slice of the dilated flow $\tilde{\mathcal{M}}=\mathcal{D}_\lambda(\mathcal{M}-X_\infty)$, can be written as the graph of some smooth function $u$ over $B=B^n_1(0)$, the ball of radius 1 in the tangent space at $0$, with $|A|<\varepsilon$. Similarly, we set $\tilde{\mathcal{M}}_i=\mathcal{D}_{\lambda_i}(\mathcal{M}_{\mathbb{H}_i}-X_\infty)$, $\lambda_i=|H_i|(X_i)/\tilde{C}_2$. Since the (un-dilated) flows converged smoothly around $X_\infty$, we deduce $\lambda_i\to\lambda$. Moreover, the dilated flows $\tilde{\mathcal{M}}_i$ converge smoothly to $\tilde{\mathcal{M}}$ in $P$, thus there is an $I<\infty$ such that for $i\geq I,$ the time $t=0$ time-slice, $\tilde{M}_{i,0}$, can be written as a graph of the function $u_i:B\to\mathbb{R}$, where $B$ is the same ball in the tangent space to $\tilde{M}_0$ at $0$, and $u_i\to u$ smoothly in $B$. Thus, by the Brakke form of the pseudolocality result for flows with surgery, Proposition \ref{surgerypseudo}, no surgeries of the flow $\surgi$ occur in $\tilde{\surgi}\cap{\mathcal{C}_{\vartheta_i}(0)}\times([0,\vartheta_i^2)\cap[0,t_\mathrm{Ext}) )$. Recall, $\vartheta_i$ essentially depended on the curvature at $u_i(0)$ and the dimension. Since the hypersurfaces at time $t=0$ converge smoothly in some neighbourhood of the origin, there is a uniform $\vartheta>0$ such that for every flow, $\tilde{\mathcal{M}}_i\cap{\mathcal{C}_\vartheta(0)}\times([0,\vartheta^2)\cap[0,t_\mathrm{Ext}))$ is a unit-regular, cyclic (mod 2) Brakke flow. In particular, no surgeries occur. This contradicts our assumption that surgeries were accumulating from future times. 
 \end{proof}

It remains to check regular points in the interior  of $\Omega_{(\alpha,\beta)}$. In order to employ the above argument, we require knowledge that the weak surgery flows are graphical over $\mathcal{M}$ in some backwards parabolic cylinder. A priori, we have no control of the flow at points in the interior, other than information given by the maximum principle and Hausdorff convergence. To find such a neighbourhood, we will start at the boundary of $\Omega_{(\alpha,\beta)}$ and then repeatedly apply the pseudolocality theorem followed by the Haslhofer--Kleiner curvature estimate to work our way into the interior.

\begin{claim}
There is an open space-time neighbourhood of $X_\infty$ such that the flows $\surgi$ converge smoothly to $\mathcal{M}$.
\end{claim}

\begin{remark}\label{pseudosnag}
  If one were to just iterate pseudolocality, the forward time interval could shrink in a geometric progression.  The essence of the argument presented below is, given a point of low curvature, we find our forward neighbourhood from pseudolocality. We deduce convergence of the sequence of surgery flows to $\mathcal{M}$ in this forward neighbourhood. Applying the Haslhofer--Kleiner curvature estimate we show, for large $i$, no surgeries will be performed in a larger backward neighbourhood (centred at some future time, compared to the point we applied pseudolocality), and we can deduce convergence on this larger set. One is then in a position to apply pseudolocality at the same scale. 
\end{remark}
\begin{proof}
Consider a path $\gamma$ in $\mathrm{reg}(\mathcal{M})\cap \Omega_{(\alpha,\beta)}$ connecting $X_\infty$ to a point $X_0\in\partial \Omega_{(\alpha,\beta)}$. Say $\gamma:[0,T]\to \mathrm{reg}(\mathcal{M})$, $\gamma(0)=X_0, \gamma(T)=X_\infty$.
Since the flow is locally 2-convex, we can pick the point $X_0$ and translate in time such that $X_0=(\mathbf{x}_0,0)$, $\gamma(\tau)\in M_\tau$. We will write $\gamma(\tau)=(\mathbf{x}_\tau,\tau)$. The argument proceeds as follows:
\begin{itemize}
     \item Since the path $\gamma$ is compact, there exists some $\mathcal{A}<\infty$ such that \begin{align*}\max_{\tau\in[0,T]}|H_{\mathcal{M}}|(\gamma(\tau))\leq\mathcal{A}.\end{align*}
    
    \item Fix a small constant $\zeta>0$. Lemma \ref{lazybound} 
    implies $\tilde{M}_\tau=\mathcal{D}_\lambda(M_\tau-\gamma(\tau))$ can be written in $\mathcal{C}_1(0)$ as a graph over the ball $B^n_1(0)$ in the tangent space to $\tilde{M}_\tau$ at $0$, where $\lambda = \frac{\mathcal{A}+\zeta}{\tilde{C}_2}$. In particular, the hypotheses of Theorem \ref{pseudo} are satisfied and hence we can apply the Brakke formulation of pseudolocality to $\tilde{M}_\tau$ at $0$.
    \item We remark that the small constant $\zeta>0$ is present so we can rescale each $\surgi$ by the same factor. The plan is to use the same argument as in Claim \ref{graph}, with the only complication coming from wanting to have the forward neighbourhood be comparable at every point along $\gamma$. Consider a sequence of points $Y_i\in\surgi$ accumulating to $Y_\infty\in\gamma$, such that $|H_{\surgi}(Y_i)|\to |H_{\mathcal{M}}(Y_\infty)|$. Then, there exists an $I=I(\zeta)$, such that $i\geq I$ implies $|H_{\surgi}(Y_i)|<\mathcal{A}+\zeta$. The significance being one can choose a cylinder centred at $Y_\infty$ in which the conclusion of pseudolocality (Theorem \ref{pseudo} and Proposition \ref{surgerypseudo}) is valid for $\mathcal{M}$ and all $\surgi$ with $i\geq I(\zeta)$ after dilating by the common constant $\lambda=\frac{\mathcal{A}+\zeta}{\tilde{C}_2}$. 
    \item Returning to our main argument, we transform back to the un-dilated flow and deduce there is a uniform $\vartheta$ such that at each point $\gamma(\tau)\in\mathrm{reg}\mathcal(M)$, the flow $\mathcal{M}\cap\mathcal{C}(\tau)$ is graphical over the ball $B^n_{\lambda^{-1}\vartheta}(x_\tau)$ in the tangent space to $M_\tau$ at $\gamma(\tau)$. Where $\mathcal{C}(\tau)=\mathcal{C}_{\lambda^{-1} \vartheta}(x_\tau)\times([\tau,\tau+(\lambda^{-1}\vartheta)^2]\cap[0,T_\mathrm{Ext}))$.
    \item The path $\gamma$ is continuous and compact. Hence, we can find finitely many times $0=\tau_0<\tau_1< \cdots< \tau_N<T$ such that $\gamma([0,T])\subset\cup^N_{j=0}\mathcal{C}(\tau_j)$. Note that $\tau_N<T$. This will be important for applying the curvature estimates to the flows with surgery $\surgi$. Note further there must be `overlap' of the cylinders, in the sense $\gamma(\tau_i)\in \mathcal{C}(\tau_{i-1}), i\geq 1$.
    
    \item By our assumption, $\gamma(0)\in\partial\Omega_{(\alpha,\beta)}$. Examining the proof of Claim \ref{graph}, we can immediately deduce Brakke convergence of $\surgi \to \mathcal{M}$ in $\mathcal{C}(0)$. Indeed, for sufficiently large $i$, $\surgi\cap\mathcal{C}(0)$ is a Brakke flow (no surgeries occur in $\mathcal{C}(0)$). Multiplicity is controlled by our assumption the flows with surgery are graphical over the boundary.
    
    \item We can improve the regularity of the convergence. Recall, $\gamma(\tau_1)\in\mathcal{C}(0)$, thus $\surgi\to\mathcal{M}$ in a Brakke sense in some small backwards parabolic cylinder $P$ centred at $\gamma(\tau_1)$. We may suitably shrink $P$ such that $P\cap\mathcal{M}\subset\mathrm{reg}(\mathcal{M})$. Since $\mathcal{M}$ is smooth in $P$, we deduce smooth convergence of $\surgi\to\mathcal{M}$ in $P$ by White regularity. 
    
    \item We now prove an inductive step, allowing us to `move along' the path $\gamma$. Smooth convergence in $P$ centred at $\gamma(\tau_1)$ implies there is a sequence of points $Y_i=(\mathbf{y}_i,\tau_1)\in\surgi, Y_i\to \gamma(\tau_1), H_{\surgi}(Y_i)\to H_\mathcal{M}(\gamma(\tau_1))$. We can hence apply the Haslhofer--Kleiner curvature estimate to $\surgi$ at $Y_i$ as in Proposition \ref{surgerypseudo} to deduce no surgeries occur in the backwards parabolic cylinder $P(Y_i, \Lambda (H_{\surgi}(Y_i))^{-1})$. Applying the curvature estimate is permissible when $i$ is taken sufficiently large: the surgery necks accumulate at some time $T$ with $\tau_1<T$, thus for large $i$ we must have $\tau_1<t_{F_i}$, where $t_{F_i}$ is the final time of surgery in $\surgi$.
    \item In particular, we deduce smooth convergence in $P(\gamma(\tau_1, (H_{\mathcal{M}}(\gamma(\tau_1)))^{-1})$ since $\Lambda>1$. One is now in the position to apply the argument from Claim \ref{graph}.
    
    \item This argument can be repeated at each $\tau_j$, since $\tau_j\in\mathcal{C}(\tau_{j-1})$. In particular, we note that $\gamma(T)\in\mathcal{C}(\tau_N)$. Thus, again, taking $i$ sufficiently large, we deduce no surgeries of the flow $\surgi$ are performed near $\gamma(T)$, contradicting the claim that surgeries accumulated at $\gamma(T)=X_\infty$.
    
   \end{itemize}
   \end{proof}
    
   This concludes the proof, as we have shown surgeries cannot accumulate to regular points.

 \end{proof}

 We now state and prove our crucial convergence result. Note, in the above proof we have already established convergence inside $\Omega_{(\alpha,\beta)}$.
 
 \begin{prop}[Convergence away from singular set]\label{convergence2} Let $\mathcal{M}_{\mathbb{H}_i}$ be a sequence of $(\Balpha,\delta,\mathbb{H}_i)$ surgical flows derived from $M$, and suppose $\mathbb{H}_i$ is a sequence of surgery parameters with $H^i_{th}\to \infty$. Then, $\mathcal{M}_{\mathbb{H}_i}$ converges to $\mathcal{M}$ as Brakke flows on the complement of the singular set of $\mathcal{M}$.
 \end{prop}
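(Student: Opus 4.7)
The plan is to combine Theorem \ref{surgcontrol} (localization of surgeries to a neighbourhood of $\mathfrak{S}$), the $\varepsilon$-barriers of Lemma \ref{barriers}, and Ilmanen's compactness theorem for Brakke flows, then invoke the connectedness of the regular set (Theorem \ref{smallsing}) just as in the uniqueness theorem earlier in this section. Concretely, fix a compact set $K\Subset \mathbb{R}^{n+1,1}\setminus \mathfrak{S}$. Since $\mathfrak{S}$ is compact, there is an open neighbourhood $N$ of $\mathfrak{S}$ with $N\cap K=\emptyset$. By Theorem \ref{surgcontrol}, there exists $i_0$ so that for every $i\geq i_0$ no surgery of $\mathcal{M}_{\mathbb{H}_i}$ occurs in $K$; hence $\mathcal{M}_{\mathbb{H}_i}\lfloor K$ is an honest smooth unit-regular cyclic Brakke flow.

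Next I would use the barrier lemmas. Lemma \ref{barriers} forces $\mathcal{M}_{\mathbb{H}_i}$ to lie between the barriers $\mathcal{M}_{\pm\varepsilon}$ for any fixed $\varepsilon>0$ once $H^i_\mathrm{th}>H(\varepsilon)$, and Lemma \ref{convergence} gives $\mathcal{M}_{\pm\varepsilon}\to\mathcal{M}$ as $\varepsilon\to 0$. This pins down the supports: $\mathrm{supp}(\mathcal{M}_{\mathbb{H}_i})\to \mathrm{supp}(\mathcal{M})$ in the Hausdorff sense. Applying Ilmanen's compactness to $\mathcal{M}_{\mathbb{H}_i}\lfloor K$ (the uniform area bound comes from the avoidance inequalities together with Huisken's monotonicity) yields a subsequential unit-regular Brakke flow limit $\tilde{\mathcal{M}}$ on $K$ with $\mathrm{supp}(\tilde{\mathcal{M}})\subseteq \mathrm{supp}(\mathcal{M})$.

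To identify $\tilde{\mathcal{M}}$ with $\mathcal{M}$ on $K$, I would replay the argument from the uniqueness theorem proved earlier in this section. Because $\mathcal{M}$ has only spherical and neck-pinch singularities, stratification \cite{bw97} shows $\mathcal{H}^n_P(\mathfrak{S})=0$, so Theorem \ref{smallsing} gives $\mathrm{reg}\,\mathcal{M}=\widehat{\mathrm{reg}}\,\mathcal{M}$ is connected. Any regular point $X\in K$ can be connected by a path in $\mathrm{reg}\,\mathcal{M}$ to the initial slice, where $\tilde{\mathcal{M}}$ and $\mathcal{M}$ agree with density one. Unit-regularity of $\tilde{\mathcal{M}}$ and the density upper semi-continuity under Brakke convergence then force $\Theta_{\tilde{\mathcal{M}}}=1=\Theta_{\mathcal{M}}$ at every regular point of $\mathcal{M}$ in $K$; since $\mathcal{H}^n_P(\mathfrak{S})=0$, we conclude $\tilde{\mathcal{M}}=\mathcal{M}$ on $K$. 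The limit being independent of the subsequence promotes the convergence to a full limit. Finally, White's local regularity theorem upgrades the Brakke convergence to smooth convergence in a neighbourhood of every regular point of $\mathcal{M}$ in $K$.

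The main obstacle I expect is the first step: promoting Theorem \ref{surgcontrol}, which is stated as ``all surgeries occur in $N$'', to a statement about absence of surgeries on a full compact set $K$ disjoint from $\mathfrak{S}$ and producing a usable smooth surgery-free flow on $K$. In particular one has to verify that the portions of $\mathcal{M}_{\mathbb{H}_i}$ just after the last surgery, which are allowed to continue as a general unit-regular Brakke flow under Definition \ref{brokenflow}(vi), still obey the uniform mass bounds needed to apply Ilmanen compactness on $K$. This is handled by noting that outside $N$ the flow is trapped between the barriers $\mathcal{M}_{\pm\varepsilon}$ for every $\varepsilon$, hence has uniformly bounded $n$-area by Huisken's monotonicity applied to the barrier flows.
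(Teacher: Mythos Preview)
Your proposal is correct and follows essentially the same route as the paper's proof: Theorem \ref{surgcontrol} to clear surgeries away from $\mathfrak{S}$, Ilmanen compactness, the $\varepsilon$-barriers (Lemmas \ref{barriers} and \ref{convergence}) to pin down the support, and the connectedness of $\mathrm{reg}\,\mathcal{M}$ (as in the proof of Lemma \ref{convergence}) to force unit density. Two cosmetic points: the restricted flows are unit-regular Brakke flows on $K$ but not necessarily \emph{smooth} (Definition \ref{brokenflow}(vi) permits Brakke singularities after the final surgery, and Corollary \ref{smoothpoints} is only available \emph{after} this proposition), and it is cleaner, as the paper does, to take the subsequential limit on all of $\mathfrak{S}^c$ rather than a fixed $K$, so that the paths used to propagate unit density from the initial slice remain inside the domain of $\tilde{\mathcal{M}}$.
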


 \begin{proof} Recall that the singular set $\mathfrak{S}$ is closed in space-time, thus its complement, $\mathfrak{S}^c$, is open. Recall further, the definition of convergence of Brakke flows \cite{Brakke}, \cite{ilmanen}, is with respect to compactly supported functions. If $f\in C^1_c(\mathfrak{S}^c)$, then by definition we have $\textrm{supp}(f)\Subset \mathfrak{S}^c$. In particular, it is sufficient to verify the proposition on any connected open set $\Omega\Subset\mathfrak{S}^c$ that has non-trivial intersection with the initial timeslice. These properties are required to control the multiplicity of the Brakke flow as in Lemma \ref{convergence}.


 \begin{claim}
  For any open set $\Omega \Subset \mathfrak{S}^c$, there is an $I<\infty$ such that for $i>I$, no surgeries of the flow $\mathcal{M}_{\mathbb{H}_i}$ occur in $\Omega$.
 \end{claim}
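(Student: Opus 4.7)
The proof of the claim is a direct appeal to Theorem \ref{surgcontrol}. Since $\Omega \Subset \mathfrak{S}^c$ in the parabolic topology on spacetime, $\overline{\Omega}$ is a compact set disjoint from the closed (and compact) singular set $\mathfrak{S}$. A separation argument via the parabolic distance produces an open neighbourhood $N$ of $\mathfrak{S}$ with $N \cap \overline{\Omega} = \emptyset$. Theorem \ref{surgcontrol} then supplies a threshold $H_\mathrm{min}(N) < \infty$ such that every weak $(\Balpha, \delta, \mathbb{H})$-flow with $H_\mathrm{th} > H_\mathrm{min}(N)$ has all of its surgeries contained in $N$. Since $H^i_\mathrm{th} \to \infty$, there exists $I$ with $H^i_\mathrm{th} > H_\mathrm{min}(N)$ for every $i > I$, and for such $i$ no surgeries of $\mathcal{M}_{\mathbb{H}_i}$ occur in $\Omega \subset N^c$.

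Granted the claim, the full proposition follows by a routine Brakke-compactness argument, which is worth sketching for completeness. After discarding finitely many initial terms, each $\mathcal{M}_{\mathbb{H}_i}$ restricts to $\Omega$ as an ordinary unit-regular, cyclic (mod 2) integral Brakke flow, so Ilmanen's compactness extracts a subsequential Brakke limit $\tilde{\mathcal{M}}$ on $\Omega$. The Hausdorff convergence implied by the $\varepsilon$-barriers of Lemma \ref{barriers} forces $\mathrm{supp}(\tilde{\mathcal{M}}) \subseteq \mathrm{supp}(\mathcal{M})$. Unit-regularity of $\tilde{\mathcal{M}}$, combined with connectedness of the regular set of $\mathcal{M}$ (Theorem \ref{smallsing}) and propagation of unit density along paths back to the smooth initial time in the manner of the uniqueness theorem at the start of Section \ref{barriersandstability}, identifies $\tilde{\mathcal{M}} = \mathcal{M}\lfloor\Omega$. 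As the subsequential limit is unique, the full sequence converges, proving the proposition after testing against arbitrary $f \in C^1_c(\mathfrak{S}^c)$ and choosing $\Omega$ to contain $\mathrm{supp}(f)$.

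The real analytic work for this proposition has already been invested in Theorem \ref{surgcontrol}; the pseudolocality-with-surgery estimate (Proposition \ref{surgerypseudo}) together with the path argument that propagates graphical control from $\partial \Omega_{(\alpha,\beta)}$ into its interior via the connectedness of $\mathrm{reg}(\mathcal{M})$ is precisely what is needed. Given that machinery, the claim reduces to topological separation and the remainder of the proposition to standard Brakke theory, so the only potentially subtle point here is the legitimacy of treating the restrictions $\mathcal{M}_{\mathbb{H}_i}\lfloor\Omega$ as bona-fide unit-regular Brakke flows — which is exactly what the claim ensures. Consequently no fresh obstacle arises at this stage.
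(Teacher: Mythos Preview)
Your proof of the claim is correct and follows essentially the same approach as the paper: separate $\overline{\Omega}$ from $\mathfrak{S}$ by an open neighbourhood $N$ and invoke Theorem \ref{surgcontrol} to force all surgeries into $N$ once $H^i_\mathrm{th}$ is large. The paper adds a (strictly unnecessary) case distinction on whether $\Omega$ meets $\Omega_{(\alpha,\beta)}$, but the substance is identical, and your subsequent sketch of the Brakke-compactness step also matches the paper's argument.
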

 \begin{proof}
This follows from Proposition \ref{surgcontrol}.
 
 If $\Omega\cap\Omega_{(\alpha,\beta)}=\emptyset$, we immediately know surgeries are not present in a neighbourhood for all $i>0$. It remains to check the case when $\Omega \cap\Omega_{(\alpha,\beta)}\neq \emptyset$. Without loss of generality, we consider $\Omega \subset \Omega_{(\alpha,\beta)}$. Since $\Omega \Subset \mathfrak{S}^c$, there is an open neighbourhood $N$ of $\mathfrak{S}$, with $\Omega\cap N= \emptyset$.
 
 Thus, by Proposition \ref{surgcontrol} we deduce all surgeries occur in $N$ for sufficiently large $i$, and hence none occur in $\Omega$.
 \end{proof}

   Applying Ilmanen's compactness result for Brakke flows, \cite{ilmanen}, there is a limiting unit-regular Brakke flow $\mathcal{N}$ such that, 
  \begin{align*}
  \lim_{i\to\infty}\mathcal{M}_{\mathbb{H}_i}\lfloor \Omega = \mathcal{N}.
  \end{align*}
 
 \begin{claim}\label{supportequal}
  $\textrm{supp}(\mathcal{N})=\textrm{reg}(\mathcal{M})\cap\Omega$
 \end{claim}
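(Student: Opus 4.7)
The plan is to prove both inclusions $\textrm{supp}(\mathcal{N})\subseteq \textrm{reg}(\mathcal{M})$ and $\textrm{reg}(\mathcal{M})\subseteq \textrm{supp}(\mathcal{N})$. The first is immediate from the $\varepsilon$-barriers, while the second will follow from a clopen argument on $\textrm{reg}(\mathcal{M})$, which is connected by Theorem \ref{smallsing}.

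For the inclusion $\textrm{supp}(\mathcal{N})\subseteq \textrm{reg}(\mathcal{M})$, I would apply Lemma \ref{barriers}: each $\mathcal{M}_{\mathbb{H}_i}$ is sandwiched between $\mathcal{M}_{+\varepsilon}$ and $\mathcal{M}_{-\varepsilon}$, and by Lemma \ref{convergence} these barriers converge to $\mathcal{M}$ as $\varepsilon\searrow 0$. Combined with Lemma \ref{distances}, this forces $\textrm{supp}(\mathcal{M}_{\mathbb{H}_i})$ to Hausdorff-converge to $\textrm{supp}(\mathcal{M})$, whence $\textrm{supp}(\mathcal{N})\subseteq \textrm{supp}(\mathcal{M})\cap \mathfrak{S}^c = \textrm{reg}(\mathcal{M})$.

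For the reverse inclusion, define $E = \textrm{reg}(\mathcal{M})\cap \textrm{supp}(\mathcal{N})$ and show it is both closed and open in $\textrm{reg}(\mathcal{M})$. Closedness is automatic since $\textrm{supp}(\mathcal{N})$ is closed. For openness, fix $X \in E$ together with a spacetime neighbourhood $\Omega\ni X$ with $\overline{\Omega}\cap \mathfrak{S} = \emptyset$. By Theorem \ref{surgcontrol} applied to a neighbourhood $N\supset\mathfrak{S}$ with $N\cap\overline{\Omega}=\emptyset$, there exists $I$ such that for $i>I$ no surgeries of $\mathcal{M}_{\mathbb{H}_i}$ occur in $\Omega$; hence $\mathcal{M}_{\mathbb{H}_i}\lfloor \Omega$ is a smooth unit-regular cyclic mod 2 Brakke flow. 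Combined with the Hausdorff convergence of supports from the first inclusion and the pseudolocality result Proposition \ref{surgerypseudo}, the Ecker--Huisken interior estimates yield locally smooth subsequential convergence of $\mathcal{M}_{\mathbb{H}_i}\lfloor\Omega$ to $\mathcal{M}\lfloor\Omega$ with multiplicity one. Consequently $\mathcal{N}\lfloor\Omega = \mathcal{M}\lfloor \Omega$, so an entire spacetime neighbourhood of $X$ lies in $E$. Finally, $E$ is nonempty because it contains all regular points at times strictly before the first singular time of $\mathcal{M}$, where smooth convergence of the surgery flows is classical. Connectedness of $\textrm{reg}(\mathcal{M})$ via Theorem \ref{smallsing} then gives $E=\textrm{reg}(\mathcal{M})$.

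The main delicate point is securing genuine smooth (rather than merely Brakke) convergence in a neighbourhood of a regular point, with multiplicity one. This hinges crucially on Theorem \ref{surgcontrol} to localise away from surgeries; once surgery is absent, the iteration of pseudolocality with curvature estimates along a path to the initial data (as in the second claim of the proof of Theorem \ref{surgcontrol}) promotes the Brakke convergence to smooth convergence, while the barriers rule out any spurious higher-multiplicity sheet appearing in $\mathcal{N}$.
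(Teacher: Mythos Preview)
Your argument is correct, but it takes a substantially more elaborate route than the paper. For the inclusion $\textrm{supp}(\mathcal{N})\subseteq\textrm{reg}(\mathcal{M})$ you and the paper agree: the $\varepsilon$-barriers sandwich every $\mathcal{M}_{\mathbb{H}_i}$ and converge to $\mathcal{M}$ by Lemma~\ref{convergence}. For the reverse inclusion, however, the paper avoids your clopen/connectedness argument entirely. It observes that each $\mathcal{M}_{\mathbb{H}_i}$ not only avoids but \emph{separates} $\mathcal{M}_{+\varepsilon_j}$ from $\mathcal{M}_{-\varepsilon_j}$; this separation persists in the limit, and since the barriers converge to $\mathcal{M}$ (Lemma~\ref{convergence}) one concludes directly, via upper semicontinuity of Gaussian density, that every regular point of $\mathcal{M}$ lies in $\textrm{supp}(\mathcal{N})$. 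No appeal to Theorem~\ref{smallsing}, pseudolocality, or Ecker--Huisken estimates is needed at this stage.

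What your approach buys is that you effectively establish smooth multiplicity-one convergence in a neighbourhood of each regular point along the way, which is precisely the content of the \emph{next} claim in the paper ($\mathcal{N}=\mathcal{M}\lfloor\textrm{reg}(\mathcal{M})$ as Brakke flows). So your proof conflates the two claims into one longer argument; the paper instead isolates the purely topological support statement with a two-line barrier-separation argument, and only afterwards upgrades to equality of densities using the connectedness of $\textrm{reg}(\mathcal{M})$.
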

 \begin{proof}
 The claim follows immediately from Corollary \ref{Hausdorff}. In particular, $\textrm{supp}(\mathcal{N})$ is connected by the result of \cite{ccms20}.
  \end{proof}
  \begin{claim}
   $\mathcal{N}=\mathcal{M}\lfloor{\Omega}$ as unit-regular Brakke flows.
  \end{claim}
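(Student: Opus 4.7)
The plan is to leverage the already-established coincidence of supports together with the connectedness result from Theorem \ref{smallsing} to pin down the multiplicity of $\mathcal{N}$ to one. Since $\mathcal{N}$ arises as an Ilmanen limit of unit-regular, cyclic (mod 2) integral Brakke flows $\mathcal{M}_{\mathbb{H}_i}\lfloor\mathfrak{S}^c$ (this is permissible by the previous claim, which guarantees that for any precompact $\Omega \Subset \mathfrak{S}^c$ eventually no surgeries occur in $\Omega$, so that on such $\Omega$ the flows $\mathcal{M}_{\mathbb{H}_i}$ are genuine unit-regular cyclic mod 2 Brakke flows), $\mathcal{N}$ inherits the unit-regular property. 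Combined with $\operatorname{supp}(\mathcal{N})=\operatorname{reg}(\mathcal{M})$, at almost every time the time slice $\mu^{\mathcal{N}}_t$ is supported on the smooth hypersurface $M_t \cap \operatorname{reg}(\mathcal{M})$, and hence the associated varifold must be of the form $k(\mathbf{x},t)\,\mathcal{H}^n\lfloor M_t$ for some positive integer multiplicity $k$.

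Next I would show that $k$ is locally constant on $\operatorname{reg}(\mathcal{M})$. By Brakke regularity applied to the unit-regular flow $\mathcal{N}$, at any point $X\in\operatorname{reg}(\mathcal{M})$ the flow $\mathcal{N}$ agrees in a parabolic neighbourhood with $k(X)$ copies of the smooth flow $\mathcal{M}$. Upper semicontinuity of Gaussian density together with integrality then forces $k$ to be locally constant on the open smooth set $\operatorname{reg}(\mathcal{M})$. Invoking Theorem \ref{smallsing} --- the connectedness of $\operatorname{reg}(\mathcal{M})$ --- the multiplicity $k$ is in fact a single integer throughout $\operatorname{reg}(\mathcal{M})$.

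To show $k=1$, I pick any point in $\operatorname{reg}(\mathcal{M})$ where smooth convergence of $\mathcal{M}_{\mathbb{H}_i}$ to $\mathcal{M}$ has already been established. A convenient choice is a point at the initial time $t=0$, where by hypothesis $M$ is smoothly embedded and no surgery has yet occurred, so $\mathcal{M}_{\mathbb{H}_i}$ coincides with $\mathcal{M}$ smoothly on a short parabolic neighbourhood of $M\times\{0\}$. Alternatively one can use a point on $\partial\Omega_{(\alpha,\beta)}$, where Definition \ref{brokenflow}\,(iii) enforces a $\delta$-graphical representation of $\surgi$ over $\mathcal{M}$ and hence locally smooth convergence with multiplicity one. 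Either choice yields $k=1$ at that point, and hence $k\equiv 1$ on $\operatorname{reg}(\mathcal{M})$ by the preceding paragraph.

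The main obstacle in this plan is purely bookkeeping: verifying that the Ilmanen limit $\mathcal{N}$ really is unit-regular in a strong enough sense to conclude integral multiplicity on the smooth locus. This follows from the fact that unit-regularity passes to Brakke limits (as in \cite{white21, bw20}) together with cyclic (mod 2) being preserved under Brakke convergence, both of which apply here because on precompact subsets of $\mathfrak{S}^c$ the approximators $\mathcal{M}_{\mathbb{H}_i}$ are eventually unit-regular cyclic mod 2 Brakke flows by the previous claim. Once these properties are noted, the constant multiplicity argument closes, yielding $\mathcal{N}=\mathcal{M}\lfloor\operatorname{reg}(\mathcal{M})$ as unit-regular Brakke flows.
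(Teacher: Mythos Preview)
Your proposal is correct and follows essentially the same approach as the paper: both use the previously established equality of supports, the connectedness of $\operatorname{reg}(\mathcal{M})$ from Theorem \ref{smallsing}, and smooth convergence at the initial time to pin the multiplicity to one everywhere. The paper's proof is simply a terse ``arguing as in the proof of Lemma \ref{convergence}'', which your write-up unpacks explicitly.
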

  \begin{proof}
    All that remains is to check $\mathcal{N}$ does not develop higher multiplicity. By the above, $\mathrm{supp}(\mathcal{N})$ is connected and has non-trivial intersection with the initial time-slice, thus $\mathcal{N}$ has unit density everywhere.
 \end{proof}
Thus, $\lim_{i\to\infty}\mathcal{M}_{\mathbb{H}_i}\lfloor \mathfrak{S}^c = \mathcal{M}$ as Brakke flows.

\end{proof}

As a corollary, one deduces the following results that control the behaviour of any potential singular points that form in weak surgery flows.
 
 \begin{corollary}
    
\label{smoothpoints}
 Let $\mathcal{M}_{\mathbb{H}_i}$ be a sequence of $(\Balpha,\delta,\mathbb{H}_i)$ surgical flows derived from the flow $\mathcal{M}$, and suppose $\mathbb{H}_i$ is a sequence of surgery parameters with $H^i_\mathrm{th}\to \infty$. If $X_i\in\mathcal{M}_{\mathbb{H}_i}$ is a sequence of singular points (i.e.~ points with Gaussian density $\Theta_{\mathcal{M}_{\mathbb{H}_i}}(X_i) \geq 1+\varepsilon_{\text{White}}$). Then $X_i$ accumulate in $\mathfrak{S}$, the singular set of $\mathcal{M}$.
 \end{corollary}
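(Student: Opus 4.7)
The plan is to argue by contradiction, directly leveraging Proposition \ref{convergence2} together with the upper semicontinuity of Gaussian density under Brakke flow convergence. Suppose, to the contrary, that the singular points $X_i$ do not accumulate in $\mathfrak{S}$. Since $\mathfrak{S}$ is compact, passing to a subsequence we obtain $X_i\to X_\infty$ with $X_\infty \in \mathfrak{S}^c$, and there exists an open neighbourhood $U$ of $X_\infty$ with $\overline{U}\cap \mathfrak{S}=\emptyset$.

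By Proposition \ref{convergence2}, the Brakke flows $\mathcal{M}_{\mathbb{H}_i}\lfloor U$ converge to $\mathcal{M}\lfloor U$. Since $X_\infty\notin \mathfrak{S}$, the point $X_\infty$ is either a regular point of $\mathcal{M}$ or lies outside $\textrm{supp}(\mathcal{M})$; in either case $\Theta_\mathcal{M}(X_\infty)\leq 1$. The upper semicontinuity of Gaussian density along converging Brakke flows (standard from Huisken's monotonicity formula and Ilmanen's compactness, see \cite{ilmanen}) then gives
\begin{equation*}
\limsup_{i\to\infty}\Theta_{\mathcal{M}_{\mathbb{H}_i}}(X_i) \leq \Theta_\mathcal{M}(X_\infty) \leq 1.
\end{equation*}
This directly contradicts the hypothesis that $\Theta_{\mathcal{M}_{\mathbb{H}_i}}(X_i)\geq 1+\varepsilon_{\text{White}}$ for all $i$.

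There is essentially no obstacle here; the result is a clean corollary, the principal input being Proposition \ref{convergence2}. The only point to be careful about is ensuring that for large $i$ the sequence $X_i$ lies inside the neighbourhood $U$ on which convergence holds, which is automatic from $X_i\to X_\infty \in U$. Note that we do not require $X_\infty$ to lie in the support of $\mathcal{M}$: if it did not, the $\varepsilon$-barriers of Lemma \ref{barriers} would give the contradiction directly, but the density argument above handles both cases uniformly.
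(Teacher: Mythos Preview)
Your proof is correct and follows essentially the same approach as the paper: argue by contradiction, invoke Proposition~\ref{convergence2} to obtain Brakke convergence near $X_\infty$, and contradict upper semicontinuity of Gaussian density. The paper's version is terser and implicitly assumes $X_\infty\in\mathrm{reg}(\mathcal{M})$, whereas you explicitly allow for $X_\infty\notin\mathrm{supp}(\mathcal{M})$ and handle both cases uniformly via $\Theta_\mathcal{M}(X_\infty)\leq 1$; this is a minor clarification rather than a different route.
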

 \begin{remark}
  Here $\varepsilon_\text{White}$ is the (dimension dependent) quantity of White regularity \cite{bw2005}. 
 \end{remark}
 \begin{proof}

 Suppose for contradiction a sequence of points $\{X_i\}_i^\infty$, satisfying the above hypothesis, accumulates at $X_\infty\in \textrm{reg}(\mathcal{M})$. 
 Then, by Proposition \ref{convergence2}, the weak surgery flows converge to $\mathcal{M}$ in a neighbourhood of $X_\infty$. In particular, $\Theta_{\mathcal{M}}(X_\infty)=1$.
 This is in contradiction to the upper semi-continuity of the density; taking the limit of densities we should have $\Theta_{\mathcal{M}}(X_\infty)\geq 1+\varepsilon_\text{White}$.

 \end{proof}
 
 \begin{corollary}\label{bigA}
 The above corollary holds also for regular points $X_i\in \mathcal{M}_{\mathbb{H}_i}$ where $$\lim_{i \to \infty}|A(X_i)|= \infty$$
 \end{corollary}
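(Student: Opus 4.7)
The plan is to argue by contradiction in direct analogy with the proof of Corollary \ref{smoothpoints}, upgrading the mode of convergence used. Suppose towards contradiction that $X_i = (\mathbf{x}_i,t_i)$ is a sequence of regular points of $\mathcal{M}_{\mathbb{H}_i}$ with $|A(X_i)| \to \infty$, but $X_i \to X_\infty \in \mathrm{reg}(\mathcal{M})\cup \mathfrak{S}^c$. Then $X_\infty \in \mathfrak{S}^c$, so by Proposition \ref{convergence2} we have $\mathcal{M}_{\mathbb{H}_i} \rightharpoonup \mathcal{M}$ as Brakke flows in a space-time neighbourhood $U$ of $X_\infty$, where $\mathcal{M}\lfloor U$ is smooth with Gaussian density identically $1$.

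The key step is to upgrade this weak convergence to smooth convergence in a neighbourhood of $X_\infty$, so that the second fundamental form passes to the limit. Since $X_\infty$ is a unit-density regular point of the limit, upper semi-continuity of Gaussian density implies $\Theta_{\mathcal{M}_{\mathbb{H}_i}}(X_i) \to 1$. Hence for $i$ large enough the points $X_i$ lie in the regular set of $\mathcal{M}_{\mathbb{H}_i}$ with density uniformly less than $1+\varepsilon_{\text{White}}$ at a definite scale, so Brakke--White regularity (\cite{bw2005}) provides uniform $C^k$ bounds on $\mathcal{M}_{\mathbb{H}_i}$ in a fixed parabolic neighbourhood of $X_\infty$. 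Combined with the Brakke convergence, this forces smooth convergence of $\mathcal{M}_{\mathbb{H}_i}$ to $\mathcal{M}$ in that neighbourhood.

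In particular $|A_{\mathcal{M}_{\mathbb{H}_i}}(X_i)| \to |A_{\mathcal{M}}(X_\infty)| < \infty$, contradicting the hypothesis $|A(X_i)| \to \infty$. Thus the sequence $X_i$ must accumulate in $\mathfrak{S}$, as claimed. The only subtlety—and where one needs to be careful—is to ensure that White's regularity theorem is applicable despite $\mathcal{M}_{\mathbb{H}_i}$ potentially containing surgery points nearby; this is handled precisely by Theorem \ref{surgcontrol}, which guarantees that for $i$ sufficiently large no surgeries occur in a fixed parabolic neighbourhood of $X_\infty$, so that each $\mathcal{M}_{\mathbb{H}_i}$ is a genuine unit-regular Brakke flow there and the regularity theorem applies unchanged.
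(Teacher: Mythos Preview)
Your proof is correct and follows essentially the same approach as the paper: assume the accumulation point $X_\infty$ lies outside $\mathfrak{S}$, invoke Proposition~\ref{convergence2} for Brakke convergence near $X_\infty$, upgrade to smooth convergence, and contradict $|A(X_i)|\to\infty$. The paper's own proof is terser---it simply asserts ``smooth convergence implies convergence of the second fundamental form''---whereas you spell out that the upgrade from Brakke to smooth convergence comes from White regularity together with Theorem~\ref{surgcontrol} (to clear surgeries out of a fixed neighbourhood); this is exactly the mechanism the paper relies on implicitly, and which it makes explicit elsewhere when deducing smooth convergence from Proposition~\ref{convergence2}.
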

 \begin{proof}
 Following the above proof, we note that smooth convergence implies convergence of the second fundamental form. $X_\infty$ is a smooth point, thus $|A|<\infty$, contradicting $\lim_{i \to \infty}|A(X_i)|\to \infty$.
 \end{proof}

 \section{Existence and Convergence for Smooth Mean Curvature flow with surgery}
Let $M^n\subset\mathbb{R}^{n+1}$ be a closed, smoothly embedded submanifold. Since $M$ is compact and smooth, we can find a $\gamma>0$ such that $|A|<\gamma$. We suppose there is a unique unit-regular Brakke flow $\mathcal{M}$ emerging from $M$ that encounters only spherical and neck-pinch singularities.  We fix
\begin{itemize}
     \item $0<\alpha<\textrm{min}\{\alpha_{cyl},\alpha_{sphere},\alpha_{oval},\alpha_{bowl}\}$.
     \item $0<\beta<\text{min}\{\beta_\text{sphere},\beta_\text{cylinder},\beta_\text{bowl},\beta_\text{oval}\}$.
 \end{itemize} Let $\Balpha=(\alpha,\beta,\gamma)$.
 Additionally, we take $\delta>0$ small enough that all the arguments of Haslhofer--Kleiner \cite{hksurg} hold and to satisfy item (iii) of Definition \ref{brokenflow} and Remark \ref{reasonforcontrol}. For the sake of completeness, we also fix a suitable standard surgical cap, suitable cap separation parameter and the value of $\Lambda$ as in Section 4.

 \begin{theorem}[Surgery at the first singular time] \label{firstsurg}
 Let $\mathcal{M}$ be as above. Let $\Omega_1$ be the union of the connected components of $\Omega_{(\alpha,\beta)}$ containing the first singular time. Let $T_1>0$ be the first singular time of the flow outside $\Omega_1$. Then for every $\varepsilon>0$, the parameters $H_\mathrm{min}(M)<\infty$ and $\Theta(M)<\infty$ can be chosen (depending only on the initial hypersurface) such that the $(\Balpha,\delta,\mathbb{H})$ weak surgery flow $\mathcal{M}_\mathbb{H}$ is a smooth mean curvature flow with surgery on $[0,T_1-\varepsilon)$.

 \end{theorem}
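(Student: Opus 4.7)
The plan is as follows. Since $\mathcal{M}$ is smooth on $[0, t_*)$, where $t_* \in \Omega_1$ is its first singular time, the weak surgery flow $\mathcal{M}_\mathbb{H}$ coincides with the smooth MCF from $M$ up to $t_*$, and thereafter can be continued only via Haslhofer--Kleiner-style surgery inside $\Omega_1$. The goal is to choose $H_\mathrm{min}(M)$ and $\Theta(M)$ so that, on $[0, T_1 - \varepsilon)$, each surgery trigger occurs inside $\Omega_1$ where the Haslhofer--Kleiner procedure is legitimately applicable, and simultaneously the curvature never reaches $H_\mathrm{trig}$ outside $\Omega_1$.

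First I would set up the compactness. The set $K := \mathrm{supp}(\mathcal{M}) \cap \{0 \leq t \leq T_1 - \varepsilon\} \setminus \Omega_1$ is compact and contained in $\mathrm{reg}(\mathcal{M})$, so $\mathcal{M}$ enjoys a uniform curvature bound $|A| \leq A_0$ on it. Choose any open $N$ with $\mathfrak{S} \cap \{t \leq T_1 - \varepsilon\} \subset N \Subset \Omega_1$ and $\overline{N} \cap K = \emptyset$. Theorem \ref{surgcontrol} then provides a threshold $H_1 < \infty$ such that every surgery of any weak $(\Balpha, \delta, \mathbb{H})$-flow with $H_\mathrm{th} > H_1$ lies in $N$.

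For the surgery mechanics inside $\Omega_1$: the $(\alpha,\beta)$-neighborhood hypothesis gives $\beta$-uniform 2-convexity and $\alpha$-noncollapsedness of $\mathcal{M}$ there, and the $\delta$-graphicality condition on $N_1$ in Definition \ref{brokenflow} transplants these properties to $\mathcal{M}_\mathbb{H}$ along $\partial \Omega_1$. The Hamilton tensor maximum principle and Andrews's noncollapsing estimate then propagate them inward, as indicated in Remark \ref{reasonforcontrol}, making $\mathcal{M}_\mathbb{H} \lfloor \Omega_1$ into an $(\alpha,\delta)$-flow in the Haslhofer--Kleiner sense. Hence Theorem \ref{hkcurv} applies, and Theorem \ref{hkexist} determines a $\Theta(\Balpha, \delta)$ for which the surgery procedure is executable exactly as in \cite{hksurg}.

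The main obstacle is ruling out that $|H|$ reaches $H_\mathrm{trig}$ outside $\Omega_1$ on $[0, T_1 - \varepsilon)$; the weak flow definition explicitly permits this, but it would violate the smooth-MCF-with-surgery conclusion. I would argue by contradiction: if no finite $H_\mathrm{min}$ suffices, there is a sequence $\mathbb{H}_i$ with $H^i_\mathrm{th} \to \infty$ and points $X_i \in \mathcal{M}_{\mathbb{H}_i} \setminus \Omega_1$ with $t_i < T_1 - \varepsilon$ and $|A|(X_i) \to \infty$. After passing to a subsequence, $X_i \to X_\infty \in \overline{K}$. Corollary \ref{bigA} forces $X_\infty \in \mathfrak{S}$, contradicting $\mathfrak{S} \cap \{t \leq T_1 - \varepsilon\} \subset N$ together with $\overline{N} \cap K = \emptyset$. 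Hence $|A|$, and therefore $|H|$, admits a uniform bound outside $\Omega_1$; choosing $H_\mathrm{min}$ above this bound and then $H_\mathrm{trig} > H_\mathrm{neck} > H_\mathrm{th} \geq H_\mathrm{min}$ in accordance with $\Theta$ completes the proof.
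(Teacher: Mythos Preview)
Your approach is essentially the paper's: use the convergence results (Theorem~\ref{surgcontrol}, Corollary~\ref{bigA}) to confine surgeries and high curvature to $\Omega_1$, propagate $\alpha$-noncollapsedness and $\beta$-uniform 2-convexity inward from $\partial\Omega_1$ via the maximum principle, and then invoke the Haslhofer--Kleiner existence machinery inside $\Omega_1$. The contradiction argument via Corollary~\ref{bigA} to rule out $|H|=H_\mathrm{trig}$ outside $\Omega_1$ is exactly what the paper does, though the paper phrases it slightly differently (singularities of surgery flows lying within $\varepsilon$ in time of $\mathfrak{S}$).

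There is one step you treat as automatic that the paper isolates and proves separately. You write that ``the $\delta$-graphicality condition on $N_1$ in Definition~\ref{brokenflow} transplants these properties to $\mathcal{M}_\mathbb{H}$ along $\partial\Omega_1$.'' But Definition~\ref{brokenflow}(iii) does not assert $\delta$-graphicality; it says that surgery is \emph{only permitted} while $\delta$-graphicality holds, and otherwise the flow reverts to a possibly singular unit-regular Brakke flow (item (vi)). So to conclude that $\mathcal{M}_\mathbb{H}$ is a \emph{smooth} flow with surgery on $[0,T_1-\varepsilon)$, you must verify that $\delta$-graphicality on $N_1$ actually persists for $H_\mathrm{th}$ large. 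The paper does this as its Claim~5.1: since $N_1$ is bounded away from $\mathfrak{S}$, Proposition~\ref{convergence2} gives Brakke convergence $\mathcal{M}_{\mathbb{H}}\to\mathcal{M}$ on $N_1$, and White regularity upgrades this to smooth convergence, hence $\delta$-graphicality for $H_\mathrm{th}$ sufficiently large. Your curvature-bound argument on $K$ does not by itself yield this, since $\delta$-graphicality is a $C^{\lfloor 1/\delta\rfloor}$-closeness statement, not merely a curvature bound; you should make this step explicit.
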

 
 Compare the result of Mramor, \cite{mramor19}, where similar ideas are discussed for surgery in mean convex `patches' of non-compact flows.

 \begin{proof} Fix an $\varepsilon>0$ and stipulate that surgeries may only be performed in $\Omega_1$.
By Corollary \ref{bigA}, we know the singularities of surgery flows converge to the singular set of $\mathcal{M}$ as $\hthic\to\infty$. Thus, we can choose $H_\mathrm{min}<\infty$ sufficiently large that all singularities of a weak surgery flow with $H_\mathrm{th}>H_\mathrm{min}$ occur within $\varepsilon$ in time of the singularities of $\mathcal{M}$. Moreover, such singularities are contained in $\Omega_{(\alpha,\beta)}$ and are spherical or neck-pinch singularities. 

We initially fix the surgery ratio $\Theta<\infty$, this will be changed in due course.
 \begin{claim}
 For sufficiently large $H_\mathrm{min}$, any $(\Balpha,\delta,\mathbb{H})$-flow with $H_\mathrm{th}>H_\mathrm{min}$ is a $\delta$-graph over $\mathcal{M}$ in $N_1$ along the boundary of $\Omega_1$.
 \end{claim}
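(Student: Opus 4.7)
The plan is to argue by contradiction, combining the surgery localisation of Theorem \ref{surgcontrol} with the convergence statement of Proposition \ref{convergence2}. Suppose the claim fails: then there is a sequence of weak $(\Balpha,\delta,\mathbb{H}_i)$-flows $\mathcal{M}_{\mathbb{H}_i}$ with $H^i_{\mathrm{th}}\to\infty$, each failing to be a $\delta$-graph over $\mathcal{M}$ in $N_1$. The first step is to note that $N_1$ lies in the regular part of $\mathcal{M}$: by construction of the $(\alpha,\beta)$-neighbourhood, every boundary point of $\Omega_1$ is a regular point with $|H|=H_{\mathrm{bdd}}$, and $N_1$ is the union of the backward parabolic cylinders of radius $2\xi H_{\mathrm{bdd}}^{-1}$ around these points (Definition \ref{bddnbhd}). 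Provided $H_{\mathrm{bdd}}$ was taken sufficiently large in the construction of $\Omega_{(\alpha,\beta)}$, the set $\overline{N_1}$ is compact and disjoint from the singular set $\mathfrak{S}$ of $\mathcal{M}$.

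Having separated $N_1$ from $\mathfrak{S}$, I would choose an open neighbourhood $N$ of $\mathfrak{S}$ with $N\cap\overline{N_1}=\emptyset$ and apply Theorem \ref{surgcontrol} to obtain an $I$ such that, for all $i>I$, every surgery of $\mathcal{M}_{\mathbb{H}_i}$ takes place inside $N$; in particular, no surgery takes place in $\overline{N_1}$. Thus, restricted to $N_1$, each $\mathcal{M}_{\mathbb{H}_i}$ is, for large $i$, a genuine unit-regular, cyclic mod $2$ integral Brakke flow without any surgical modification. Proposition \ref{convergence2} then gives $\mathcal{M}_{\mathbb{H}_i}\to\mathcal{M}$ as Brakke flows on $\mathfrak{S}^c$, and in particular on a neighbourhood of $\overline{N_1}$.

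The conclusion comes from upgrading this Brakke convergence to smooth $C^\infty_{\mathrm{loc}}$ convergence on $N_1$. Since the limit $\mathcal{M}$ is smooth with unit Gaussian density at every point of $\overline{N_1}$, this upgrade is furnished by White's $\varepsilon$-regularity theorem applied to each $\mathcal{M}_{\mathbb{H}_i}$ for large $i$ (the absence of surgery in $\overline{N_1}$ means the standard regularity theory applies unimpeded), followed by the usual parabolic derivative estimates to control higher derivatives. Rescaling to the mean-curvature scale, smooth convergence forces the $C^{\lfloor 1/\delta\rfloor}$-distance of $\mathcal{M}_{\mathbb{H}_i}$ from $\mathcal{M}$ in $N_1$ to zero, contradicting the hypothesised failure of the $\delta$-graph condition. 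The main obstacle is precisely this upgrade step: one must be careful that the flows, although surgical or singular elsewhere, really do converge smoothly throughout $N_1$, which is why Theorem \ref{surgcontrol} plays an essential role in first excluding surgery from $\overline{N_1}$ for large $i$.
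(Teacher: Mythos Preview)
Your proposal is correct and follows essentially the same approach as the paper: the paper's proof simply invokes Proposition \ref{convergence2} together with White regularity, and you have unpacked precisely this mechanism (Brakke convergence on $\mathfrak{S}^c$, upgraded to smooth convergence via White's local regularity, hence eventual $\delta$-graphicality on the compact set $\overline{N_1}$). Your explicit appeal to Theorem \ref{surgcontrol} to exclude surgeries from $\overline{N_1}$ is already built into the proof of Proposition \ref{convergence2}, so the two arguments are in substance identical.
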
\begin{proof}
 This is a consequence of Proposition \ref{convergence2} and its corollaries. Recall, $N_1$ is the open neighbourhood of the boundary of $\Omega_1$ in which the flow $\mathcal{M}$ is smooth, locally $\alpha$-noncollapsed and $\beta$-uniformly 2-convex, as defined in Definition \ref{bddnbhd}. Since the boundary of $N_1$ is bounded away from the singular set, it is immediate from Proposition \ref{convergence2} and White regularity that, for sufficiently large $H_\mathrm{th}$, the claim holds.
 \end{proof}
 
 \begin{remark}
  It is important to compare this claim with the definition of surgery. We only permit the surgery procedure to be applied when the flow is graphically over $\mathcal{M}$ along the boundary. Thus, we see the obstruction to the flow continuing as a smooth surgery flow is not from our definitions, but from a point with $H(X)=\hneck$ that does not separate regions of curvature $\hthic$ and $\htrig$ or is not a $\delta$-neck. This is same obstruction as is dealt with in the case for 2-convex flows in \cite{hksurg}.
 \end{remark}
 \begin{claim}\label{surgtime}
  Fix $H_\mathrm{min}<\infty$ to satisfy claim 5.1. Then if $\hthic>H_\mathrm{min}$, we can directly apply the arguments of Haslhofer--Kleiner \cite{hksurg} to establish a $\Theta<\infty$ such that $\mathbb{H}>\Theta$ implies the weak $(\Balpha,\delta,\mathbb{H})$ surgery flow is a smooth mean curvature flow up to time $t=T_1-\varepsilon$. 
 \end{claim}
 \begin{proof}
Recall, the definition of an $(\alpha,\delta)$-Brakke flow only allowed surgery as long as the flow was smooth. Thus, since the singularities of the surgical flows can occur within $\varepsilon$ of any singular time, $T_1-\varepsilon$ is the best one can do without more information on the singular set.

By the first claim, $\mathcal{M}_{\mathbb{H}}\cap\partial \Omega_1$ is 2-convex and $\alpha$-noncollapsed for all $\mathbb{H}$ with $H_\mathrm{th}>H_\mathrm{min}$. After doing one surgical neck replacement, the maximum principle gives that the flow remains $2$-convex and $\alpha$-noncollapsed inside $\Omega_1$. The same argument holds across any number of neck replacements, so every surgical flow with $H_\mathrm{th}>H_\mathrm{min}$ is 2-convex and $\alpha$-noncollapsed inside $\Omega_1$.
 
  We now stipulate that the flow is stopped once $|H|=H_\mathrm{trig}$ is achieved inside $\Omega_1$. \cite[Theorem 1.21]{hksurg} and \cite[Theorem 1.22]{hksurg} can now be applied directly find the desired $\Theta<\infty$ which establishes the existence of a weak flow with surgery that is smooth inside $\Omega_1$ up to time $T_1-\varepsilon$. We note that Corollary \ref{bigA} prevents points of high curvature accumulating on the boundary of $\Omega_1$ along sequences of surgical flows. This is important for the proof of \cite[Theorem 1.22]{hksurg}.
 \end{proof}
 
 This completes the proof of the theorem.
\end{proof}
 
 \begin{remark} We stop only if $H_\mathrm{trig}$ is achieved in $\Omega_1$.
\end{remark}

\begin{remark}\label{andrewsdiscussion}
 One should note that Andrews' maximum principle proof of $\alpha$-noncollapsing for mean convex mean curvature flow, \cite{andrews}, makes use of a 2-point maximum principle for a function $Z(x,y,t)$. The positivity of $Z(x,y,t)$ is equivalent to being $\alpha$-noncollapsed. This argument can be suitably localised to the above situation by observing that along the boundary of $\Omega_1$, the flows will be close to one of the canonical flows (sphere, cylinder, bowl, and oval). Indeed, we know for points in the boundary the `touching points' of tangential spheres will be in our neighbourhood of the boundary, $N_1$. Since the interior mean curvature is larger than the boundary mean curvature, and surgery flows are Hausdorff close to the original flow, we see touching points of tangential spheres to interior points will be in $\Omega_1\cup N_1$. That is, one only needs to consider the function $Z(x,y,t)$ for points  $((\mathbf{x},t),(\mathbf{y},t)) \in\Omega_1 \times \{ \Omega_1\cup N_1\}$. This is similar to the argument presented in Theorem \ref{bddctrl}.
\end{remark}

 \begin{theorem}[Existence of a smooth flow with surgery]\label{existence} Let $\mathcal{M}$ be as above. Then, the parameters $H_\mathrm{min}(M)<\infty$ and $\Theta(M)<\infty$ can be chosen (depending only on the initial hypersurface) such that every weak $(\Balpha,\delta,\mathbb{H})$-flow, $\mathcal{M}_\mathbb{H}$, with $H_\mathrm{th}>H_\mathrm{min}$, $\mathbb{H}>\Theta$ satisfies:
 \begin{itemize}
     \item $|H|\leq H_\mathrm{trig}<\infty$ everywhere,  
     \item $\mathcal{M}_\mathbb{H}$ vanishes in finite time.
 \end{itemize}
 i.e.~ $\mathcal{M}_\mathbb{H}$ is a smooth mean curvature flow with surgery.
 \end{theorem}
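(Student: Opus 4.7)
The plan is to iterate Theorem \ref{firstsurg} across the finitely many connected components $\Omega_1, \ldots, \Omega_k$ of $\Omega_{(\alpha,\beta)}$ (finiteness follows from compactness of $\mathfrak{S}$). I would order the components so that their earliest singular times in $\mathcal{M}$ are nondecreasing (any two temporally overlapping components are spatially disjoint and can be processed in parallel). Set $T_0 := 0$, let $T_i$ be the first singular time of $\mathcal{M}$ strictly later than every singular point contained in $\Omega_1 \cup \cdots \cup \Omega_i$, and let $T_k$ be any time past the final singular time of $\mathcal{M}$ (which exists because $\mathcal{M}$ is bounded inside any large enough shrinking sphere). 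Pick $\varepsilon > 0$ with $T_{i-1} < T_i - \varepsilon$ for every $i$.

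By Proposition \ref{convergence2} together with Corollaries \ref{smoothpoints} and \ref{bigA}, a single sufficiently large $H_\mathrm{min}(M) < \infty$ suffices so that every weak $(\Balpha,\delta,\mathbb{H})$-flow with $\hthic > H_\mathrm{min}$ is $\delta$-graphical over $\mathcal{M}$ in all the boundary neighbourhoods $N_i$ simultaneously, and has all its singularities confined to $\Omega_{(\alpha,\beta)}$ within $\varepsilon$ in time of the singularities of $\mathcal{M}$. The inductive step mirrors the proof of Claim \ref{baddabing}: assuming the weak surgery flow is already a smooth mean curvature flow with surgery up to time $T_{i-1} - \varepsilon$, graphicality along $\partial \Omega_i$ propagates by the maximum principle to give $\alpha$-noncollapsedness and $\beta$-uniform 2-convexity throughout $\Omega_i$, so that the Haslhofer--Kleiner existence and canonical neighbourhood results \cite[Theorems 1.21, 1.22]{hksurg} furnish a ratio $\Theta_i < \infty$ such that $\mathbb{H} > \Theta_i$ forces smooth surgical continuation up to time $T_i - \varepsilon$. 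Setting $\Theta(M) := \max_i \Theta_i$ and iterating through $i = 1, \ldots, k$ then yields a smooth mean curvature flow with surgery on $[0, T_k)$, with $|H| < \htrig$ inside $\Omega_{(\alpha,\beta)}$ (by definition of surgery) and $|H|$ bounded below $\htrig$ elsewhere (by graphicality and the choice of $H_\mathrm{bdd}$).

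Finite-time extinction would then follow immediately: enclose $M$ inside some ball $B(\mathbf{0}, R)$; by the refined avoidance principle of Lemma \ref{barriers} (valid across surgeries as soon as $\hthic$ is large enough), the weak surgery flow is forced to remain inside the smooth shrinking sphere $\partial B(\mathbf{0}, \sqrt{R^2 - 2nt})$, which vanishes by $t = R^2/(2n)$. The main obstacle is the inductive step, specifically verifying that Proposition \ref{convergence2} continues to supply graphical control near each subsequent $\partial \Omega_i$ after earlier surgeries have altered the flow. This is where the global-in-time nature of Proposition \ref{convergence2} is essential: it produces convergence of the entire weak surgery flow on $\mathfrak{S}^c$, so a single large $H_\mathrm{min}$ controls graphicality at every $\partial \Omega_i$ at once, and the finiteness of $k$ then permits the uniform choice of $\Theta$.
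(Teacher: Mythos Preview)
Your argument is essentially the paper's own: induct over the finitely many components of $\Omega_{(\alpha,\beta)}$, use Proposition~\ref{convergence2} (plus White regularity) to secure graphical control in each $N_i$, then feed the resulting boundary $\alpha$-noncollapsedness and $\beta$-2-convexity into the Haslhofer--Kleiner machinery inside each component. The paper presents the induction slightly more cautiously, allowing $H_\mathrm{min}$ and $\Theta$ to be increased at each step (since allowing surgeries in $\Omega_2$ alters the flows seen in $\Omega_1$), whereas you observe that Proposition~\ref{convergence2} already covers all weak surgery flows at once and so a single global $H_\mathrm{min}$ suffices; both viewpoints are correct and reach the same conclusion via finiteness of the cover.

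The one genuine point of divergence is extinction. The paper notes that near the extinction time of $\mathcal{M}$ the curvature is everywhere large, so the entire flow eventually sits inside the \emph{final} component of $\Omega_{(\alpha,\beta)}$; there the surgery flow is literally a 2-convex Haslhofer--Kleiner surgery and terminates by their theory. Your sphere-barrier argument is also valid (the avoidance principle across surgeries from Lemma~\ref{barriers} applies equally well to a large enclosing sphere, which is far from any surgery region), but it requires you to take $T_k$ past $R^2/(2n)+\varepsilon$ and to check that the inductive smoothness statement really extends to all of $[0,T_k-\varepsilon)$, i.e.\ that nothing goes wrong between the last singular time of $\mathcal{M}$ and $T_k$. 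The paper's observation that the last component swallows the whole flow handles this more directly, and is worth incorporating.
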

 
 \begin{remark}
The weak surgery flows were unit-regular away from surgery, so sudden vanishing is not permitted. The second item is thus non-trivial.
 \end{remark}
 \begin{proof} $\Omega_{(\alpha,\beta)}$ has finitely many components, thus it is sufficient to argue inductively.

 We show that given Theorem \ref{firstsurg}, we have the respective statement for $\Omega_2$, the union of connected components of $\Omega_{(\alpha,\beta)}$ containing time $T_1$. Recall time $T_1$ was the first singular time that occurs outside $\Omega_1$. We will establish that for every $\varepsilon>0$ the parameters can be chosen such that there is a smooth flow with surgery up to time $T_2-\varepsilon$. Here, $T_2$ the first singular time outside of $\Omega_1\cup \Omega_2$ 
 
\begin{remark} The time interval over which $\Omega_2$ exists may overlap with that of $\Omega_1$. Surgeries in $\Omega_2$ can affect the surgeries that occur in $\Omega_1$, since mean curvature flow is parabolic. This is not an issue as the convergence results still hold. We may require a larger $H_\mathrm{min}$ and/or $\Theta$ for the same conclusion to hold.
\end{remark}
 
 Pick $H_\mathrm{min}$, $\Theta<\infty$ such that the conclusion of Theorem \ref{firstsurg} holds, and consider the boundary of $\Omega_2$. Once again, the logic of Proposition \ref{convergence2} controls the behaviour in a neighbourhood of the parabolic boundary, $N_2$. We may take $H_\mathrm{min}$ large enough that the flow is $\beta$-uniformly $2$-convex and $\alpha$-noncollapsed in $N_2$. Proceeding exactly as in claim \ref{surgtime}, we conclude the same result for $\Omega_1\cup\Omega_2$.

 This argument can be repeated for each connected component of $\Omega_{(\alpha,\beta)}$. Since there are only finitely many components, $H_\mathrm{min}$ and $\Theta$ stay bounded as they can only be changed a finite number of times. 
 
 Observe, the flow $\mathcal{M}$ will be entirely contained within the final connected component of $\Omega_{(\alpha,\beta)}$. Thus, there will be no singular times outside the final connected component, as there is no flow. The flow inside this final component will be a 2-convex surgery of \cite{hksurg}.
\end{proof}

We restate the canonical neighbourhood theorem of Haslhofer--Kleiner.
 
\begin{theorem}[Canonical Neighbourhood Theorem, Theorem 1.22 \cite{hksurg}]
For all $\varepsilon>0$, there exist $\overline{\delta}=\overline{\delta}(\Balpha)>0$, $H_{\mathrm{can}}(\varepsilon)=H_{\mathrm{can}}(\Balpha,\varepsilon)<\infty$ and $\Theta_\varepsilon(\delta)=\Theta_\varepsilon(\Balpha,\delta)<\infty$ ($\delta\leq\bar{\delta}$) with the following significance.
If $\delta\leq\overline{\delta}$ and $\mathcal{M}_\mathbb{H}$ is an $(\Balpha,\delta,\mathbb{H})$-flow with $\mathbb{H}\geq \Theta_\varepsilon(\delta)$,
then any $(\mathbf{p},t)\in \mathcal{M}_\mathbb{H}$ with $|H(\mathbf{p},t)|\geq H_{\mathrm{can}}(\varepsilon)$ is $\varepsilon$-close to either
(a) a $\beta$-uniformly $2$-convex ancient $\alpha$-noncollapsed flow,
or (b) the evolution of a standard cap preceded by the evolution of a round cylinder.
\end{theorem}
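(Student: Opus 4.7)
The plan is to reduce this to the original Haslhofer--Kleiner canonical neighbourhood theorem (the version for global 2-convex $(\alpha,\delta)$-flows) by localising to $\Omega_{(\alpha,\beta)}$, using that any point with curvature exceeding $H_\mathrm{can}$ must sit deep inside a connected component of $\Omega_{(\alpha,\beta)}$, where the flow genuinely behaves as a 2-convex, $\alpha$-noncollapsed $(\alpha,\delta)$-flow in the Haslhofer--Kleiner sense.

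First, I would appeal to Theorem \ref{existence} to ensure $\mathcal{M}_\mathbb{H}$ is a genuine smooth mean curvature flow with surgery (so $|H| < H_\mathrm{trig}$ everywhere and the flow is well defined until extinction). Next, I would prove that one may choose $H_\mathrm{can}(\varepsilon) \geq H_\mathrm{bdd}$ large enough so that any $(\mathbf{p},t)$ with $|H(\mathbf{p},t)| \geq H_\mathrm{can}(\varepsilon)$ lies at a definite parabolic distance from $\partial \Omega_{(\alpha,\beta)}$. This is a standard contradiction argument based on Proposition \ref{convergence2} and Corollary \ref{bigA}: if some sequence of high-curvature points in a sequence of surgery flows (with degenerating $\mathbb{H}_i$) were to escape towards $\partial \Omega_{(\alpha,\beta)}$, the convergence of $\mathcal{M}_{\mathbb{H}_i}$ to $\mathcal{M}$ away from $\mathfrak{S}$ would force a regular point of $\mathcal{M}$ to carry arbitrarily large curvature, contradicting the $\delta$-graphicality in $N_i$.

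Having localised, one observes that inside $\Omega_{(\alpha,\beta)}$ the weak surgery flow is $\beta$-uniformly 2-convex and $\alpha$-noncollapsed at regular points: along $\partial \Omega_{(\alpha,\beta)}$ this follows by the $\delta$-graphicality requirement in the definition of an $(\alpha,\delta)$-Brakke flow, and the tensor maximum principle together with Andrews' maximum principle for noncollapsing then propagates these properties through the interior, both forward in time and across every surgery (recall that neck replacement was arranged precisely so that the tensor and ball-comparison maximum principles apply). Restricted to (a slight neighbourhood of) a connected component $\Omega_i$, therefore, $\mathcal{M}_\mathbb{H}$ is an $(\alpha,\delta)$-flow in the sense of Definition \ref{alphadelta}, with controlled $\alpha$ and $\beta$, and one may directly invoke the original Haslhofer--Kleiner Canonical Neighbourhood Theorem \ref{hkcanon} to conclude that $(\mathbf{p},t)$ is $\varepsilon$-close to either a $\beta$-uniformly 2-convex ancient $\alpha$-noncollapsed flow, or the evolution of a standard cap preceded by a round cylinder. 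The parameters $H_\mathrm{can}(\varepsilon)$ and $\Theta_\varepsilon(\delta)$ are chosen as the maximum of the HK constants $H_\mathrm{can}^\mathrm{HK}(\Balpha,\varepsilon)$, $\Theta_\varepsilon^\mathrm{HK}(\Balpha,\delta)$ and the curvature thresholds needed in the first step.

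The main obstacle is the first step, where one must rule out high-curvature escape to $\partial\Omega_{(\alpha,\beta)}$. The subtlety is that the curvature estimate \ref{hkcurv} only applies on a parabolic ball once one already knows the relevant piece of $\mathcal{M}_\mathbb{H}$ is a genuine $(\alpha,\delta)$-flow of Haslhofer--Kleiner type on a slightly larger ball -- which is exactly what we are trying to establish near $\partial\Omega_{(\alpha,\beta)}$. The resolution, exactly as in the proof of Theorem \ref{surgcontrol}, is to couple the curvature estimate with the pseudolocality result of Proposition \ref{surgerypseudo}: near the boundary the surgery flow converges smoothly to $\mathcal{M}$ (whose curvature is bounded by $H_\mathrm{bdd}$ there), so pseudolocality forbids blow-up on a definite forward parabolic neighbourhood, and the HK curvature estimate then extends this control inward. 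Once this initial control step is secured, the rest of the argument is a clean application of the HK canonical neighbourhood theorem inside each connected component of $\Omega_{(\alpha,\beta)}$.
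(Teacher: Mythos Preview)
Your proposal is correct and follows the same approach as the paper, which dispatches the proof in a single sentence: ``The proof is identical, for we only do surgery in 2-convex connected components.'' You have simply unpacked what that sentence relies on --- namely, that the existence theorem and the maximum-principle arguments already force the restriction of $\mathcal{M}_\mathbb{H}$ to each $\Omega_i$ to be a genuine Haslhofer--Kleiner $(\alpha,\delta)$-flow with controlled boundary data, so that their canonical neighbourhood theorem applies verbatim; the ``high-curvature escape'' concern you raise is exactly what Corollary~\ref{bigA} (and the $\delta$-graphicality in $N_i$) already handles, and the paper takes this as read.
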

\begin{proof}
The proof is identical to that of Haslhofer--Kleiner \cite{hksurg}, for we only do surgery in 2-convex connected components.
\end{proof}
 The canonical neighbourhood theorem gives the following topological result concerning the dropped components. 
\begin{theorem}[Discarded components, \text{\cite[Corollary 1.25]{hksurg}}]\label{dropped}
For all $\varepsilon>0$ small enough, there are parameters $\Theta_\varepsilon(\delta)<\infty$, $H_\mathrm{can}(\varepsilon)$ such that any weak $(\Balpha,\delta,\mathbb{H})$ surgical flow with $\mathbb{H}>\Theta_\varepsilon(\delta)$, and $H_\mathrm{th}>H_\mathrm{can}(\varepsilon)$, has all discarded components are diffeomorphic to $\mathbb{S}^{n}$ or $\mathbb{S}^{n-1} \times \mathbb{S}^{1}$.
\end{theorem}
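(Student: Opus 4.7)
The plan is to reduce the statement to the corresponding result \cite[Corollary 1.25]{hksurg} for 2-convex $(\Balpha,\delta,\mathbb{H})$-flows, by exploiting that once inside the $(\alpha,\beta)$-neighbourhood $\Omega_{(\alpha,\beta)}$, a weak surgery flow is $\alpha$-noncollapsed and $\beta$-uniformly 2-convex, so that the Haslhofer--Kleiner arguments apply verbatim to its discarded components.

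First, I would recall from the definition of a weak $(\Balpha,\delta,\mathbb{H})$-flow that both surgery and discarding of components only take place inside $\Omega_{(\alpha,\beta)}$, and that a discarded component $C$ satisfies $|H|>H_\mathrm{th}$ pointwise. Choosing $H_\mathrm{th}>H_\mathrm{can}(\varepsilon)$ for the canonical neighbourhood constant of the restated theorem above, every point of $C$ admits a canonical neighbourhood that is $\varepsilon$-close to either a $\beta$-uniformly 2-convex ancient $\alpha$-noncollapsed flow, or to the evolution of a standard cap preceded by a round cylinder. Using the classification of such ancient solutions from \cite{chh18, chhw19}, their time slices are then either $\varepsilon$-necks or $\varepsilon$-caps.

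Next, I would run the neck/cap dissection of $C$ exactly as in \cite{hksurg}. Pick a maximal disjoint collection of strong $\varepsilon$-necks in $C$; cutting along their central cross-sections decomposes $C$ into finitely many pieces, each of which is either a chain of $\varepsilon$-necks or a neighbourhood of an $\varepsilon$-cap. A standard topological assembly argument then shows that if $C$ contains no caps, then $C$ is a closed loop of cylinders and hence $C \cong \mathbb{S}^{n-1}\times\mathbb{S}^{1}$; otherwise each cap is joined to another cap through a finite chain of cylinders, yielding $C\cong\mathbb{S}^{n}$.

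The main obstacle is ensuring that the maximum principle inputs underlying the $\alpha$-noncollapsed and $\beta$-uniform 2-convex estimates transfer from $\mathcal{M}$ to the surgery flow throughout $C$, not just along $\partial\Omega_{(\alpha,\beta)}$. This is precisely what Section 4 and Theorem \ref{existence} secure: surgery is only performed once the surgery flow is $\delta$-graphical over $\mathcal{M}$ on the boundary collar $N_i$, which gives uniform $\alpha$ and $\beta$ control on $\partial\Omega_i$, and the maximum principle then propagates these bounds into the interior of $\Omega_i$. With this in place, the Haslhofer--Kleiner dissection argument of \cite[Corollary 1.25]{hksurg} yields the claimed topology of discarded components.
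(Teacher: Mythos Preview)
Your proposal is correct and follows essentially the same route as the paper: the paper's proof simply observes that discarded components lie entirely inside $\Omega_{(\alpha,\beta)}$, where the flow is $\alpha$-noncollapsed and $\beta$-uniformly 2-convex, so that the Haslhofer--Kleiner argument from \cite[Corollary 1.25]{hksurg} applies verbatim via the canonical neighbourhood theorem. You have spelled out the neck/cap dissection and the maximum-principle justification in more detail than the paper does, but the underlying logic is identical.
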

\begin{remark}
 The parameters are derived from the canonical neighbourhood theorem.
\end{remark}
 \begin{proof}
 This follows from the canonical neighbourhood theorem \cite[Theorem 1.22]{hksurg}. The argument is identical to that in \cite{hksurg}, for components are only dropped if they are contained in $\Omega_{(\alpha,\beta)}$.
 \end{proof}
 We conclude with a result similar to that of Lauer and Head, \cite{lauer, head}. Note we also establish the stronger result that the convergence away from the singular set is smooth.
  \begin{theorem}
 Taking the limit as $H_\mathrm{th}\to \infty$, the weak $(\Balpha,\delta,\mathbb{H})$ surgical flows converge in the Hausdorff sense to the level set flow. Furthermore, away from the singular set of $\mathcal{M}$ the convergence is smooth.
 \end{theorem}
 \begin{proof}
 This is an immediate consequence of Proposition \ref{convergence2} and White regularity \cite{bw2005}.
\end{proof}
 
 \section{Applications of the Surgery}
 We now apply the above surgery formalism to prove a Schoenflies type  theorem for hypersurfaces of entropy less than $\lambda(\mathbb{S}^1\times \mathbb{R}^2)$, without having to manually construct the isotopies. Such a proof was conjectured in \cite[Conjecture 1.9]{ccms21}.
 The previous best bound on the entropy was $\lambda(\mathbb{S}^2\times \mathbb{R}^1)$ and was achieved independently by Bernstein--Wang \cite{bw20} and Chodosh--Choi--Mantoulidis--Schulze \cite{ccms20}.
 
 Recall the definition of entropy for a hypersurface from \cite{CM15}.
 \begin{defn} The Entropy of a hypersurface $\Sigma$ is
 \begin{align*}
     \lambda(\Sigma)=\sup_{x_0,t_0}\left(\frac{1}{4\pi t_0}\right)^{\frac{n}{2}}\int_\Sigma \exp{\left(-\frac{|x-x_0|^2}{4t_0}\right)}d\mu,
 \end{align*}
i.e.~ the supremum of the Gaussian densities over all scales and base-points. It can be considered a measure of the complexity of an embedding.
 \end{defn}
 
We first discuss the topological consequences of surgery. Recall, from Theorem \ref{discarded} we know discarded components will be diffeomorphic to $\mathbb{S}^n$ or $\mathbb{S}^{n-1}\times \mathbb{S}^1$. Moreover, we have the following 
 
 \begin{lemma}\label{isotopy}Let $\mathcal{M}_\mathbb{H}$ be a smooth mean curvature flow with surgery from the smooth initial condition $M$. Then,
 \begin{enumerate}[(i)]
     \item The flow $\mathcal{M}_{\mathbb{H}}$ is a smooth isotopy between times of surgery.
  \item Let $\tilde{M}$ be a connected component of the timeslice $M_t$, for any $t, \ 0<t<T_\mathrm{Ext}.$ The size of the fundamental group of $\tilde{M}$ satisfies $|\pi_1(\tilde{M})|\leq |\pi_1(M)|$.
    \end{enumerate}
 \end{lemma}
 
\begin{proof}
(i) It is immediate from the definition that smooth mean curvature flow is an isotopy. The flow $\mathcal{M}_{\mathbb{H}}$ is a smooth flow with surgery, and thus a mean curvature flow between times of surgery. This proves the first statement.

(ii) From part (i), we know that any topological changes that occur must happen at surgeries. It is sufficient to show the claim at the first surgery time, as at future surgery times we can treat each connected component present before surgery as a separate flow.

 Let $t$ be the first time of surgery. We denote the pre-surgery hypersurface by $M^-_t$ and post neck-replacement, but pre-component dropping, by $M^\#_t$. Note that it is possible for $M^\#_t$ to be disconnected. By item (i), we have $\pi_1(M^-_t)=\pi_1(M)$. We need only to consider connected components of $M^\#_t$ as clearly any component present at time between the first and second times of surgery must have evolved from some connected component of $M^\#_t$.
Thus, it is sufficient to show $|\pi_1(M^-_t)|\geq|\pi_1(\tilde{M})|$, where $\tilde{M}$ is a connected component of $M^\#_t$. This follows immediately by \cite[Proposition 3.23]{hssurg}, which shows $M^-_t$ is diffeomorphic to the connected sum (reversing the neck-replacement) of the connected components of $M^\#_t$. For completeness we prove our claim directly, by showing every non-trivial element of $\pi_1(\tilde{M})$ corresponds to a non-trivial element of $\pi_1(M^-_t)$. 

Let $P_i=(p_i,t), i\in\{1,2, \ldots, N\}, N<\infty$ be the centre of each $\delta$-neck that is about to be replaced by caps at time $t$. We know all modifications are made in $B=\cup^N_i B(p_i,5\Gamma \hneck^{-1})$ (see Definition \ref{necktocap} with $s=\hneck^{-1}$). 

Let $\gamma\in\pi_1(\tilde{M})$ be a non-trivial element. We can take this element to be represented by a curve $\tilde\gamma$ lying entirely in $\tilde{M}\backslash\{\tilde{M} \cap B\}$. This follows as each connected component of $\tilde{M} \cap B$ is diffeomorphic to our standard cap. Since the cap is simply connected, any portion of curve that enters a cap is homotopic to a curve on the boundary. Morally, we can consider this curve as detecting some topology unaffected by our surgery at time $t$.

Since $\tilde{\gamma}\cap B =\emptyset$, we can consider it as a curve in $M^-_t$, since $\tilde{M}\backslash\{\tilde{M} \cap B\} \subset M^-_t$. Clearly this curve cannot represent the trivial homotopy class as the connected sum operation cannot `remove topology'. Consequently, $|\pi_1(\tilde{M})|\leq|\pi_1(M)|)$.

\end{proof}

\begin{remark}
 It is of note that the surgery procedure detailed above can break handles in two ways. This is best illustrated by the following examples.
 \begin{enumerate}
     \item Consider the 2-convex embedding of the torus known as the `wedding band'. Deform it in a 2-convex manner such that one region is a much tighter neck than other regions. This flow will develop an inward neck pinch under mean curvature flow. If surgery is performed once, we are left with a `sausage', smoothly isotopic to a sphere.
     \item Consider a sphere with small holes drilled in around the poles, that has had the ends of a cylinder attached smoothly to each hole. This is a smooth embedding of the torus. This cylinder is a long thin neck which, heuristically, one expects would form an outward neck pinch under mean curvature flow. If one were to replace this neck by surgery, the resulting hypersurface is a sphere with the poles (smoothly) pushed in. This hypersurface is smoothly isotopic to a sphere.
 \end{enumerate}
\end{remark}
 
 \begin{theorem}[Low-entropy Schoenflies for $\mathbb{R}^4$]
  Let $\Sigma^3\subset\mathbb{R}^4$ be a hypersurface homeomorphic to $\mathbb{S}^3$ with entropy $\lambda(\Sigma)\leq\lambda(\mathbb{S}^1\times \mathbb{R}^2)$. Then $M$ is smoothly isotopic to the round $\mathbb{S}^3$.
 \end{theorem}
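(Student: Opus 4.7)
The plan is to combine the existence of generic low-entropy flows from \cite{ccms21} with the surgery machinery of Theorem~\ref{existence}, use simple-connectedness to rule out tori among the pieces produced by surgery, and then reverse-engineer a smooth isotopy to the round sphere.

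I would first invoke the generic low-entropy existence result of \cite{ccms21}: the bound $\lambda(\Sigma)\leq \lambda(\mathbb{S}^1\times \mathbb{R}^2)$ rules out cylindrical $\mathbb{S}^1\times \mathbb{R}^2$ shrinkers as generic singular models, so after an arbitrarily small smooth isotopy of $\Sigma$ one obtains a hypersurface $\Sigma'$ whose unit-regular cyclic (mod 2) Brakke flow $\mathcal{M}$ encounters only spherical and neck-pinch singularities. Since smooth isotopy to the round $\mathbb{S}^3$ is preserved under the passage $\Sigma\leadsto \Sigma'$, it suffices to prove the statement for $\Sigma'$. Theorem~\ref{existence} then furnishes, for suitable parameters $(\delta,\mathbb{H})$, a smooth mean curvature flow with surgery $\mathcal{M}_\mathbb{H}$ starting from $\Sigma'$ and vanishing in finite time.

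Next I would constrain the topology of every component appearing in $\mathcal{M}_\mathbb{H}$. Because $\Sigma$ is homeomorphic to $\mathbb{S}^3$, the hypersurface $\Sigma'$ is simply connected. By Lemma~\ref{isotopy}(ii) the cardinality of $\pi_1$ is non-increasing across each surgery, and smooth MCF preserves topology between surgeries, so every component at every time of $\mathcal{M}_\mathbb{H}$ is simply connected. Theorem~\ref{dropped} classifies each discarded component as diffeomorphic to $\mathbb{S}^3$ or $\mathbb{S}^2\times \mathbb{S}^1$; simple-connectedness rules out the latter, forcing every discarded component to be a $\mathbb{S}^3$. An induction through the surgery times, using Alexander's theorem in dimension three (any smoothly embedded $\mathbb{S}^2\subset \mathbb{S}^3$ bounds two smooth $3$-balls), then shows that every component appearing in $\mathcal{M}_\mathbb{H}$ at every time is diffeomorphic to $\mathbb{S}^3$.

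Finally, I would build an explicit smooth isotopy from $\Sigma'$ to a round $\mathbb{S}^3$ by reverse induction on the surgery times. Just before extinction every surviving component shrinks to a round point, so normalised MCF supplies a smooth isotopy of it to a round $\mathbb{S}^3$ and realises it as the boundary of a smooth closed $4$-ball in $\mathbb{R}^4$. Smooth MCF is an ambient smooth isotopy and so preserves the property ``bounds a smooth $4$-ball'' between surgeries. Reversing a neck replacement merges two smoothly bounded $4$-balls via a solid tube modelled on the standard cap (Definition~\ref{stdcap}), producing a smooth $4$-ball bounded by the merged $\mathbb{S}^3$; reversing a component drop simply adds back a disjoint smoothly bounded $\mathbb{S}^3$. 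Iterating back to time $0$ shows that $\Sigma'$ bounds a smooth $4$-ball in $\mathbb{R}^4$, and any smooth $3$-sphere in $\mathbb{R}^4$ which bounds a smooth $4$-ball is smoothly isotopic to the round $\mathbb{S}^3$ via a radial contraction inside the ball. The hard part will be the last step: verifying that reversing the neck replacement really produces a \emph{smooth} $4$-ball rather than merely a topological one. This is where the $\delta$-closeness of the surgery model to an exact cylinder and the explicit convex geometry of the standard cap are essential, and where outward necks (for which the surgery acts on the exterior of a component) require extra bookkeeping; without smoothness at this stage the argument would reduce to topological Schoenflies and would not deliver the smooth isotopy claim.
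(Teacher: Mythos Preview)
Your overall architecture matches the paper's proof: invoke \cite{ccms21} to perturb to a flow with only spherical and neck-pinch singularities, run Theorem~\ref{existence} to get a smooth surgery flow, use Lemma~\ref{isotopy}(ii) together with $\pi_1(\Sigma)=0$ to exclude $\mathbb{S}^2\times\mathbb{S}^1$ pieces via Theorem~\ref{dropped}, and then argue by backward induction on surgery times.

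The substantive divergence is in the final step. The paper does \emph{not} pass through the property ``bounds a smooth $4$-ball.'' Instead, at the last nonempty time the remaining components are $2$-convex $3$-spheres, and the paper invokes \cite{bhh} (Buzano--Haslhofer--Hershkovits) to obtain directly a smooth isotopy of each such component to a round $\mathbb{S}^3$. Going backward, reversing a neck replacement is precisely a connected sum of two of these standard spheres along a standard neck, which is again smoothly isotopic to a round $\mathbb{S}^3$; reinserting a discarded component just adds another standard sphere. This is purely an isotopy-of-hypersurfaces argument and completely sidesteps the issue you flag about whether the reversed surgery yields a \emph{smooth} $4$-ball.

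Two smaller points. First, in the surgery flow components do not ``shrink to a round point''; they are discarded once $|H|>H_{\mathrm{th}}$ everywhere, so your appeal to normalised MCF is not quite the right mechanism. Second, even if one did flow them further, the final pieces are only $2$-convex, not convex, so Huisken's convex convergence theorem is not directly available; this is exactly why the paper cites \cite{bhh}. Your forward use of Alexander's theorem to propagate ``each component is $\mathbb{S}^3$'' is fine but unnecessary once you run the backward induction the paper's way.
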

 \begin{proof}
 $\Sigma^3\subset\mathbb{R}^4$ be a hypersurface homeomorphic to $\mathbb{S}^3$ with entropy $\lambda(\Sigma)\leq\lambda(\mathbb{S}^1\times \mathbb{R}^2)$.
 By \cite{ccms21}, there is a small (isotopic) perturbation of $\Sigma$, $\hat\Sigma$, such that the unit-regular Brakke flow, $\mathcal{M}$, emerging from $\hat\Sigma$ is unique and encounters only spherical and neck-pinch singularities.
We find $\gamma>0$ such that $\max_{\mathbf{x}\in\hat\Sigma} \{|\mathbf{A}(\mathbf{x})|\}<\gamma$ and fix $\alpha, \beta$ and $ \delta>0$ as discussed in section 5. By Theorem \ref{existence}, the parameters $H_\mathrm{trig}$ and $\Theta$ can be chosen such that there is smooth $(\Balpha,\delta,\mathbb{H})$-flow with surgery $\mathcal{M}_{\mathbb{H}}$ that approximates the flow $\mathcal{M}$. In addition, we suppose $H_\textrm{th}$ and $\Theta$ are large enough that the conclusion of Theorem \ref{dropped} holds.

It remains to show that all the dropped components of $\mathcal{M}_{\mathbb{H}}$ are not tori and no handles are broken.

\begin{claim}
 The topological constraint that $\Sigma$ is homeomorphic to $\mathbb{S}^3$ rules out \begin{enumerate}[(a)]
     \item Dropped components being diffeomorphic to tori, $\mathbb{S}^2\times \mathbb{S}^1$.
     \item The breaking of a handle during surgery.
 \end{enumerate} 
\end{claim}\begin{proof}
We prove (a), (b) follows identically.
Suppose for contradiction that there is at least one dropped component that is a torus. Let $t$ be the first time a torus is dropped in surgery. It is clear that some component of the pre-surgery hypersurface $M_{t^-}$ would have a non-trivial fundamental group (i.e. size greater than 1).
 By Lemma \ref{isotopy}, the initial condition $\hat\Sigma$ must also have had non-trivial fundamental group. This is a contradiction to $\Sigma$ being homeomorphic to $\mathbb{S}^3$.
 \end{proof}
Thus, all dropped components are isotopic to spheres and no handles are broken.

We now use backward induction to deduce $\hat\Sigma$ is smoothly isotopic to the round $\mathbb{S}^3$. There are finitely many surgeries, thus, there is a finite set of times $t_1<\ldots<t_n$ when the flow is stopped.

Observe, at $t^-_n$, the final non-empty time slice of $\mathcal{M}_{\mathbb{H}}$, we have a collection of 2-convex components diffeomorphic to spheres. Each connected component is smoothly isotopic to a sphere. (Such an isotopy can be found in \cite{bhh}.) Following the flow back to the $(n-1)^\mathrm{th}$ surgery, item (i) of Lemma \ref{isotopy} shows each connected component of the $t^+_{n-1}$ time slice is smoothly isotopic to spheres. Reversing the surgery, the $t^-_{n-1}$ time-slice is obtained by connecting the components present in $t^\#_{n-1}$) with smooth necks. Explicitly, we have the connected components present in $t^+_{n-1}$ and a collection of dropped components.

Claim 6.1 shows that these dropped components are diffeomorphic to spheres. No handles will be introduced when we reverse the surgery. Thus, the reversing of the surgery is a connected sum of spheres. In particular, $t^-_{n-1}$ is smoothly isotopic to some sub-collection of the connected components, and thus isotopic to a collection of round $\mathbb{S}^3$.

By reverse induction, this is true for the initial time-slice. Since there is only one connected component, the hypersurface $\hat\Sigma$ is smoothly isotopic to the round $\mathbb{S}^3$.
\end{proof}
\appendix
\section{Graphicality and Pseudolocality}

\begin{theorem}[Interior estimates for Graphs \cite{EHest}]\label{ehest}
Let $M^n\subset\mathbb{R}^{n+1}$ be a smooth hypersurface.
Let $R>0$ be such that $M_t$ can be written as a graph over $B^n_R$, an $n$-ball of radius $R$ in some hyperplane, for $t\in[0,T]$.
Suppose further that the gradient is bounded, i.e.~ we denote the graph function by $u$ and for each $t\in[0,T]$ we have
\begin{align*}
    \sqrt{1+|Du_t|^2}\leq 1+\eta
\end{align*}
Where $\eta>0$ depends only on the dimension. Then, for any $t\in[0,T]$ and $\theta \in (0,1)$, we have
\begin{align*}
    \sup_{B_{\theta R (y_0)}\times [0,T]}|A|^2 \leq C(n,\theta,R)\sup_{B_{ R} (y_0)\times \{0\}}|A|^2
\end{align*}
\end{theorem}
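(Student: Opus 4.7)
This is the classical Ecker--Huisken interior estimate, and the plan is to derive a differential inequality for the quantity $|A|^2 v^{2p}\varphi^2$ along the flow, where $v=\sqrt{1+|Du|^2}$ is the gradient function on the graph, $p$ is a small positive constant, and $\varphi$ is a spatial cutoff, and then apply the parabolic maximum principle.

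First, I would record the evolution equations on the graph. Writing the flow as the graph of $u(\cdot,t)$ over $B_R \subset \mathbb{R}^n$ and working in ambient coordinates, one has
\begin{align*}
  (\partial_t - \Delta)|A|^2 &= -2|\nabla A|^2 + 2|A|^4,\\
  (\partial_t - \Delta)v &= -|A|^2 v - 2 v^{-1}|\nabla v|^2,
\end{align*}
where $\Delta$ is the Laplace--Beltrami operator on $M_t$. The hypothesis $v\leq 1+\eta$ makes $v$ effectively bounded from above and below by constants close to $1$, which is crucial for using powers of $v$ as a weight without blowing up.

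Next, I would introduce a cutoff function $\varphi(y,t)$ that depends only on the height $y$ in the base $\mathbb{R}^n$, chosen so that $\varphi\equiv 1$ on $B_{\theta R}$, $\varphi\equiv 0$ outside $B_R$, and $|D\varphi|, |D^2\varphi| \leq C(n,\theta)/R, C(n,\theta)/R^2$. Pulled back to the graph, $\varphi$ enjoys good bounds because $v$ is close to $1$; in particular $|\nabla \varphi|\leq C/R$ and $(\partial_t - \Delta)\varphi^2$ is bounded by $C(n,\theta)/R^2$ as long as $\eta$ is small enough. The standard Ecker--Huisken trick is then to consider
\begin{align*}
  f = \varphi^2 \, v^{2p}\, |A|^2
\end{align*}
for a small $p=p(n)>0$, compute $(\partial_t - \Delta) f$ using the Kato inequality and the Cauchy--Schwarz inequality to absorb the dangerous $|\nabla A|^2$ terms, and observe that the $|A|^4$ term coming from the evolution of $|A|^2$ is cancelled (up to a small multiple) by the $-|A|^2 v$ term coming from $v^{2p}$, provided $p$ and $\eta$ are sufficiently small relative to the dimension.

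The conclusion then follows from the parabolic maximum principle applied to $f$ on $\overline{B}_R \times [0,T]$: the spatial boundary term vanishes because $\varphi$ vanishes there, the time $0$ slice gives a contribution controlled by $\sup_{B_R \times \{0\}} |A|^2$, and any interior space-time maximum of $f$ yields, via the differential inequality, a bound of the form $f \leq C(n,\theta) R^{-2} + C(n) \sup_{B_R \times \{0\}} |A|^2$. Evaluating at points of $B_{\theta R}$ where $\varphi = 1$ and using $v \sim 1$ to strip the weight off yields the stated estimate. The main technical obstacle is the algebraic juggling needed to make the $-|A|^2 v^{2p+1}$ term strictly dominate the $+|A|^4 v^{2p}$ term after Kato and the cutoff errors are absorbed; this is exactly where the smallness of $\eta$ (depending on $n$) is used, and where the constant $C(n,\theta,R)$ ultimately acquires its dependence on $R$ (through the cutoff derivatives $\sim R^{-2}$).
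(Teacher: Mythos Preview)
The paper does not actually prove this statement; it simply records that the estimate ``is immediate from Theorem~3.1 of \cite{EHest} under the assumption of bounded initial curvature'' and points to the higher-codimension analogue. Your sketch goes further and outlines the Ecker--Huisken maximum-principle computation, which is the right idea.

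There is, however, a genuine slip in your cancellation mechanism. Computing the heat operator on $|A|^2 v^{2p}$, the $|A|^4$ contribution from $(\partial_t-\Delta)|A|^2$ is $+2|A|^4 v^{2p}$, while the contribution generated by the $-|A|^2 v$ term in $(\partial_t-\Delta)v$ is $-2p\,|A|^4 v^{2p}$; the net coefficient is $2(1-p)$, which is \emph{positive} for small $p$, so no cancellation occurs in that regime. The actual Ecker--Huisken argument takes $p=1$ (i.e.\ works with $g=|A|^2 v^2$) and uses the refined Kato-type inequality coming from Codazzi, $|\nabla A|^2 \geq \big(1+\tfrac{2}{n}\big)\big|\nabla|A|\big|^2$, to produce, after handling the cross terms, a good negative $-c(n)\,v^{-2}g^2$ on the right-hand side. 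It is this quadratic absorption, not a small-$p$ cancellation, that drives the estimate; the localisation via $\varphi$ then yields an $O(R^{-2})$ error exactly as you describe. With that correction your outline matches the classical proof the paper is citing.
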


This is immediate from the Theorem 3.1 of \cite{EHest} under the assumption of bounded initial curvature. See also \cite{begleymoore}, where the estimates are established for high co-dimension.

We also state the pseudolocality result of Ilmanen--Neves--Schulze, in the co-dimension 1 case. We also don't require bounded area ratios as we only care about the local case, hence can rely on the local Monotonicity formula. See also the pseudolocality result stated in \cite{ChenYin}.

Let $x\in\mathbb{R}^{n+1}, x=(\hat x,\tilde x)$.
We define the cylinder $\mathcal{C}_r(x_0)\subset \mathbb{R}^{n+1}$  by \begin{align*}
    \mathcal{C}_r(x)=\{x\in\mathbb{R}^{n+1} \mathrm{s.t.} |\hat x-\hat x_0|<r, |\tilde{x}-\tilde{x}_0|<r\}
\end{align*}
\begin{theorem}[Pseudolocality \cite{pseudo}]\label{pseudo}
Let $\{M_t\}_{t\in[0, T)}$ be a smooth mean curvature flow of embedded hypersurfaces in $\mathbb{R}^{n+1}$. Then, for any $\eta>0$ there exists $\varepsilon,\vartheta>0$ depending only on $n,\eta$ such that if $x_0\in M_0$ and $M_0\cap \mathcal{C}_1(x_0)$ can be written as $\mathrm{graph}(u)$, where $u: B^n(x_0)\to\mathbb{R}$ with Lipschitz constant less than $\varepsilon$, then
\begin{align*}
    M_t\cap \mathcal{C}_{\vartheta}(x_0), t\in[0,\vartheta^2)\cap[0,T)
\end{align*}
is a graph over $B^n_{\vartheta}(x_0)$ with Lipschitz constant less than $\eta$ and height bounded by $\eta\vartheta$.
\end{theorem}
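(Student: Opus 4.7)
The plan is a contradiction--compactness argument driven by the local monotonicity formula and White's local regularity theorem. Fix $\eta>0$ and suppose the conclusion fails: for every pair $(\varepsilon_i,\tilde\delta_i)\to(0,0)$ there is a smooth mean curvature flow $\{M^i_t\}_{t\in[0,T_i)}$ and a point $x_i\in M^i_0$ such that $M^i_0\cap C_1(x_i)=\mathrm{graph}(u^i)$ with Lipschitz constant below $\varepsilon_i$, yet at some $t_i\in[0,\tilde\delta_i^2)$ the slice $M^i_{t_i}\cap C_{\tilde\delta_i}(x_i)$ either fails to be a graph over $B^n_{\tilde\delta_i}$, or is a graph whose Lipschitz constant or normalised height violates the bound by $\eta$. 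Translating $x_i$ to the origin and rotating so the tangent hyperplane at $x_i$ is horizontal, I reduce to studying how far an essentially flat initial condition can control the flow forward in time.

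The key observation is that the graphical hypothesis forces all initial Gaussian density ratios to be near one. Concretely, because $u^i$ has Lipschitz constant less than $\varepsilon_i$ on $B^n_1(0)$, the surface $M^i_0$ lies in a thin slab about a horizontal hyperplane inside $C_1(0)$, and a direct calculation with the Gaussian weight gives $\Theta_{M^i_0}(y,r)\leq 1+\omega(\varepsilon_i)$ for all $y\in C_{1/2}(0)$ and $r\leq 1/2$, with $\omega(\varepsilon_i)\to 0$. Applying Ecker's localised monotonicity formula with a smooth cutoff supported inside $C_1(0)$ and identically one on $C_{3/4}(0)$, one propagates this bound forward in time: for any $(y,t)$ with $y\in M^i_t\cap C_{\tilde\delta_i}(0)$, $t\in[0,\tilde\delta_i^2)$, and any admissible scale $r$, the density satisfies $\Theta_{M^i_t}(y,r)\leq 1+\omega(\varepsilon_i)+C(n)\tilde\delta_i^2$, where the last term bounds the cutoff defect accumulated over a time interval of length at most $\tilde\delta_i^2$. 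Choosing $\tilde\delta_i$ small and $i$ large therefore keeps all relevant space-time densities strictly below the White regularity threshold $1+\varepsilon_{\mathrm{White}}$.

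With this density bound in hand, White's local regularity theorem \cite{bw2005} supplies uniform $C^k$ estimates for $M^i_t$ in $C_{\tilde\delta_i/2}(0)\times[0,\tilde\delta_i^2/4)$. Rescaling (or equivalently fixing a common $\tilde\delta$ once and for all before sending $\varepsilon_i\to 0$), one extracts a subsequential smooth limit $M^\infty_t$ on a parabolic neighbourhood of $(0,0)$ whose time-zero slice, being the $C^1$-limit of the $\varepsilon_i$-Lipschitz graphs $u^i$, is the horizontal hyperplane. Uniqueness of smooth mean curvature flow from smooth data forces $M^\infty_t$ to be the static horizontal plane, which is a zero-Lipschitz, zero-height graph over $B^n_{\tilde\delta}$ for every admissible $t$. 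Smooth convergence $M^i_t\to M^\infty_t$ then makes $M^i_t\cap C_{\tilde\delta}(0)$ a graph with Lipschitz constant and height well under $\eta$ and $\eta\tilde\delta$ respectively for all large $i$, contradicting the choice of $(t_i,\tilde\delta_i)$ and proving the theorem.

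The main technical obstacle is balancing the two small scales $\varepsilon$ and $\tilde\delta$ in the monotonicity step. The cutoff in Ecker's local formula must be identically one on a region large enough to cover every space-time point in $C_{\tilde\delta}(0)\times[0,\tilde\delta^2)$ at the density scales one wishes to control, yet compactly supported inside $C_1(0)$ so that no tangential boundary flux of the weighted area appears; the accumulated cutoff error $C(n)\tilde\delta^2$ must then be dominated by $\varepsilon_{\mathrm{White}}$, which fixes $\tilde\delta$ first and only then constrains $\varepsilon$ through the initial density estimate $\omega(\varepsilon)<\tfrac{1}{2}\varepsilon_{\mathrm{White}}$. A secondary subtlety is obtaining smooth convergence of the limit flow up to $t=0$, which uses that $\varepsilon_i\to 0$ gives $C^1$ convergence of the initial graphs and not merely Hausdorff convergence of supports; boundary regularity for the limiting smooth flow then follows from standard parabolic Schauder estimates.
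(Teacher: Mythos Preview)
The paper does not prove this theorem; it is stated in the appendix as a cited result from Ilmanen--Neves--Schulze \cite{pseudo} (see also Chen--Yin \cite{ChenYin}), with only the remark that in the local codimension-one setting one may appeal to the local monotonicity formula rather than a global area-ratio bound. There is therefore no in-paper argument to compare against.

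Your outline is the standard route to pseudolocality and is essentially correct: the Lipschitz hypothesis forces initial Gaussian density ratios close to one, Ecker's localised monotonicity formula propagates this forward with a controllable cutoff defect, White regularity converts the density bound into uniform curvature estimates, and a compactness argument together with uniqueness of smooth flow from a hyperplane yields the contradiction. One organisational point worth tightening: your opening contradiction setup sends both $\varepsilon_i$ and $\tilde\delta_i$ to zero simultaneously, but as you correctly observe in your final paragraph, $\tilde\delta$ should be fixed first---small enough that the accumulated cutoff error $C(n)\tilde\delta^2$ in the local monotonicity formula is below $\tfrac{1}{2}\varepsilon_{\mathrm{White}}$---and only then should $\varepsilon_i\to 0$. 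Phrasing the contradiction hypothesis as ``for this fixed $\tilde\delta$, no $\varepsilon>0$ suffices'' removes the ambiguity in the rescaling step and makes the two-scale balancing transparent from the outset.
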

\begin{remark}
As is remarked in \cite[Remarks 1.6]{pseudo}, the above statement holds with only the presumption that $\{M_t\}_{t\in[0, T)}$ is a unit-regular integral Brakke flow. The proof requires the use of Brakke's local regularity theorem, \cite{Brakke} in place of White's local regularity, \cite{bw2005}.
\end{remark}

\begin{theorem}\label{bddctrl}
Let $\mathcal{M}'$ be a $(\alpha,\delta)$-Brakke flow. Suppose $X=(\textbf{x},t_x)\in \mathcal{M}\cap\Omega_{(\alpha,\delta)}$ with $t_x\leq t_F$, where $t_F$ is the final time surgeries are performed. Then, $\mathcal{M}\cap P(X,\xi |H(X)|^{-1})$ is a smooth $(\alpha,\delta)$-flow in the sense of Haslhofer--Kleiner.
\end{theorem}
\begin{proof}
Note, we do not need to check $Y \in P(X,\xi |H(X)|^{-1})\cap \Omega_{(\alpha,\beta)}$, by our strict definitions of how and when surgery is performed. Since no surgeries occur outside of $\Omega_{(\alpha,\delta)}$ it is sufficient to check the flow is $\beta$-uniformly 2-convex and $\alpha$-noncollapsed.

Suppose $X\in\Omega_{(\alpha,\beta)}$ and $Y=(\mathbf{y},t_y)\in P(X,\xi |H(X)|^{-1})\cap \Omega_{(\alpha,\beta)}^c\neq \emptyset$.  From the definition of a backwards parabolic cylinder, we have that $\mathbf{y}\in B(\bf{x},\xi|H(X)|^{-1} )$. Let $L$ be the line segment joining $\mathbf{x}$ to $\mathbf{y}$ in the timeslice $\mathbb{R}^{n+1}\times\{t_y\}$. This line segment must pass through $\partial \Omega_{(\alpha,\delta)}$. Let $Z=(\mathbf{z},t_y)$ denote the point on $L$ intersecting $\partial \Omega_{(\alpha,\delta)}$. Clearly we have $|\mathbf{z}-\mathbf{y}|<|\mathbf{x}-\mathbf{y}|\leq \xi |H(X)|^{-1}$. By the maximum principle, we have $|H(Z)|\leq|H(X)|$, and so $Y\in P(Z, \xi |H(Z)|^{-1})$). By the assumption $t_x\leq t_f$, we know that at $t=t_y$, the flow $\mathcal{M}'$ remains $\delta$-graphical over $\mathcal{M}$ in the neighbourhood of the boundary $N$. By the definition of $N$, Definition \ref{bddnbhd}, we have $P(Z, \xi |H(Z)|^{-1})\subset N$. In particular, by our choice of $\delta$, at the point $Y\in\mathcal{M}'$, the flow is $\beta$-uniformly 2-convex and $\alpha$-noncollapsed. 
\end{proof}

 \bibliographystyle{alpha}
 \bibliography{surgery}
\end{document}